%%%% 2013-0216
\documentclass[a4paper, 12pt]{amsart}

\usepackage{graphicx} 
\usepackage{amssymb, amsmath, amsthm}
\usepackage{amscd}

\theoremstyle{plain}
\newtheorem{theorem}{Theorem}[section]
\newtheorem{lemma}[theorem]{Lemma}
\newtheorem{corollary}[theorem]{Corollary}
\newtheorem{definition}[theorem]{Definition}
\newtheorem{proposition}[theorem]{Proposition}

\newtheorem{remark}[theorem]{Remark}

\makeatletter

 \@addtoreset{equation}{section}
\makeatother

\newcommand{\sn}[0]{\mathrm{sn}}
\newcommand{\cs}[0]{\mathrm{cs}}

\newcommand{\diam}[0]{\mathrm{diam}}
\newcommand{\Lip}[0]{\mathrm{Lip}}
\newcommand{\mass}[0]{\mathrm{mass}}

\newcommand{\supp}[0]{\mathrm{supp}}
\newcommand{\cd}[0]{\mathrm{CD}}
\newcommand{\loc}[0]{\mathrm{loc}}
\newcommand{\mcp}[0]{\mathrm{MCP}}
\newcommand{\bg}[0]{\mathrm{BG}}

\title
[LLC condition for Alexandrov spaces]
{Locally Lipschitz contractibility of Alexandrov spaces and its applications}
\author{Ayato Mitsuishi and Takao Yamaguchi}
\address
{Mathematical Institute, Tohoku University, Sendai 980-8578, JAPAN}
\address
{Institute of Mathematics, University of Tsukuba, Tsukuba 305-8571, JAPAN}
\email[A.~Mitsuishi]{mitsuishi@math.tohoku.ac.jp}
\email[T.~Yamaguchi]{takao@math.tsukuba.ac.jp}
\date{\today}

\begin{document}
\maketitle

\begin{abstract}
We prove that any finite dimensional Alexandrov space with a lower curvature bound is locally Lipschitz contractible. 
As applications, we obtain a sufficient condition for solving the Plateau problem in an Alexandrov space considered by Mese and Zulkowski. 
%%%, and obtain an inequality between Gromov's simplicial volume and Hausdorff measure of an Alexandrov space. 
%%We estimate the simplicial volume of an Alexandrov space having a lower Ricci curvature bound in the sense of Bacher and Sturm.
\end{abstract}

\section{Introduction}
Alexandrov spaces are naturally appeared in the collapsing and convergence theory of Riemannian manifolds and played important roles in Riemannian geometry.
In the paper, when we say simply an Alexandrov space, it means that an Alexandrov space {\it of curvature bounded from below locally} and {\it of finite dimension}.
Their fundamental properties of such spaces were well studied in \cite{BGP}. 
There is a remarkable study of topological structures for Alexandrov spaces by Perelman \cite{Per Alex2}.
There, the topological stability theorem was proved which states that, if two compact Alexandrov spaces of the same dimension are very close in the Gromov-Hausdorff topology, then they are homeomorphic to each other. 
Further, it implies that for any point in an Alexandrov space, its small open ball is homeomorphic to its tangent cone. 
In particular, an open ball of small radius with respect to its center is contractible. 
It is expected by geometrers that the corresponding statements replacing homeomorphic by bi-Lipschitz homeomorphic could be proved. 
Until now, we did not know any Lipschitz structure of an Alexandrov space around singular points.
A main purpose of this paper is to prove that any finite dimensional Alexandrov space with a lower curvature bound is {\it strongly locally Lipschitz contractible} in the sense defined later.
For short, SLLC denotes this property.
The SLLC-condition is a strong version of the LLC-condition introduced in \cite{Y simp}
(cf.\, Remark \ref{rem:LLC}).
%%%The LLC-condition was introduced in \cite{Y simp}. 
%%%The second author showed some results by assuming the LLC-condition.

% The LLC-condition was introduced in \cite{Y simp}, where the simplicial volume and related quantities of metric spaces of curvature bounded form below or above were studied.
% He proved certain inequalities and equalities between those geometric and topological quantities by assuming the LLC-condition.
% By the definition, metric spaces of curvature bounded from above %%, which is recently almost called CAT-spaces, 
% satisfy the LLC-condition.  %%''«'¼'µ 
% However, it was not known whether Alexandrov spaces are LLC.

%%%% We recall the definition of the LLC-condition.
We define the strongly locally Lipschitz contractibility. 
%%For a point $p$ in a metric space $X$, we denote by $U(p,r)$ an open ball at $p$ of radius $r$.
We denote by $U(p,r)$ an open ball centered at $p$ of radius $r$ in a metric space. 

% $B(p,r)$ and $U(p,r)$ a closed ball and an open ball centered at $p$ of radius $r$, respectively: 
% \[
% B(p,r) = \{ q \in X \,|\, d(p, q) \le r \} \text{ and }
% U(p,r) = \{ q \in X \,|\, d(p, q) < r \}.
% \]
\begin{definition} \upshape \label{def of SLLC}
A metric space $X$ is {\it strongly locally Lipschitz contractible}, for short SLLC, if for every point $p \in X$, there exists $r > 0$ and a map 
\[
h : U(p, r) \times [0,1] \to U(p,r)
\]
such that $h$ is homotopy from $h(\cdot,0) = id_{U(p,r)}$ to $h(\cdot, 1) = p$, and it is Lipschitz, i.e., there exists $C, C' > 0$ such that 
\[
d(h(x,s), h(y,t)) \le C d(x,y) + C' |s -t|
\]
for every $x, y \in U(p,r)$ and $s, t \in [0,1]$, and for every $r' < r$, the image of $h$ restricted to $U(p,r') \times [0,1]$ is $U(p, r')$. 
\end{definition}
We call such a ball $U(p,r)$ a Lipschitz contractible ball and $h$ a Lipschitz contraction on $U(p,r)$.

%%%% The original definition is weaker than the LLC in Definition \ref{def of LLC}.
%%%% Alexandrov spaces of curvature bounded above (i.e.\! CAT-spaces) satisfy the LLC-condition.

%%%Since any Alexandrov space is locally cone-like space (\cite{Per Morse}, \cite{Per Alex2}), it is locally (topologically) contractible.
%%It was not known whether Alexandrov spaces are LLC.
%%It was just known that Alexandrov spaces are locally cone-like spaces via Perelman's Morse theory (and Stability theorem). 
A main result in the present paper is the following. 

\begin{theorem} \label{Alex sp is SLLC}
%%Let $X$ be a finite dimensional Alexandrov space and $p \in X$. 
%%Then, there exist a positive number $r$ and a Lipschitz map $\Phi : U(p, r) \times [0,1] \to U(p,r)$ such that, for any $x \in U(p,r)$, $\Phi(x,0) = x$ and $\Phi(x,1) = p$.
%%Further, the restriction $\Phi$ on $U(p, r') \times [0,1]$ for $r' < r$ has the image $U(p, r')$.
%%In particular, $X$ is LLC.
Any finite dimensional Alexandrov space is strongly locally Lipschitz contractible.
\end{theorem}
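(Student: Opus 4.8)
The plan is to contract a small ball $U(p,r)$ onto its center $p$ by the gradient flow of $-\dist(p,\cdot)^2$, the point being that this function becomes \emph{uniformly convex} near $p$. Thus the core of the proof is the following local estimate, which I would isolate as a lemma: for every $p\in X$ there exist $r_0>0$ and $\lambda>0$ such that $\dist(p,\cdot)^2$ is $\lambda$-strongly convex on $U(p,r_0)$, meaning that $t\mapsto\dist(p,\gamma(t))^2-\tfrac\lambda2 t^2$ is convex along every unit-speed geodesic $\gamma$ contained in $U(p,r_0)$.

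I would first settle the model situation. On a Euclidean cone $C(\Sigma)$ over a geodesic space $\Sigma$ of diameter $\le\pi$ the function $\dist(o,\cdot)^2$, with $o$ the apex, is $2$-convex on all of $C(\Sigma)$. Indeed, by the cosine law for cone distances any two points of $C(\Sigma)$ lie in a flat sector of angle $\le\pi$ through $o$ that contains the minimal geodesic between them, and along a straight segment of such a sector $\dist(o,\cdot)^2$ equals (arclength)$^2$ plus a constant, hence has second derivative $2$; the case where the geodesic runs through $o$ is checked directly.

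The local estimate is then proved by a blow-up argument. Were it to fail at $p$, one would get, for scales $\lambda_i\to0$, a geodesic in $U(p,\lambda_i)$ violating $1$-convexity of $\dist(p,\cdot)^2$; rescaling $X$ by $\lambda_i^{-1}$ so that the offending segment has unit length, basing at one of its endpoints $a_i$, and passing to a pointed Gromov--Hausdorff limit, one lands in one of two situations. If $\dist(p,a_i)$ stays comparable to $\lambda_i$, the limit is the tangent cone $T_p=C(\Sigma_p)$, the violated midpoint inequality survives in the limit, and it contradicts the $2$-convexity of $\dist(o,\cdot)^2$ on $T_p$. If $\dist(p,a_i)\gg\lambda_i$, the limit is a product $\mathbb R\times C(\Xi)$ --- a blow-up of $T_p$ at a point escaping to infinity, which splits off a line thanks to the conical structure and the fact that geodesics extend in a space without boundary --- based at $(0,o)$; here the point $p$ has gone to infinity, but rewriting $\dist(p,\cdot)^2$ modulo the Busemann function of the splitting line removes its linear-in-the-large part and one is left with the failure of strong convexity of $t^2+\dist(o,\cdot)^2$ on $\mathbb R\times C(\Xi)$, again contradicting the model case applied to the cone factor. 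Making this degenerate case rigorous --- the linearization, the appeal to the splitting theorem, and the treatment of points near $\partial X$ --- is, I expect, the main obstacle; the rest is routine once the compactness framework is set up.

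Granting the local estimate, $g:=-\dist(p,\cdot)^2$ is $\lambda$-strongly concave on $U(p,r_0)$, so by Petrunin's gradient-flow calculus for semiconcave functions its gradient flow $\Phi^t$, $t\ge0$, is defined, is $1$-Lipschitz in the initial point, obeys $\frac{d}{dt}g(\Phi^tx)=|\nabla g|^2(\Phi^tx)$, and satisfies the length bound $\dist(\Phi^ax,\Phi^bx)\le\int_a^b|\nabla g|(\Phi^cx)\,dc$. Since $g$ increases along $\Phi^t$, $\dist(p,\Phi^tx)$ is nonincreasing, so the flow stays in $U(p,r_0)$ and exists for all $t$; the {\L}ojasiewicz-type inequality for a strongly concave function with maximum at $p$ (value $0$) gives $|\nabla g|^2\ge2\lambda\,\dist(p,\cdot)^2$, hence $\dist(p,\Phi^tx)\le e^{-\lambda t}\dist(p,x)$, so $\Phi^tx\to p$, while $|\nabla g|\le2\,\dist(p,\cdot)$ together with this decay makes the flow curves of finite length. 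I would then fix any $r<r_0$ and put $h(x,t)=\Phi^{t/(1-t)}(x)$ for $t<1$, $h(x,1)=p$, on $U(p,r)$. Then $h(\cdot,0)=\mathrm{id}$, $h(\cdot,1)=p$; the exponential decay makes $h$ (uniformly) continuous up to $t=1$; the $1$-Lipschitz bound for $\Phi^t$ gives $d(h(x,t),h(y,t))\le d(x,y)$; and for $s<t<1$ the length bound together with $|\nabla g|(\Phi^cx)\le2e^{-\lambda c}r$ yields $d(h(x,t),h(x,s))\le\int_s^t\frac{2r\,e^{-\lambda u/(1-u)}}{(1-u)^2}\,du$, whose integrand extends continuously by $0$ at $u=1$ and is therefore bounded, so $h$ is Lipschitz in $t$ as well; combining, $d(h(x,s),h(y,t))\le d(x,y)+C'|s-t|$. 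Finally $\dist(p,h(x,t))\le\dist(p,x)$ forces $h(U(p,r')\times[0,1])=U(p,r')$ for every $r'<r$. Hence $U(p,r)$ is a Lipschitz contractible ball and $X$ is SLLC.
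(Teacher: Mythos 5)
The central lemma of your plan---that $\dist(p,\cdot)^2$ is $\lambda$-strongly convex on some $U(p,r_0)$---is false in a general Alexandrov space, and the whole construction rests on it. A lower curvature bound yields only semi\emph{concavity} of $\dist(p,\cdot)^2$ (an upper barrier for second derivatives along geodesics); a uniform lower barrier is an upper-curvature-bound phenomenon, and it is not inherited from the exact $2$-convexity of $\dist(o,\cdot)^2$ on the tangent cone, because convexity does not pass through Gromov--Hausdorff approximation. Concretely, let $K\subset\mathbb R^2$ be a convex region with polygonal boundary whose vertices $q_i$ accumulate at a boundary point $p$ (e.g.\ the convex hull of $p$ together with a sequence of points of the unit circle converging to $p$), and let $X$ be the double of $K$, an Alexandrov space of curvature $\ge 0$. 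Using the $1$-Lipschitz folding map $F:X\to K$ one checks $\dist(p,w)=|p-F(w)|$ for every $w$; hence at any ridge point $z$ lying in the interior of an edge whose supporting line misses $p$, the restriction of $\dist(p,\cdot)$ to a short geodesic crossing the ridge transversally at $z$ has the form $t\mapsto\sqrt{a^2+(\,|t|-h)^2\,}$ near $t=0$ with $h>0$, i.e.\ it has a strictly concave corner, and so does its square. Such points $z$ occur in every ball $U(p,r_0)$, so $\dist(p,\cdot)^2$ is not convex (let alone uniformly convex) on any neighborhood of $p$. In particular $-\dist(p,\cdot)^2$ is in general only semiconvex, so the Perelman--Petrunin gradient-flow calculus you invoke does not even apply to it: defining your flow already presupposes the false lemma. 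Your blow-up argument breaks precisely in the degenerate case you flag as the main obstacle: when the offending segments concentrate near singular points $q_i\to p$ distinct from $p$, the rescaled limit is a tangent cone at a nearby point (in the example, a flat cone over a circle of length $<2\pi$), which contains no line; there is no splitting and no Busemann function of a splitting line, and the normalized limit of $\dist(p,\cdot)^2$ is a non-affine Busemann-type function which itself has concave corners behind the apex, so no contradiction can be extracted---as indeed it cannot be, the statement being false.

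This is also why the paper takes a different route: it flows the distance function $f=d(S(p,R),\cdot)$ from a metric sphere and proves, via the conical structure of $T_pX$ (Proposition \ref{distance function from boundary}), the uniform regularity estimate $d_xf(\uparrow_x^p)>\cos\varepsilon$ on a punctured ball; this gives $|\Phi_t(x),p|\le|x,p|-t\cos\varepsilon$ (Lemma \ref{upper bound of time}), so the flow reaches $p$ in finite time uniformly and a linear time change yields the Lipschitz contraction. Your reparametrization $t\mapsto t/(1-t)$ is a reasonable device for converting exponential convergence into a Lipschitz homotopy on $[0,1]$, and it might be combined with Perelman's genuine strictly concave functions (discussed in the paper's Remark after Proposition \ref{distance function from boundary}) instead of the nonexistent strong convexity of $\dist(p,\cdot)^2$; but as written, the proposal's key lemma is wrong and the argument offered for it fails.
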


In \cite{Y simp}, a weaker form of Theorem \ref{Alex sp is SLLC} was conjectured. 
%%Note that this definition is a stronger form than the original definition of LLC in \cite{Y simp}. 

For metric spaces $P$ and $X$ and possibly empty subsets $Q \subset P$ and $A \subset X$, 
we denote by $f : (P,Q) \to (X, A)$ a map from $P$ to $X$ with $f(Q) \subset A$.
Two maps $f$ and $g$ from $(P,Q)$ to $(X,A)$ are {\it homotopic} (resp. {\it Lipschitz homotopic}) to each other if there exists a continuous (resp. Lipschitz) map 
\[
h : (P \times [0,1], Q \times [0,1]) \to (X,A)
\]
such that $h(x, 0) = f(x)$ and $h(x,1) = g(x)$ for all $x \in P$.
Then, we write $f \sim g$ (resp. $f \sim_{Lip} g$).
Let us denote by 
\[
[(P,Q), (X,A)] \text{ and } [(P,Q), (X,A)]_{Lip}
\]
the set of all homotopy classes of continuous maps from $(P,Q)$ to $(X,A)$ and the set of all Lipschitz homotopy classes of Lipschitz maps from $(P,Q)$ to $(X,A)$, respectively.

Let us consider a {\it Lipschitz simplicial complex} which means that it is a metric space and admits a triangulation such that each simplex is a bi-Lipschitz image of a simplex in a Euclidean space.
For precise definition, see Section \ref{proof of applications}.
\begin{corollary} \label{Lipschitz homotopy class is homotopy class}
Let $P$ be a finite Lipschitz simplicial complex and $Q$ a possibly empty subcomplex of $P$. 
Let $X$ be an Alexandrov space and $A$ an open subset of $X$.
Then, a natural map from $[(P,Q), (X,A)]_{Lip}$ to $[(P,Q), (X,A)]$ is bijective.
\end{corollary}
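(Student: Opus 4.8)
The natural map is well defined because a Lipschitz map is continuous and a Lipschitz homotopy of pairs is in particular a homotopy of pairs; the content is bijectivity. I would deduce both surjectivity and injectivity from a single \emph{relative Lipschitz approximation statement}: if $R$ is a subcomplex of $P$ containing $Q$ and $f\colon(P,Q)\to(X,A)$ is continuous and Lipschitz on $R$, then $f$ is homotopic, through maps of pairs $(P,Q)\to(X,A)$ and rel $R$, to a Lipschitz map. Surjectivity is the case $R=Q$ together with the (easy) remark that $f|_Q$ may itself first be made Lipschitz; injectivity follows by applying the statement to a homotopy $H\colon(P\times[0,1],Q\times[0,1])\to(X,A)$ between two given Lipschitz maps, after triangulating $P\times[0,1]$ as a finite Lipschitz complex in which $Q\times[0,1]$ and $R:=P\times\{0,1\}$ are subcomplexes and $H$ is Lipschitz on $R$ (products and subdivisions of finite Lipschitz complexes are again finite Lipschitz complexes).

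For the approximation statement, the plan is an obstruction-type induction over skeleta using the Lipschitz contractions furnished by Theorem \ref{Alex sp is SLLC}. Since $f(P)$ is compact and $A$ is open with $f(Q)$ compact, fix $\delta>0$ so that the $\delta$-neighborhood of $f(Q)$ lies in $A$. By SLLC, cover $f(P)$ by finitely many Lipschitz contractible balls $U(p_i,r_i)$ with Lipschitz contractions $h_i$, taking every ball that meets $f(Q)$ to have radius $<\delta$, so it is contained in $A$; by the ``strong'' clause of Definition \ref{def of SLLC}, $h_i$ maps $U(p_i,r')\times[0,1]$ onto $U(p_i,r')\subset A$ for every $r'<r_i$. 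Let $\lambda$ be a Lebesgue number of this cover and subdivide the triangulation of $P$, compatibly with $Q$ and $R$, so that $f$ carries the closed star of every simplex into a single cover element $B_\tau=U(p_{i(\tau)},r_{i(\tau)})$, chosen with $B_\tau\subset A$ whenever $\tau\subset Q$.

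Now build a Lipschitz map $g$ together with a Lipschitz homotopy of pairs $F\colon P\times[0,1]\to X$ from $f$ to $g$, inductively over $P^{(k)}\cup R$: on $R$ put $g=f$ and $F$ the constant homotopy; on the $0$-skeleton put $g=f$. Assume $g$ and $F$ are defined and Lipschitz on $P^{(k)}\cup R$ with $F_0=f$, $F_1=g$, and $F(\sigma\times[0,1])\subset B_\sigma$ for every $k$-simplex $\sigma$. For a $(k{+}1)$-simplex $\sigma\not\subset R$, with ball $U:=B_\sigma$ and contraction $h:=h_{i(\sigma)}$, first extend $g$ over $\sigma$ by coning: via a bi-Lipschitz identification of $\sigma$ with the cone $C(\partial\sigma)$, set $g$ on the class of $(y,t)$ to be $h\big(g|_{\partial\sigma}(y),t\big)$, a Lipschitz map $\sigma\to U$ extending $g|_{\partial\sigma}$; then extend $F$ over $\sigma\times[0,1]$ the same way, coning off the already-defined map on $\partial(\sigma\times[0,1])$ via a bi-Lipschitz identification with $C(\partial(\sigma\times[0,1]))$ and composition with $h$, which keeps the image in $U$ (hence in $A$ when $\sigma\subset Q$). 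After the top skeleton this produces the desired Lipschitz $g\colon(P,Q)\to(X,A)$ with $g=f$ on $R$ and Lipschitz homotopy of pairs $F$ from $f$ to $g$ rel $R$, proving the statement and hence the corollary.

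The main obstacle is bookkeeping rather than geometry. Two points require care: (i) ensuring that the inductively built $g$ and $F$ on a proper face $\rho<\sigma$ actually land in $B_\sigma$, which I would arrange by making the subdivision fine enough that $f$ of the star of the (open) star of each simplex lies in one cover element, or, failing a uniform such choice, by a secondary shrinking of targets at each skeleton level using the ``strong'' image clause; and (ii) checking that the bi-Lipschitz cone and boundary-of-cylinder parametrizations have constants that combine, over the finitely many simplices and finitely many skeleta, into global Lipschitz constants for $g$ and $F$ --- this is routine for piecewise-linear homeomorphisms of compact polyhedra but is exactly where the estimates accumulate. Conceptually, everything rests on the single fact that a \emph{strong} Lipschitz contraction $h$ of a small ball $U(p,r)$ yields a Lipschitz coning operation, with controlled constants, sending any Lipschitz map of a finite complex into $U(p,r)$ to a Lipschitz map of its cone, confined to $U(p,r')\subset A$ by the image clause.
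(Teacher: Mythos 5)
Your overall route is essentially the paper's: a relative, skeleton-by-skeleton Lipschitz approximation using a finite cover of the image by Lipschitz contractible balls and the coning operation furnished by the contractions (this is exactly Lemma \ref{lemma} and the induction of Proposition \ref{HL}/Corollary \ref{HL1}), with injectivity handled, as in the paper, by applying the relative statement to a homotopy on $\tilde P=P\times[0,1]$ rel $\tilde R=P\times\{0,1\}$. One claim, however, is simply false as stated: you cannot produce a \emph{Lipschitz} homotopy $F$ with $F_0=f$ when $f$ is merely continuous, since $F|_{P\times\{0\}}=f$ would force $f$ to be Lipschitz. The paper only constructs a continuous homotopy whose time-one map is Lipschitz, and that is all the corollary needs: for injectivity the Lipschitz homotopy between $f'$ and $g'$ is the Lipschitz \emph{map} $P\times[0,1]\to X$ obtained by approximating $H$ rel $P\times\{0,1\}$, not the deformation connecting $H$ to it. So you should drop the Lipschitzness of $F$; your coning formula $h(\phi(y),t)$ still gives a continuous extension with the correct boundary values and image in the ball.

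The second, more substantive problem is the induction step itself, which you flag as point (i) but do not resolve. To cone over a $(k{+}1)$-simplex $\sigma$ you must feed the boundary data into $h_{i(\sigma)}$, which is defined only on $B_\sigma\times[0,1]$; but your inductive hypothesis places $g(\rho)$ and $F(\rho\times[0,1])$ only in $B_\rho$ for the faces $\rho<\sigma$, and a Lebesgue-number choice of a fixed finite cover gives no inclusion $B_\rho\subset B_\sigma$. Your first proposed fix (a finer subdivision, or stars of stars) does not address this, because the obstruction is the fixed radii of the cover elements, not the size of $f$ of a star; the second (multi-scale shrinking via the strong clause of Definition \ref{def of SLLC}) can be made to work but requires a careful top-down choice of scales that you have not carried out. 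The paper avoids this altogether by a different device: it assigns to each lower-dimensional simplex the intersection $U_F=\bigcap_{F\subset E}U_E$ as the target constraint, extends the homotopy over each top simplex $E$ merely continuously into $U_E$ by the homotopy extension property, and only afterwards corrects the time-one map rel $\partial E$ to a Lipschitz one using Lemma \ref{lemma}, so that the ball-based coning is invoked only where a single Lipschitz contractible ball $U_E$ is actually available. Until you either implement the multi-scale selection or adopt such a two-step (extend continuously, then correct rel boundary) scheme, the inductive construction as written does not go through.
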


For %%an Alexandrov space 
a metric space $X$ and a point $x_0 \in X$ and $k \in \mathbb N$, we define {\it the $k$-th Lipschitz homotopy group} $\pi_k^{Lip}(X,x_0)$ by $\pi_k^{Lip}(X,x_0) = [(S^k, \ast), (X,x_0)]_{Lip}$ as sets, where $\ast \in S^k$ is an arbitrary point, equipped with a group operation as in the usual homotopy groups.
\begin{corollary} \label{Lipschitz homotopy group is homotopy group}
For an Alexandrov space $X$ and a point $x_0 \in X$ and $k \in \mathbb N$, a natural map
\[
\pi_k^{Lip} (X, x_0) \to \pi_k (X, x_0).
\]
is an isomorphism as groups.
\end{corollary}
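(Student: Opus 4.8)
The plan is to reduce the statement to Corollary~\ref{Lipschitz homotopy class is homotopy class}, applied to the pair $(P,Q)=(S^k,\ast)$ and to a suitable \emph{open} subset $A\subset X$ containing $x_0$. First I would realize $S^k$ as a finite Lipschitz simplicial complex: for instance the boundary $\partial\Delta^{k+1}$ of the standard $(k+1)$-simplex, equipped with its intrinsic metric, is bi-Lipschitz homeomorphic to the round sphere and carries the evident finite triangulation by affine simplices, each of which is bi-Lipschitz to a standard simplex; I take the basepoint $\ast$ to be a vertex, so that $\{\ast\}$ is a $0$-dimensional subcomplex. Next, by Theorem~\ref{Alex sp is SLLC}, I would fix a Lipschitz contractible ball $B=U(x_0,r)$ centered at $x_0$, together with a Lipschitz contraction $h\colon B\times[0,1]\to B$ from $id_B$ to the constant map at $x_0$.

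The first key step is a basepoint-roaming lemma: the natural maps
\[
[(S^k,\ast),(X,x_0)]_{Lip}\longrightarrow[(S^k,\ast),(X,B)]_{Lip},
\qquad
[(S^k,\ast),(X,x_0)]\longrightarrow[(S^k,\ast),(X,B)]
\]
induced by the inclusion $\{x_0\}\hookrightarrow B$ are both bijections, and they are compatible with the natural (forgetful) maps, so they fit into a commutative square. The topological statement is classical: given $f$ with $f(\ast)=a\in B$, drag the basepoint to $x_0$ along a path in $B$ (which exists since $B$ is path-connected) to obtain a based map, and use that $B$ is simply connected, being contractible, to see that the resulting class does not depend on the path and that a homotopy through maps sending $\ast$ into $B$ descends to a based homotopy. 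For the Lipschitz statement I would carry out the same scheme, building every dragging homotopy explicitly out of the Lipschitz contraction $h$ of $B$ and affine reparametrizations of $[0,1]$, so that all auxiliary maps are Lipschitz with constants controlled by $\Lip(h)$; the point is that two Lipschitz paths in $B$ with common endpoints are then Lipschitz homotopic rel endpoints inside $B$, which is the Lipschitz substitute for the simple connectivity used above.

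The second step is to invoke Corollary~\ref{Lipschitz homotopy class is homotopy class} with $P=S^k$, $Q=\{\ast\}$, the given Alexandrov space $X$ and $A=B$ (which is open in $X$): it says exactly that the natural map $[(S^k,\ast),(X,B)]_{Lip}\to[(S^k,\ast),(X,B)]$ is a bijection. Chasing the commutative square from the previous paragraph then gives that the natural map
\[
\pi_k^{Lip}(X,x_0)=[(S^k,\ast),(X,x_0)]_{Lip}\longrightarrow[(S^k,\ast),(X,x_0)]=\pi_k(X,x_0)
\]
is a bijection. Finally, to upgrade this bijection to an isomorphism of groups (for $k\ge 1$), I would observe that both group operations are induced by the pinch map $S^k\to S^k\vee S^k$, which is Lipschitz, followed by the wedge of two representatives, and that the inclusion of Lipschitz maps into continuous maps is visibly compatible with this construction; hence the natural map is a homomorphism, and a bijective homomorphism is an isomorphism.

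I expect the only genuinely new content, and hence the main obstacle, to be the Lipschitz version of the basepoint-roaming lemma, that is, producing the basepoint-dragging homotopies inside $B$ as Lipschitz maps with uniformly controlled constants. This is precisely where Lipschitz contractibility of $B$, as opposed to mere contractibility, is needed; everything else is formal, with Corollary~\ref{Lipschitz homotopy class is homotopy class} entering only as a black box.
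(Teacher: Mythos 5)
Your argument is correct in outline, but it follows a genuinely different route from the paper's. The paper does not roam the basepoint at all: it observes that for a based map $f\colon (S^k,\ast)\to (X,x_0)$ the restriction $f|_{\{\ast\}}$ is automatically Lipschitz, and that Proposition \ref{HL} (and the argument proving Corollary \ref{Lipschitz homotopy class is homotopy class}) allows the straightening homotopy to be chosen \emph{relative to} any subcomplex on which the map is already Lipschitz. Applying this rel $\{\ast\}$ to $f$, and rel $S^k\times\{0,1\}\cup\{\ast\}\times[0,1]$ to a based continuous homotopy between two Lipschitz based representatives, one gets surjectivity and injectivity directly, with the basepoint nailed at $x_0$ throughout; the non-openness of $A=\{x_0\}$, which is what forces you to replace $\{x_0\}$ by an open ball $B$, simply never arises. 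Your route instead treats Corollary \ref{Lipschitz homotopy class is homotopy class} as a black box with $A=B=U(x_0,r)$ and pays for it with the Lipschitz change-of-basepoint lemma; that lemma is indeed provable from Theorem \ref{Alex sp is SLLC}, but note one point your sketch glosses over: the Lipschitz contraction $h$ of $B$ need not fix $x_0$ during the deformation, so ``Lipschitz simple connectivity'' of $B$ (two Lipschitz paths with common endpoints are Lipschitz homotopic rel endpoints) is not just $(s,t)\mapsto h(\alpha(s),t)$; one must run the standard track argument, writing $\alpha\sim \beta\ast c_{x_0}\ast\bar\beta$ with $\beta(t)=h(\alpha(0),t)$ and cancelling $\beta\ast\bar\beta$ by an explicit Lipschitz reparametrization homotopy, and similarly all dragging homotopies must be built through Lipschitz collar/retraction maps. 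What your approach buys is modularity (the corollary becomes a formal consequence of Corollary \ref{Lipschitz homotopy class is homotopy class} plus Lipschitz contractibility of a single ball, without reopening the induction in Proposition \ref{HL}); what the paper's one-line observation buys is the complete avoidance of this extra machinery. Your remark on the group structure (Lipschitz pinch map, hence the bijection is a homomorphism) is fine and is implicitly used by the paper as well.
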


\subsection{Application: the Plateau problem}

By Mese and Zulkowski \cite{MZ}, the Plateau problem in an Alexandrov space was considered as follows. 
Let $W^{1,2}(D^2, X)$ denote the $(1,2)$-Sobolev space from $D^2$ to an Alexandrov space $X$ in the sense of the Sobolev space of a metric space target defined by Korevaar and Schoen \cite{KS}.
Giving a closed Jordan curve $\Gamma$ in $X$, we set 
\begin{align*}
\mathcal F_\Gamma &:= \{u \in W^{1,2}(D^2,X) \cap C(D^2, X) \,;\, \\
& \hspace{60pt} u |_{\partial D^2} \text{ parametrizes } \Gamma \text{ monotonically}\}.
\end{align*}
They defined the area $A(u)$ of a Sobolev map $u \in W^{1,2}(D^2, X)$.
Under these settings, the Plateau problem is stated as follows. 

\vspace{0.5em}

\noindent
{\bf The Plateau problem}. 
Find a map $u \in W^{1,2}(D^2, X)$ such that 
\[
A(u) = \inf \{A(v) \,|\, v \in \mathcal F_\Gamma \}.
\]

They obtained 
\begin{theorem}[\cite{MZ}] \label{theroem by MZ}
Let $X$ be a finite dimensional compact Alexandrov space and $\Gamma$ be a closed Jordan curve in $X$.
If $\mathcal F_\Gamma \neq \emptyset$, then there exists a solution of the Plateau problem.
\end{theorem}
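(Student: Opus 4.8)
The statement is due to Mese and Zulkowski \cite{MZ}; the plan is the direct method of the calculus of variations in the Morrey--Courant tradition, carried out for a metric space target via the Sobolev theory of Korevaar and Schoen \cite{KS}. First I would fix three distinct points on $\partial D^2$ and three distinct points on $\Gamma$ and restrict attention to those $u \in \mathcal F_\Gamma$ whose boundary trace sends the former to the latter; since precomposition with a conformal automorphism of $D^2$ changes neither $A(u)$ nor membership in $\mathcal F_\Gamma$, this is harmless, and it is precisely the Courant three-point normalization that will later prevent the boundary parametrizations from degenerating. Then take a minimizing sequence $u_n \in \mathcal F_\Gamma$ with $A(u_n) \to \inf_{\mathcal F_\Gamma} A$.

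Next I would pass to an \emph{almost conformal} minimizing sequence. For any $v \in W^{1,2}(D^2, X)$ one has $2 A(v) \le E(v)$, where $E$ denotes the Korevaar--Schoen energy, with near-equality when $v$ is nearly conformal; using the directional energy densities of $v$ and an $\varepsilon$-conformality argument in the style of Morrey, each $u_n$ can be reparametrized so that $2 A(u_n) \le E(u_n) \le 2 A(u_n) + 1/n$ while keeping $u_n \in \mathcal F_\Gamma$ together with the three-point normalization. In particular $\sup_n E(u_n) < \infty$.

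Now extract a limit. The uniform energy bound and the compactness of $X$ give, along a subsequence, a map $u \in W^{1,2}(D^2, X)$ with $u_n \to u$ in $L^2(D^2, X)$, and lower semicontinuity of the Korevaar--Schoen energy yields $E(u) \le \liminf_n E(u_n)$. The Courant--Lebesgue lemma, which remains valid for metric targets, makes the boundary traces $u_n|_{\partial D^2}$ equicontinuous; together with the three-point normalization a subsequence of them converges uniformly to a weakly monotone parametrization of $\Gamma$, the limit $u$ extends continuously to $\overline{D^2}$ with this boundary value, and hence $u \in \mathcal F_\Gamma$. Combining, $2 A(u) \le E(u) \le \liminf_n E(u_n) \le \liminf_n (2 A(u_n) + 1/n) = 2 \inf_{\mathcal F_\Gamma} A$, so $A(u) = \inf_{\mathcal F_\Gamma} A$ and $u$ is a solution.

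The main obstacle is the compactness step: producing an $L^2$-convergent subsequence of a minimizing sequence into a nonlinear target while simultaneously controlling the boundary behaviour so that the traces do not collapse and the limit stays in $\mathcal F_\Gamma$. Here the compactness of the Alexandrov space $X$ (hence its properness) and the Courant--Lebesgue estimate are essential; the construction of the almost conformal reparametrizations, which in the metric setting relies only on the measurable pull-back data of $v$ rather than on a smooth conformal structure, is the other delicate point. The lower curvature bound is used only through the good first-order Sobolev calculus it provides on $X$ and the fact that $X$ is a geodesic space, not through any convexity, which is unavailable since $X$ need not be nonpositively curved.
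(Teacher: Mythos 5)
This theorem is not proved in the paper at all: it is quoted from Mese and Zulkowski \cite{MZ}, and the paper's own contribution around it is only Corollary \ref{corollary: Plateau problem}, i.e.\ the verification of the hypothesis $\mathcal F_\Gamma \neq \emptyset$. So there is no internal proof to compare with; what can be assessed is how your sketch relates to the argument of \cite{MZ}. Your outline --- the Courant three-point normalization, an almost conformal minimizing sequence obtained by a Morrey-type $\varepsilon$-conformality reparametrization so that $2A(u_n) \le E(u_n) \le 2A(u_n)+1/n$, Korevaar--Schoen precompactness (using compactness of $X$) and lower semicontinuity of $E$, and the Courant--Lebesgue lemma to control the boundary traces --- is indeed the strategy carried out there.

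The step you pass over too quickly is the assertion that the $L^2$-limit $u$ ``extends continuously to $\overline{D^2}$ \ldots and hence $u \in \mathcal F_\Gamma$''. Courant--Lebesgue together with the three-point condition gives equicontinuity of the boundary parametrizations only; it provides no interior modulus of continuity for the $u_n$ or for $u$, and since $X$ has merely a lower curvature bound the Korevaar--Schoen regularity theory for NPC targets is unavailable. This gap matters because of the asymmetry in your final chain of inequalities: lower semicontinuity only yields $2A(u) \le E(u) \le 2\inf_{\mathcal F_\Gamma} A$, i.e.\ $A(u) \le \inf_{\mathcal F_\Gamma} A$, and to conclude equality, as the Plateau problem demands, you must know either that $u$ is admissible (continuous on the disc with monotone boundary parametrization of $\Gamma$) or give a separate argument preventing the area of the limit from dropping below the infimum over $\mathcal F_\Gamma$. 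Establishing exactly this admissibility/continuity of the limit map in a space with only a lower curvature bound is a substantial part of the analysis in \cite{MZ}; apart from this missing piece, your outline is sound.
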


For an Alexandrov space, any condition of $\Gamma$ for implying $\mathcal F_\Gamma \neq \emptyset$ was not known.
As an application of Theorem \ref{Alex sp is SLLC}, we can obtain such a condition of $\Gamma$. 

\begin{corollary} \label{corollary: Plateau problem}
Let $\Gamma$ be a rectifiable closed Jordan curve in an Alexandrov space $X$. 
If $\Gamma$ is topologically contractible in $X$, then $\mathcal F_\Gamma \neq \emptyset$.
\end{corollary}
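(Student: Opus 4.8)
The plan is to produce an explicit element of $\mathcal F_\Gamma$ by Lipschitz-filling the contractible curve $\Gamma$ and then smoothing the result slightly so that it becomes a Sobolev map. First I would invoke Corollary \ref{Lipschitz homotopy group is homotopy group} (equivalently Corollary \ref{Lipschitz homotopy class is homotopy class} with $P=S^1$, $Q=\ast$, $A=X$): since $\Gamma$ is topologically contractible in $X$ and $\Gamma$ itself, being a rectifiable Jordan curve, is the image of a bi-Lipschitz embedding $\gamma: S^1 \to X$, the loop $\gamma$ represents the trivial class in $\pi_1(X)$, hence also in $\pi_1^{Lip}(X)$. Thus there is a Lipschitz null-homotopy, i.e. a Lipschitz map $H: S^1 \times [0,1] \to X$ with $H(\cdot,0)=\gamma$ and $H(\cdot,1)\equiv p$ for some base point $p$. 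Collapsing $S^1 \times \{1\}$ to the point $p$ realizes $D^2$ as a quotient of $S^1 \times [0,1]$ via the standard Lipschitz quotient map $q: S^1\times[0,1]\to D^2$ (polar coordinates), and since $H$ is constant on the collapsed circle it descends to a map $u_0: D^2 \to X$ that is Lipschitz away from the center and globally Lipschitz in fact, because $q$ is a metric quotient and $H$ sends the fibre over $0$ to a single point with the required uniform control. This $u_0$ is Lipschitz on the compact set $D^2$, hence belongs to $W^{1,2}(D^2,X)$ by the Korevaar–Schoen theory (a Lipschitz map into a metric space of finite Hausdorff dimension has finite $2$-energy), and it is continuous, so $u_0 \in W^{1,2}(D^2,X)\cap C(D^2,X)$.

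The one remaining point is the boundary condition: $\mathcal F_\Gamma$ requires $u|_{\partial D^2}$ to parametrize $\Gamma$ monotonically, and by construction $u_0|_{\partial D^2} = \gamma$, which is a bi-Lipschitz (in particular monotone, indeed injective) parametrization of $\Gamma$. So $u_0 \in \mathcal F_\Gamma$ directly, and no further adjustment is needed; $\mathcal F_\Gamma \neq \emptyset$.

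The step I expect to require the most care is verifying that the descended map $u_0$ is genuinely Lipschitz (and not merely Lipschitz on $D^2\setminus\{0\}$) with a uniform constant, so that the Korevaar–Schoen energy is finite. The subtlety is that the quotient map $q:S^1\times[0,1]\to D^2$ shrinks the metric near the collapsed circle: two points at parameter value $s$ close to $1$ that are far apart in $S^1\times[0,1]$ are close in $D^2$, so Lipschitz control of $H$ does not automatically transfer. The fix is to arrange the null-homotopy $H$ so that its "speed in the $S^1$ direction" decays as $s\to 1$ — which one gets for free from the strong local Lipschitz contractibility used to build the Lipschitz homotopy in the first place, since the contractions in Definition \ref{def of SLLC} come with the radius-monotonicity $h(U(p,r')\times[0,1])=U(p,r')$, forcing the right scaling — or, more simply, to note that any Lipschitz map on $D^2\setminus\{0\}$ which extends continuously over the center and whose extension is constant in a punctured neighborhood of no point is automatically Lipschitz on $D^2$, and then observe that even without that, a Lipschitz map defined and bounded on $D^2\setminus\{0\}$ with a single removable point still has finite Korevaar–Schoen $2$-energy because a point has zero $2$-capacity. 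Either route closes the argument; I would present the capacity remark as the clean way to finish.
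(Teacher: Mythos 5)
Your overall strategy (Lipschitz-fill the curve, then note that a Lipschitz map on $D^2$ has finite Korevaar--Schoen energy) is the right one, but the route you take to the Lipschitz filling has a genuine gap at the coning-off step, and you have correctly located it without repairing it. If $H : S^1\times[0,1]\to X$ is the Lipschitz null-homotopy provided by Corollary \ref{Lipschitz homotopy group is homotopy group}, the descended map $u_0(re^{i\theta})=H(\theta,1-r)$ has local Lipschitz constant on the circle of radius $r$ of order $\sup_\theta d(H(\theta,1-r),H(\theta',1-r))/(r|\theta-\theta'|)$, which Lipschitzness of $H$ only bounds by $\Lip(H)/r$. So $u_0$ is \emph{not} uniformly Lipschitz on $D^2\setminus\{0\}$; it is only locally Lipschitz there with constant blowing up like $1/r$. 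This defeats both of your fallback arguments: the removable-singularity claim and the zero-$2$-capacity remark each presuppose a uniform Lipschitz bound off the puncture, which is exactly what is missing. Nor is the energy automatically finite: the angular part of the energy density is bounded only by $\Lip(H)^2/r^2$, and $\int_0^1 r^{-2}\cdot r\,dr=\int_0^1 dr/r$ diverges, so one would need a genuine decay $|\partial_\theta H(\cdot,1-r)|\lesssim r^{\alpha}$, $\alpha>0$, which the corollary does not supply and which "comes for free from SLLC" is not a proof of — the null-homotopy produced by Corollary \ref{Lipschitz homotopy group is homotopy group} is an abstract Lipschitz homotopy with no control on how its angular speed degenerates as $s\to1$. (A minor additional overstatement: a rectifiable Jordan curve need not admit a \emph{bi}-Lipschitz parametrization, only a Lipschitz monotone one; this does not affect the argument.)

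The paper's proof performs the two steps in the opposite order, which is what makes it work: first cone off the \emph{topological} contraction $h:\Gamma\times[0,1]\to X$ to get a merely continuous map $g:D^2\to X$ with $g|_{\partial D^2}=\gamma$ (coning is harmless when only continuity is required), and then apply the relative form of Proposition \ref{HL} to the single $2$-simplex $D^2$ with the subcomplex $\partial D^2$ on which $g=\gamma$ is already Lipschitz. This yields a homotopy rel $\partial D^2$ from $g$ to a map $\tilde g$ that is globally Lipschitz on $D^2$, and then $E(\tilde g)\le \Lip(\tilde g)^2<\infty$ gives $\tilde g\in\mathcal F_\Gamma$. You could repair your argument by replacing the coning of the Lipschitz null-homotopy with exactly this appeal to Proposition \ref{HL} applied to the disk; as written, the proposal does not close.
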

%%%%%%%%%..........

\subsection{Application: simplicial volume}
In \cite[Theorem 0.5]{Y simp}, the second author proved, assuming an LLC-condition on an Alexandrov space, an inequality between the Gromov's simplicial volume and the Hausdorff measure of it.
As an immediate consequence of Theorem \ref{Alex sp is SLLC}, we obtain 
%%%From it and Theorem \ref{Alex sp is SLLC}, we obtain 

\begin{corollary}[{cf.\! \cite{G}, \cite{Y simp}}] \label{cor:simp vol}
Let $X$ be a compact orientable $n$-dimensional Alexandrov space without boundary of curvature $\ge \kappa$ for $\kappa < 0$. 
Then, $\|X\| \le n!\,(n-1)^n \sqrt{-\kappa}^{\,n} \, \mathcal H^n(X)$.
\end{corollary}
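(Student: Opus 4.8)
The plan is to deduce the corollary directly from \cite[Theorem 0.5]{Y simp} by invoking Theorem \ref{Alex sp is SLLC}. Recall that \cite[Theorem 0.5]{Y simp} asserts precisely the inequality
\[
\|X\| \le n!\,(n-1)^n \sqrt{-\kappa}^{\,n}\, \mathcal H^n(X)
\]
for every compact orientable $n$-dimensional Alexandrov space $X$ without boundary of curvature $\ge \kappa$ (with $\kappa < 0$) that satisfies the LLC-condition. Hence the only point to be checked is that an arbitrary such $X$ is LLC. By Theorem \ref{Alex sp is SLLC}, $X$ is SLLC, and since the SLLC-condition of Definition \ref{def of SLLC} is a strengthening of the LLC-condition (Remark \ref{rem:LLC}), $X$ is in particular LLC. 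Combining these two facts proves the corollary.

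For orientation, I recall the role of the LLC-condition in \cite[Theorem 0.5]{Y simp} and the source of the constant. By Gromov \cite{G}, a closed Riemannian $n$-manifold $M$ with sectional curvature $\ge -1$ satisfies $\|M\| \le (n-1)^n n!\,\mathrm{Vol}(M)$; rescaling the metric so that a lower curvature bound $\kappa < 0$ becomes $-1$ multiplies $\mathrm{Vol}(M)$ by $\sqrt{-\kappa}^{\,n}$ while leaving the topological invariant $\|M\|$ fixed, which yields the constant above. Such an estimate is proved by constructing a fundamental cycle of small $\ell^1$-norm from singular simplices supported in small metric balls; this requires, at all sufficiently small scales, contractions of small balls onto their centers with uniform local Lipschitz control, which is exactly the content of the LLC-condition. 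In \cite[Theorem 0.5]{Y simp} this scheme is carried out for Alexandrov spaces with $\mathrm{Vol}$ replaced by $\mathcal H^n$, under the hypothesis that $X$ be LLC; the present paper removes that hypothesis.

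Consequently, the only genuinely new ingredient here is Theorem \ref{Alex sp is SLLC}; once it is in hand, the corollary is immediate, and there is no obstacle internal to this argument. The single item requiring a line of care is that the LLC-condition used in \cite{Y simp} agrees with the one recorded in Remark \ref{rem:LLC} and is implied by Definition \ref{def of SLLC}; this holds by the way those notions are set up, so nothing further is needed. In brief, the content of the corollary is exactly the translation of Theorem \ref{Alex sp is SLLC} into the framework of \cite{Y simp}.
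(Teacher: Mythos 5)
Your proposal is correct and follows exactly the paper's route: the paper obtains Corollary \ref{cor:simp vol} as an immediate consequence of Theorem \ref{Alex sp is SLLC}, since SLLC implies the LLC-condition (Remark \ref{rem:LLC}) under which \cite[Theorem 0.5]{Y simp} already gives the stated inequality. Nothing further is needed.
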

%%%For precise terminology, we refer Subsection \ref{simplicial volume}.
Here, $\|X\|$ is the Gromov's simplicial volume which is the $\ell_1$-norm of the fundamental class of $X$, and $\mathcal H^n$ denotes the $n$-dimensional Hausdorff measure.
For precise terminologies, we refer \cite{G} and \cite{Y simp}.

Further, if we assume ``a lower Ricci curvature bound'' for $X$ in the sense of Bacher and Sturm \cite{BS}, then we obtain the following.

\begin{theorem} \label{thm:simp vol}
Let $X$ be a compact orientable $n$-dimensional Alexandrov space without boundary. 
Let $m$ be a locally finite Borel measure on $X$ with full support which is absolutely continuous with respect to $\mathcal H^n$.
If the metric measure space $(X,m)$ satisfies the reduced curvature-dimension condition $\cd^\ast(K,N)$ locally for $K, N \in \mathbb R$ with $N \ge 1$ and $K < 0$, then 
\[
\|X\| \le n!\, \sqrt{-(N-1)K}^{\, n} \mathcal H^n(X).
\]
\end{theorem}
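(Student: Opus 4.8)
The plan is to rerun the proof of Corollary \ref{cor:simp vol}, that is, of \cite[Theorem 0.5]{Y simp}, replacing the single place where a lower \emph{sectional} curvature bound is used --- a Bishop--Gromov volume comparison --- by the volume comparison coming from the reduced curvature--dimension condition. First I would record that a compact Alexandrov space automatically carries a global lower curvature bound and that, by Theorem \ref{Alex sp is SLLC}, it is SLLC and hence satisfies the LLC condition of \cite{Y simp}; therefore the topological half of the argument of \cite{Y simp} is available without modification. Concretely: for every small $\varepsilon>0$ one produces a fundamental cycle of $X$ whose singular simplices are obtained by straightening along the Lipschitz contractions of $\varepsilon$-small Lipschitz contractible balls and passing through the nerve of a good cover. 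The task reduces to bounding the $\ell^1$-norm of this cycle, and only this step sees the geometry.

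Next I would isolate the comparison to be inserted. Since $X$ is compact, geodesic, and --- being an Alexandrov space --- non-branching, the local condition $\cd^\ast(K,N)$ promotes to the global one, and, as is known for the reduced curvature--dimension condition (Bacher--Sturm \cite{BS}), it implies the Bishop--Gromov inequality: for every $x\in X$ the function $r\mapsto m(B(x,r))/\mathfrak v_{K,N}(r)$ is non-increasing, where $\mathfrak v_{K,N}(t)=\int_0^t\bigl(\sn_{K/(N-1)}(s)\bigr)^{N-1}\,ds$. Two features of this make the substitution go through. First, ratios $m(B(x,R))/m(B(x,r))$ are bounded by $\mathfrak v_{K,N}(R)/\mathfrak v_{K,N}(r)$, a quantity depending only on $K$, $N$ and the two scales and, in particular, insensitive to the normalization of $m$; this is exactly what controls, uniformly over $X$, the multiplicities of the $\varepsilon$-scale good covers, hence the local complexity of the straightened cycle. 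Second, the model underlying $\mathfrak v_{K,N}$ is the $N$-dimensional space form of curvature $\kappa_0=K/(N-1)<0$, whose volume grows exponentially at the rate $(N-1)\sqrt{-\kappa_0}=\sqrt{-(N-1)K}$; this takes over the role played in \cite{Y simp} by the rate $(n-1)\sqrt{-\kappa}$ of the space form of curvature $\kappa$, so that the multiplicative constant accumulated by the bookkeeping there converges, as $\varepsilon\to0$, to $n!\,\sqrt{-(N-1)K}^{\,n}$.

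Inserting these two facts into the norm estimate of \cite{Y simp}, I expect to obtain, for each small $\varepsilon$, a fundamental cycle of $X$ with $\ell^1$-norm at most $\bigl(n!\,\sqrt{-(N-1)K}^{\,n}+o(1)\bigr)\,\mathcal H^n(X)$, the factor $\mathcal H^n(X)$ arising exactly as in \cite{Y simp}: the cover multiplicities are estimated through scale-invariant ratios of $m$ alone, while the remaining ``mass accounting'' --- summing the local contributions of a disjointed family of small balls --- is carried out against $\mathcal H^n$. Letting $\varepsilon\to0$ then gives $\|X\|\le n!\,\sqrt{-(N-1)K}^{\,n}\,\mathcal H^n(X)$.

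The hardest part will be making the previous paragraph rigorous. One must (i) confirm that the Bishop--Gromov inequality for $\cd^\ast(K,N)$ holds for $(X,m)$ in the global form above --- here the local-to-global behaviour of the reduced condition, available because Alexandrov spaces are non-branching, is the relevant input --- and, more delicately, (ii) check that the combinatorial construction of \cite{Y simp}, which there used the lower curvature bound both to bound cover multiplicities \emph{and} to furnish lower volume bounds for small balls with respect to $\mathcal H^n$ itself, still closes up when the volume comparison is available only for the auxiliary measure $m$. What makes (ii) work is that the multiplicities are governed only by ratios of $m$ and so can be estimated from the stated Bishop--Gromov inequality with no lower bound on $m$ at all, after which the conclusion is recorded in terms of $\mathcal H^n$ precisely as in \cite{Y simp}; carrying this through carefully is the technical core of the argument.
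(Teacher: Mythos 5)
Your high-level plan --- rerun the argument of \cite{Y simp}, replacing the Bishop--Gromov input coming from a lower sectional bound by the one coming from the reduced curvature-dimension condition --- is indeed the idea behind the paper's proof, and you correctly identify $\sqrt{-(N-1)K}$ as the exponential growth rate of the model volume $\bar v_{K,N}$. However, the concrete mechanism you describe cannot produce the stated constant. The bound in \cite{Y simp} (following Gromov \cite{G}) is not obtained by controlling multiplicities of $\varepsilon$-scale good covers and letting $\varepsilon\to 0$: at small scales the comparison ratios are insensitive to $K$ (as $r,R\to 0$ one has $\bar v_{K,N}(R)/\bar v_{K,N}(r)\to (R/r)^N$), so no constant that tends to $0$ as $K\to 0^-$ can emerge from an $\varepsilon\to 0$ covering/nerve argument; the $K$-dependence enters only through the large-radius asymptotics $\bar v_{K,N}'(R)/\bar v_{K,N}(R)\to\sqrt{-(N-1)K}$ as $R\to\infty$. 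For that one must work on the universal cover $\pi:Y\to X$, where balls of arbitrarily large radius are available and everything can be made $\pi_1(X)$-equivariant: the paper uses Gromov's diffusion (averaging) operator $S_{R,\epsilon}(y)=\psi_{R,\epsilon}(d(y,\cdot))\,m_Y$ on $Y$, the inequality $\|\alpha\|_1\le n!\,(\mathcal L[S])^n\,\mass(\alpha)$, and $\mass([X])=\mathcal H^n(X)$, and then lets $\epsilon\to 0$, $R\to\infty$. Your proposal never mentions the universal cover, and this omission hides a step the paper must actually supply: since the hypothesis is only $\cd^\ast_\loc(K,N)$ for $(X,m)$, one has to lift $m$ to $m_Y$ and show the lifted metric measure space $(Y,m_Y)$ again satisfies $\cd^\ast_\loc(K,N)$ (Proposition \ref{lift}); only then does non-branching plus Cavalletti--Sturm ($\cd_\loc(K-,N)\Rightarrow\mcp(K-,N)$, Theorem \ref{CD_loc to MCP}) yield $\bg(K,N)$ for $(Y,m_Y)$, which is the comparison actually used --- on the noncompact cover, not on $X$.

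Relatedly, your description of where the hypotheses act is off in a way that matters. The factor $\mathcal H^n(X)$ does not arise from ``mass accounting over a disjointed family of small balls'' but from the identity $\mass([X])=\mathcal H^n(X)$ of \cite{Y simp}, and the assumption $m\ll\mathcal H^n$ is needed precisely so that $S_{R,\epsilon}$ is differentiable $m_Y$-a.e.\ with respect to the Otsu--Shioya differentiable structure \cite{OS}, making it a differentiable averaging operator; neither ingredient appears in your sketch, and your ``hardest part (ii)'' is therefore addressing a difficulty that is not the one the proof has to overcome. The role of Theorem \ref{Alex sp is SLLC} here is only the one you implicitly use at the start (making the Lipschitz-chain framework of \cite{Y simp} available), so that part of your plan is fine; the missing ideas are the passage to the universal cover with the lifted measure and the large-$R$ limit of the averaged operator, without which the claimed constant $n!\,\sqrt{-(N-1)K}^{\,n}$ is not reachable.
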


Theorem \ref{thm:simp vol} is new even if $X$ is a manifold, because a reference measure $m$ can be freely chosen. 

\vspace{.5em}
\noindent
{\bf Organization}.
We review fundamental properties of Alexandrov spaces in Section \ref{preliminaries}. 
In particular, we recall the theory of the gradient flow of distance functions on an Alexandrov space established by Perelman and Petrunin \cite{PP QG}.
In Section \ref{proof of main theorem}, we prove that the distance function from a metric sphere at each point in an Alexandrov space is regular on a much smaller concentric punctured ball. 
Then, using the flow of it, we prove Theorem \ref{Alex sp is SLLC}.
In Section \ref{proof of applications}, we recall precise terminologies in the applications in the introduction, and prove Corollaries \ref{Lipschitz homotopy class is homotopy class}, \ref{Lipschitz homotopy group is homotopy group} and \ref{cor:simp vol}. 
In Section \ref{sec:infinite dim}, we note that our proof given in Section \ref{proof of main theorem} also works for infinite dimensional Alexandrov spaces whenever the space of directions is compact.
In Section \ref{sec:simp vol}, we recall several notions of a lower Ricci curvature bound on metric space togerther with a Borel measure and their relation. 
%%%We explain that the Bishop-Gromov type volume growth inequality implied by a generalized lower Ricci curvature bound can be used to prove Theorem \ref{thm:simp vol}.
By using the Bishop-Gromov type volume growth inequality, we prove Theorem \ref{thm:simp vol}.

\section{Preliminaries} \label{preliminaries}
This section consists of just a review of the definition of Alexandrov spaces and somewhat detailed review of the gradient flow theory of semiconcave functions on Alexandrov spaces.
For precisely, we refer \cite{BGP}, \cite{BBI} or \cite{Pet semi}.

We recall the definition of Alexandrov spaces. 
\begin{definition}[cf. \cite{BBI}, \cite{BGP}] \upshape \label{def:Alexandrov space}
Let $\kappa \in \mathbb R$.
We call a complete metric space $X$ an {\it Alexandrov space of curvature $\ge \kappa$} if it satisfies the following. 
\begin{itemize}
\item[(1)] $X$ is a geodesic space, i.e., for every $x$ and $y$ in $X$, there is a curve $\gamma : [0, |x,y|] \to X$ such that $\gamma(0) = x$ and $\gamma(|x,y|) = y$ and the length $L(\gamma)$ of $\gamma$ equals $|x,y|$.
Here, $|x,y|$ denotes the distance between $x$ and $y$.
We call such a curve $\gamma$ a geodesic between $x$ and $y$, and denote it by $x y$.
\item[(2)] $X$ has curvature $\ge \kappa$, i.e., for every $p$, $q$, $r \in X$ $($\!\! with $|p, q| + |q, r| + |r, p| < 2 \pi / \sqrt \kappa$ if $\kappa > 0$\!\! $)$ and every $x$ in a geodesic $q r$ between $q$ and $r$, taking a comparison triangle $\triangle \tilde p \tilde q \tilde r = \tilde \triangle p q r$ in a simply-connected complete surface $\mathbb M_\kappa$ of constant curvature $\kappa$, and corresponding point $\tilde x$ in $\tilde q \tilde r$, we have 
\[
|p, x| \ge |\tilde p, \tilde x|.
\]
\end{itemize}
We simply say that a complete metric space $X$ is an {\it Alexandrov space} if it is a geodesic space and for any $p \in X$, there exists a neighborhood $U$ of $p$ and $\kappa \in \mathbb R$ such that $U$ has curvature $\ge \kappa$ as the condition $(2)$, i.e., any triangle in $U$ (whose sides contained in $U$) is not thinner than its comparison triangle in $\mathbb M_\kappa$.
\end{definition}
If $X$ is compact, then it has a uniform lower curvature bound.
Throughout in the paper, we do not need a uniform lower curvature bound, since we are mainly interested in a local property.
It is known that if $X$ has a uniform lower curvature bound, say $\kappa$, then $X$ has curvature $\ge \kappa$ (\cite{BGP}).

\subsection{Semiconcave functions} \label{subsection: semiconcave function}
%%%% \label{sec:gradient}
%%%% In this section, we recall the notion of concave function, and the integral flow of the gradient of such a function on an Alexandrov space.
%%%% We always assume that any geodesic is arclength parametrized.

%%%% First, we recall the definition of a concave function on an interval.

In this subsection, we refer \cite{Pet semi} and \cite{Pet QG}.
\begin{definition} \upshape \label{concave function on interval}
Let $I$ be an interval and $\lambda \in \mathbb R$.
We say a function $f : I \to \mathbb R$ to be {\it $\lambda$-concave} if the function 
\[
\bar f(t) = f(t) - \frac{\lambda}{2} t^2
\]
is concave on $I$.
Namely, for any $t < t' < t''$ in $I$, we have
\[
\frac{\bar f(t')-\bar f(t)}{t' -t} \ge \frac{\bar f(t'') - \bar f(t')}{t''-t'}.
\]

We say a function $f : I \to \mathbb R$ to be {\it $\lambda$-concave in the barrier sense} if for any $t_0 \in \mathrm{int}\, I$, there exist a neighborhood $I_0$ of $t_0$ in $I$ and a twice differentiable function $g : I_0 \to \mathbb R$ such that 
\begin{equation*}
g(t_0) = f(t_0),\, g \ge f \text{ and } g'' \le \lambda \text{ on } \mathrm{int}\, I.
\end{equation*}
\end{definition}

\begin{lemma}[cf.\! \cite{Pet QG}] \label{concavity are same}
Let $f : I \to \mathbb R$ be a continuous function on an interval $I$ and $\lambda \in \mathbb R$.
Then the following are equivalent.
\begin{itemize}
\item[(1)] $f$ is $\lambda$-concave in the sense of Definition \ref{concave function on interval};
\item[(2)] For any $t_0 \in I$, there is $A \in \mathbb R$ such that 
\[
f(t) \le f(t_0) + A (t- t_0) + \frac{\lambda}{2} (t-t_0)^2
\]
for any $t \in I$.
\item[(3)] $f$ is $\lambda$-concave in the barrier sense.
\end{itemize}
\end{lemma}
\begin{proof}
By considering $f(t) - (\lambda /2)\, t^2$, we may assume that $\lambda = 0$.

Let us prove the implication $(1) \Rightarrow (2)$.
Let us take $t_0 \in I$ to be not the maximum number of $I$.
By the concavity of $f$, the value 
\[
A = \lim_{\varepsilon \to 0+} \frac{f(t_0 + \varepsilon)-f(t_0)}{\varepsilon}
\]
is well-defined.
And, the concavity of $f$ implies 
\[
f(t) \le f(t_0) + A (t-t_0).
\]
When $t_0 \in I$ is the maximum value of $I$, then replacing $A$ to the limit $\lim_{\varepsilon \to 0+}(f(t_0-\varepsilon)-f(t_0)) / \varepsilon$, we obtain the inequality as same as above.

The implication $(2) \Rightarrow (3)$ is trivial.

Let us assume that $f$ satisfies $(3)$. Let us take $t_0$ in the interior of $I$. Then there exists a twice differentiable function $g : I \to \mathbb R$ such that 
\[
g(t_0) = f(t_0),\, g \ge f \text{ and } g'' \le 0.
\]
Hence, for any $t' < t_0 < t$, we have 
\[
\frac{f(t)-f(t_0)}{t-t_0} 
\le \frac{g(t)-g(t_0)}{t-t_0} 
\le \frac{g(t_0)-g(t')}{t_0-t'} 
\le \frac{f(t_0)-f(t')}{t_0-t'}.
\]
Therefore, $f$ is concave.
\end{proof}
Let $X$ be a geodesic space and $U$ be an open subset of $X$.
Let $f : U \to \mathbb R$ be a function.
We say that $f$ is {\it $\lambda$-concave} on $U$ if for every geodesic $\gamma : I \to U$, the function $f \circ \gamma : I \to \mathbb R$ is $\lambda$-concave on $I$.
For a function $g : U \to \mathbb R$, we say that $f$ to be {\it $g$-concave} if for any $p \in U$ and $\varepsilon > 0$, there is an open neighborhood $V$ of $p$ in $U$, such that $f$ is $(g(p)+\varepsilon)$-concave on $V$.
%%%% For a function $g : U \to \mathbb R$, we say that $f : U \to \mathbb R$ is {\it $g$-concave} on $U$ if for any $p \in U$ and $\varepsilon > 0$, there exists an open neighborhood $V$ of $p$ in $U$ such that $f|_V$ is $(g(p)+\varepsilon)$-concave.
We say that $f : U \to \mathbb R$ is {\it $g$-concave in the barrier sense} if for any $p \in U$ and $\varepsilon > 0$, there exists an open neighborhood $V$ of $p$ in $U$ such that %%$f|_V$ is $(g(p) + \varepsilon)$-concave in the barrier sense.
for every geodesic $\gamma$ contained in $V$, $f \circ \gamma$ is $(g(p)+\varepsilon)$-concave in the barrier sense.
By a similar argument to the proof of Lemma \ref{concavity are same}, $f$ is $g$-concave if and only if $f$ is $g$-concave in the barrier sense.

From now on, we fix an Alexandrov space $X$. %%%%with curvature bounded below by $\kappa$.
We use results and notions on Alexandrov spaces obtained in \cite{BGP}, and we refer \cite{BBI}.
$T_p X$ denotes the tangent cone of $X$ at $p$, and $\Sigma_p X$ denotes the space of directions of $X$ at $p$.

For any $\lambda$-concave function $f : U \to \mathbb R$ on an open subset $U$ of $X$ and $p \in U$, and $\delta > 0$,
a function $f_\delta : \delta^{-1} U \to \mathbb R$ is defined by the same function $f_\delta = f$ on the same domain $\delta^{-1}U = U$ as sets. 
Since the metric of $\delta^{-1} U$ is the metric of $U$ multiplied by $\delta^{-1}$, $f_\delta$ is $\delta^2 \lambda$-concave on $\delta^{-1} U$.
In addition, if $f$ is Lipschitz near $p$, then the blow-up $d_p f : T_p X \to \mathbb R$, namely the limit with respect to some sequence $\delta_i \to 0$, 
\[
\lim_{i \to \infty} f_{\delta_i} : \lim_{i \to \infty} (\delta_i^{-1}U,p) \to \mathbb R
\]
is $0$-concave on $T_p X$.
$d_p f$ is called the {\it differential} of $f$ at $p$.
%%%% The derivation $d_p f$ depends on sequence $(\delta_i)$, but we omit $(\delta_i)$.
Note that the differential of locally Lipschitz semiconcave function always exist and does not depend on the choice of a sequence $(\delta_i)$.
Actually, $d_p f (\xi)$ is calculated by 
\[
d_p f(\xi) = \lim_{t \to 0+} \frac{f(\exp_p (t \xi)) - f(p)}{t}
\]
if $\xi \in \Sigma_p'$ is a geodesic direction, where $\exp_p(t \xi)$ denotes the geodesic starting from $p$ with the direction $\xi$.

\subsection{Distance functions as semiconcave functions}
For any real number $\kappa$, let us define ``trigonometric functions'' $\sn_\kappa$ and $\cs_\kappa$ by the following ODE. %%%% \eqref{ODE:trigonometric}.
\begin{equation*} \label{ODE:trigonometric}
\left\{
\begin{aligned}
&\sn_\kappa''(t) + \kappa \sn_\kappa(t) = 0, & \sn_\kappa (0) = 0, && \sn_\kappa'(0) = 1;\\
&\cs_\kappa''(t) + \kappa \cs_\kappa(t) = 0, & \cs_\kappa (0) = 1, && \cs_\kappa'(0) = 0.
\end{aligned}
\right.
\end{equation*}
They are explicitly represented as follows.
%%%% Explicitly, they are as follows.
\[
\sn_\kappa(t) = \sum_{n=0}^\infty \frac{(-\kappa)^n}{(2 n+1)!}\, t^{2 n+1} = \left\{ 
\begin{aligned}
& \frac{1}{\sqrt \kappa} \sin(\sqrt \kappa\, t) & (\text{if } \kappa > 0)\\
& t & (\text{if } \kappa = 0)\\
& \frac{1}{\sqrt{-\kappa}} \sinh (\sqrt{-\kappa}\, t) & (\text{if } \kappa < 0)
\end{aligned}
\right.
\]
\[
\cs_\kappa(t) = \sn_\kappa'(t) =\sum_{n=0}^\infty \frac{(-\kappa)^n}{(2 n)!}\, t^{2 n} = \left\{ 
\begin{aligned}
& \cos(\sqrt \kappa\, t) & (\text{if } \kappa > 0)\\
& 1 & (\text{if } \kappa = 0) \\
& \cosh (\sqrt{-\kappa}\, t) & (\text{if } \kappa < 0)
\end{aligned}
\right.
\]
These functions are elementary for the space form $\mathbb M_\kappa$
in the sense that they satisfy the following.
%%%% Namely these satisfy the following. 
Let us take any points $p, q, r \in \mathbb M_\kappa$ with $|pq|+|qr|+|rp| < 2\, \diam\, \mathbb M_\kappa$, and set $\theta := \angle qpr$.
Let $\gamma$ be the geodesic $pr$ with $\gamma(0) = p$ and $\gamma(|p,r|) = r$.
We set $\ell(t) = |q, \gamma(t)|$. 
When $\kappa \neq 0$, the cosine formula states
\begin{equation*} \label{cosine formula}
\cs_\kappa (\ell(t)) = \cs_\kappa |p q|\, \cs_\kappa t + \kappa\, \sn_\kappa |p q|\, \sn_\kappa t \cos \theta.
\end{equation*}
%%%Let us consider a triangle $\triangle p q r$ in $\mathbb M_\kappa$ with $|p q| + |q r|+ |r p| < 2\, \diam\, \mathbb M_\kappa$.
Then, we have 
\begin{equation} \label{cosine formula2}
\left(\cs_\kappa (\ell (t))\right)'' + \kappa\, \cs_\kappa (\ell(t)) = 0.
\end{equation}

\begin{lemma}[cf.\,\cite{PP QG}] \label{1-kf concave}
The distance function $d_A$ from a closed subset $A$ in an Alexandrov space $X$ of curvature $\ge \kappa$ is $(\cs_\kappa (d_A) / \sn_\kappa (d_A))$-concave on $(X - A) \cap \{d_A < \frac{\pi}{2 \sqrt \kappa} \}$. 
Here, if $\kappa \le 0$, then we consider $\frac{\pi}{2 \sqrt \kappa}$ as $+ \infty$, and if $\kappa = 0$, then we consider $\cs_\kappa (d_A) / \sn_\kappa (d_A)$ as $1 / d_A$.
\end{lemma}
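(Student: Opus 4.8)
The plan is to reduce to the distance from a single point and then take an infimum. Writing $d_A = \inf_{a\in A}|a,\cdot\,|$, I would first establish the assertion for each one--point distance function $\ell_a := |a,\cdot\,|$ --- namely that $\ell_a$ is $(\csk(\ell_a)/\snk(\ell_a))$-concave on $\{\,0<\ell_a<\pi/(2\sqrt\kappa)\,\}$ --- and then recover the statement for $d_A$ by observing that $\csk/\snk$ is (strictly) monotone on that range and that, locally, the infimum defining $d_A$ only involves the points of $A$ that are nearest to the geodesic in question.

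For the one--point case, fix a geodesic $\gamma$ and $t_0$ in the interior of its domain and set $\ell = \ell_a\circ\gamma$. Using the hinge version of the curvature lower bound (equivalently, monotonicity of comparison angles; see \cite{BGP}, \cite{BBI}), together with the standard fact that the forward and backward directions of $\gamma$ at $\gamma(t_0)$ make angles summing to $\pi$ with the direction of a shortest path from $\gamma(t_0)$ to $a$, one obtains on a neighborhood of $t_0$ the comparison $\ell(t)\le\tilde\ell(t)$, where $\tilde\ell(t) = \dist(\tilde a,\tilde\gamma(t))$ is computed in $\mathbb M_\kappa$ for a fixed point $\tilde a$ and a unit--speed geodesic $\tilde\gamma$ chosen so that $\tilde\ell(t_0) = \ell(t_0)$ and the angle at $\tilde\gamma(t_0)$ agrees with the one in $X$ (the angle--sum property is exactly what lets one use a single $\tilde\gamma$ on both sides of $t_0$). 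Since $\ell(t_0)<\pi/(2\sqrt\kappa)$, the function $\tilde\ell$ is smooth near $t_0$, and from the cosine formula \eqref{cosine formula2} one computes $\snk(\tilde\ell)\,\tilde\ell'' = \csk(\tilde\ell)(1-(\tilde\ell')^2)$; as $\tilde\ell$ is $1$-Lipschitz and $\csk(\tilde\ell)/\snk(\tilde\ell)>0$ near $t_0$, this gives $\tilde\ell''\le\csk(\tilde\ell)/\snk(\tilde\ell)$ there. Thus $\tilde\ell$ is a twice differentiable upper barrier for $\ell$ touching at $t_0$ with $\tilde\ell''\le\csk(\ell(t_0))/\snk(\ell(t_0))+\varepsilon$ on a small enough neighborhood, and the barrier criterion of Lemma \ref{concavity are same} (and the remark on $g$-concavity following it) shows that $\ell_a$ is $(\csk(\ell_a)/\snk(\ell_a))$-concave.

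To pass to $d_A$, fix $p$ with $0<d_A(p)<\pi/(2\sqrt\kappa)$ and $\varepsilon>0$, put $L = d_A(p)$, and choose $L'\in(0,L)$ with $\csk(L')/\snk(L')<\csk(L)/\snk(L)+\varepsilon$ (possible by continuity of $\csk/\snk$). Pick a neighborhood $V$ of $p$ on which $L'<d_A<\pi/(2\sqrt\kappa)$ and whose diameter is so small that for every geodesic $\gamma:I\to V$ each $a\in A$ relevant to the infimum $d_A\circ\gamma=\inf_a\ell_a\circ\gamma$ on $I$ satisfies $L'<\ell_a<\pi/(2\sqrt\kappa)$ along $I$ --- the points $a$ with $\ell_a$ too large on $I$ do not affect the infimum there and may be discarded. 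For such $a$, the one--point case together with the monotonicity $\csk(\ell_a)/\snk(\ell_a)<\csk(L')/\snk(L')<\csk(L)/\snk(L)+\varepsilon$ along $I$ shows that $\ell_a\circ\gamma$ is $(\csk(L)/\snk(L)+\varepsilon)$-concave on $I$; hence so is the infimum $d_A\circ\gamma$, since an infimum of $\lambda$-concave functions is $\lambda$-concave. Therefore $d_A$ is $(\csk(d_A)/\snk(d_A))$-concave; the cases $\kappa\le 0$ (where $\pi/(2\sqrt\kappa)=+\infty$ and $\csk>0$ throughout) and $\kappa=0$ (where $\csk(d_A)/\snk(d_A)=1/d_A$) are handled in exactly the same way.

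The hard part is the bookkeeping in the last step: the ``constant'' $\csk(d_A)/\snk(d_A)$ genuinely varies from point to point, whereas an infimum of $\lambda$-concave functions needs one fixed $\lambda$. This is precisely why the monotonicity of $\csk/\snk$ and the restriction $d_A<\pi/(2\sqrt\kappa)$ --- which forces $\csk(d_A)>0$, exactly the hypothesis the model computation $\tilde\ell''=\csk(\tilde\ell)(1-(\tilde\ell')^2)/\snk(\tilde\ell)\le\csk(\tilde\ell)/\snk(\tilde\ell)$ requires --- are essential, and why the infimum must be localized to the nearby points of $A$ before the comparison can be applied uniformly.
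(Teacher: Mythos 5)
Your argument is correct, but it takes a different route from the paper at the point where the set $A$ enters. The paper does not prove the one-point case and then pass to an infimum; instead it works with $d_A\circ\gamma$ directly: fixing the parameter $t_0=0$, it chooses a \emph{nearest} point $w\in A$ to $\gamma(0)$, builds the model configuration $(\tilde w,\tilde\gamma)$ with the same hinge data, and uses $d_A\circ\gamma\le|w,\gamma(\cdot)|\le\tilde\ell$ together with equality at $t=0$, so that the model function is immediately an upper barrier for $d_A\circ\gamma$ at the touching point (the comparison is then run through $-\tfrac1\kappa\csk(\ell)$ and \eqref{cosine formula2}, which is algebraically the same computation as your $\snk(\tilde\ell)\,\tilde\ell''=\csk(\tilde\ell)\bigl(1-(\tilde\ell')^2\bigr)$). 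This nearest-point trick is exactly what lets the paper skip the step you correctly identify as the delicate one: discarding far-away points of $A$, uniformizing the variable bound $\csk(d_A)/\snk(d_A)$ via monotonicity, and invoking stability of $\lambda$-concavity under infima and under localization. Your bookkeeping there is sound (with a fixed margin for the discarded points and local-to-global concavity on the interval it goes through), but it is extra work the paper's choice of $w$ renders unnecessary. Two further small differences: the paper handles $\kappa=0$ by taking a negative lower bound $\kappa$ and letting $\kappa\to0^-$, whereas your barrier computation covers $\kappa=0$ directly; and both you and the paper rely, without proof, on the same standard facts (hinge comparison and the fact that the two adjacent angles at an interior point of a geodesic sum to $\pi$, which is what allows a single model geodesic $\tilde\gamma$ to serve on both sides of the touching parameter), so your level of rigor matches the paper's.
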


\begin{proof}
We consider the case that $\kappa \neq 0$.
Let us take any geodesic $\gamma$ contained in $(X - A) \cap \{d_A < \frac{\pi}{2 \sqrt \kappa}\}$.
We take $x$ on $\gamma$ and reparametrize $\gamma$ as $x = \gamma(0)$.
We choose $w \in A$ such that $|A x| = |wx|$.
We set $\ell(t) := |A, \gamma(t)|$. 
Let us take a geodesic $\tilde \gamma$ and a point $\tilde w$ in the $\kappa$-plane $\mathbb M_\kappa$ such that $|\tilde w \tilde \gamma(0)| = |w x|$ and $\angle (\uparrow_{\tilde x}^{\tilde w}, \tilde \gamma^+(0)) = \angle (\uparrow_x^w, \gamma^+(0))$.
Let us set $\tilde \ell(t) := |\tilde w, \tilde \gamma(t)|$. %% and  $\tilde f (t) := \rho_\kappa (\tilde \ell(t))$.
By the Alexandrov convexity, $\ell(t) \le \tilde \ell(t)$. %% and $\ell(0) = \tilde \ell(0)$.
Noticing the sign, we obtain
\[
- \frac{1}{\kappa} \cs_\kappa (\ell) \le - \frac{1}{\kappa} \cs_\kappa (\tilde \ell).
\]
% Hence, we have 
% \[
% f(t) \le \tilde f(t) \text{ and } f(0) = \tilde f(0).
% \]
By the calculation \eqref{cosine formula2}, we have 
\[
\left( - \frac{1}{\kappa} \cs_\kappa (\ell) \right)'' \le \cs_\kappa (\ell)
\]
at $t = 0$ in the barrier sense.
On the other hand, we can calculate the second derivative as   
\[
\left( \cs_\kappa \circ \ell (t) \right)'' = - \kappa \left[ \cs_\kappa (\ell) \cdot (\ell')^2 + \sn_\kappa (\ell) \cdot \ell'' \right]
\]
in the barrier sense. 
Noticing that $\cs_\kappa (\ell) \ge 0$ if $\ell \le \frac{\pi}{2 \sqrt \kappa}$, we obtain 
\[
\sn_\kappa (\ell) \cdot \ell'' \le \cs_\kappa (\ell)
\]
at $t = 0$ in the barrier sense. 
It completes the proof of the lemma if $\kappa \neq 0$. 
When $X$ has nonnegative curvature, taking a negative number $\kappa$ as a lower curvature bound of $X$ and tending $\kappa$ to $0$, we obtain $\cs_\kappa (d_A) / \sn_\kappa (d_A) \to 1 / d_A$.
\end{proof}

\subsection{Gradient flows}
In this subsection, we refer \cite{Pet semi}, \cite{Pet QG} and \cite{PP QG}.

For vectors $v, w$ in the tangent cone $T_p X$, setting $o = o_p$ the origin of $T_pX$, we define $|v| = |o,v|$ and 
\[
\langle v, w \rangle = \left\{ 
\begin{aligned}
&|v| |w| \cos \angle v o w &&\text{ if } |v|, |w| > 0 \\
&0 &&\text{ otherwise. }
\end{aligned} 
\right.
\]
\begin{definition}[\cite{PP QG}, \cite{Pet QG}] \upshape \label{def:semiconcave}
Let $f$ be a $\lambda$-concave function on an open subset $U$ of $X$.
We say that a vector $g \in T_p X$ at $p \in U$ is a {\it gradient} of $f$ at $p$ if it satisfies 
\begin{align*}
(1)\,\, 
& d f_p(v) \le \langle v , g \rangle \text{ for all } v \in T_p X;
\\
(2)\,\,
& d f_p(g) = \langle g , g \rangle.
\end{align*}
We recall that $g$ uniquely exists. 

We say that $f$ is {\it regular at} $p$ if $d_p f(v) > 0$ for some $v \in T_p X$, equivalently, $|\nabla_p f| > 0$.
Otherwise, $f$ is said to be {\it critical at} $p$.
\end{definition}

\begin{definition}[\cite{PP QG}, \cite{Pet QG}] \upshape
Let $f : U \to \mathbb R$ be a semiconcave function on an open subset $U$ of an Alexandrov space. 
A Lipschitz curve $\gamma : [0,a) \to X$ on an interval $[0,a)$ is said to be a {\it gradient curve on $U$} if 
for any $t \in [0,a)$ with $\gamma(t) \in U$, 
\[
\lim_{\varepsilon \to 0+} \frac{f \circ \gamma(t + \varepsilon) - f \circ \gamma(t)}{\varepsilon}
\]
exists and it is equals to $|\nabla f|^2 (\gamma(t))$.
\end{definition}
Note that if $f$ is critical at $\gamma(t)$, the gradient curve $\gamma$ for $f$ satisfies that $\gamma(t') = \gamma(t)$ for any $t' \ge t$.

The (multi-valued) logarithm map $\log_p : X \to T_p X$ is defined by if $x \neq p$, then $\log_p (x) = |px|\, \cdot\! \uparrow_p^x$, where $\uparrow_p^x$ is a direction of a geodesic $px$ and if $x = p$, then $\log_p(x) = o_p$.
If $\gamma$ is a gradient curve on $U$, then for $t$ with $\gamma(t) \in U$, the forward direction 
\[
\gamma^+(t) := \lim_{\varepsilon \to 0+} \frac{\log_{\gamma(t)} (\gamma(t + \varepsilon))}{\varepsilon} \in T_{\gamma(t)} X
\]
exists and it is equals to the gradient $\nabla f (\gamma(t))$.

\begin{proposition}[{\cite{KPT}, \cite{Pet semi}, \cite{Pet QG}, \cite{PP QG}}]
\label{gradient flow is Lipschitz}
Let $\gamma$ and $\eta$ be gradient curve starting from $x = \gamma(0)$ and $y = \eta(0)$ in an open subset $U$ for a $\lambda$-concave function $f : U \to \mathbb R$, we obtain 
\[
|\gamma(s) \eta(s)| \le e^{\lambda s} |x y|
\]
for every $s \ge 0$.
\end{proposition}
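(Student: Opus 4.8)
The plan is to establish an infinitesimal estimate for the function $\ell(s):=|\gamma(s)\eta(s)|$, namely
\[
\limsup_{\varepsilon\to 0+}\frac{\ell(s+\varepsilon)-\ell(s)}{\varepsilon}\ \le\ \lambda\,\ell(s),
\]
and then to integrate it by a Gr\"onwall comparison. The two ingredients are the first variation formula for the distance function in an Alexandrov space and the $\lambda$-concavity of $f$ along geodesics, tied together through property $(1)$ of the gradient in Definition \ref{def:semiconcave}.

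For the first variation step, recall from the discussion preceding the statement that along a gradient curve the forward direction exists at every parameter and equals the gradient, so $\gamma^+(s)=\nabla f(\gamma(s))$ and $\eta^+(s)=\nabla f(\eta(s))$. Fix $s$ with $\ell(s)>0$ and write $\ell(s+\varepsilon)-\ell(s)$ as the sum of the two increments obtained by moving one endpoint at a time; applying the single-point first variation formula (see \cite{Pet semi}, \cite{PP QG}, \cite{KPT}) to each increment, and using that $\gamma$ and $\eta$ are Lipschitz, we obtain, for some shortest geodesic between $\gamma(s)$ and $\eta(s)$,
\[
\limsup_{\varepsilon\to 0+}\frac{\ell(s+\varepsilon)-\ell(s)}{\varepsilon}\ \le\ -\langle\nabla f(\gamma(s)),\,\uparrow_{\gamma(s)}^{\eta(s)}\rangle-\langle\nabla f(\eta(s)),\,\uparrow_{\eta(s)}^{\gamma(s)}\rangle .
\]
At a parameter with $\ell(s)=0$ one has $\gamma(s)=\eta(s)$ and hence $\gamma^+(s)=\eta^+(s)$, so $\gamma(s+\varepsilon)$ and $\eta(s+\varepsilon)$ issue from the same point with directions tending to a common limit, giving $\ell(s+\varepsilon)=o(\varepsilon)$; thus the infinitesimal estimate is trivial there.

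To bound the right-hand side, let $c$ be the shortest unit-speed geodesic above, from $\gamma(s)$ to $\eta(s)$, of length $\ell(s)$ and with $c^+(0)=\uparrow_{\gamma(s)}^{\eta(s)}$. Since $f$ is $\lambda$-concave along $c$, the function $t\mapsto f(c(t))-\tfrac{\lambda}{2}t^2$ is concave, hence lies below its tangent line at $t=0$; evaluating at $t=\ell(s)$, and using $(f\circ c)'(0^+)=d_{\gamma(s)}f(\uparrow_{\gamma(s)}^{\eta(s)})$ followed by inequality $(1)$ of Definition \ref{def:semiconcave}, gives
\[
f(\eta(s))-f(\gamma(s))\ \le\ \ell(s)\,\langle\nabla f(\gamma(s)),\,\uparrow_{\gamma(s)}^{\eta(s)}\rangle+\tfrac{\lambda}{2}\ell(s)^2 .
\]
The same argument along the reversed geodesic yields the symmetric inequality with $\gamma$ and $\eta$ exchanged; adding the two and dividing by $\ell(s)>0$ gives
\[
-\langle\nabla f(\gamma(s)),\,\uparrow_{\gamma(s)}^{\eta(s)}\rangle-\langle\nabla f(\eta(s)),\,\uparrow_{\eta(s)}^{\gamma(s)}\rangle\ \le\ \lambda\,\ell(s),
\]
which together with the first variation inequality establishes the infinitesimal estimate for all $s$.

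Since $\gamma$ and $\eta$ are Lipschitz, $\ell$ is locally Lipschitz, so $s\mapsto e^{-\lambda s}\ell(s)$ is locally Lipschitz with everywhere nonpositive upper forward derivative and is therefore non-increasing; hence $e^{-\lambda s}\ell(s)\le\ell(0)$, that is $|\gamma(s)\eta(s)|\le e^{\lambda s}|xy|$, as claimed. I expect the only genuinely delicate point to be the first variation step: one needs the $\limsup$ form of the first variation formula for the distance between the two simultaneously moving points, applied along precisely the geodesic used in the concavity estimate, and this tacitly requires that geodesic to lie in the domain $U$ on which $f$ is $\lambda$-concave; this is exactly the technical input of \cite{Pet semi} and \cite{PP QG}.
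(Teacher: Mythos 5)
The paper itself gives no proof of this proposition; it is quoted as background from the references cited in its statement, so there is no internal argument to compare against. Your proof is exactly the standard one in those references: first variation of $\ell(s)=|\gamma(s)\eta(s)|$ with both endpoints moving, the $\lambda$-concavity of $f$ along a connecting geodesic combined with property (1) of Definition \ref{def:semiconcave} to get $-\langle\nabla f(\gamma(s)),\uparrow_{\gamma(s)}^{\eta(s)}\rangle-\langle\nabla f(\eta(s)),\uparrow_{\eta(s)}^{\gamma(s)}\rangle\le\lambda\ell(s)$, and then Gr\"onwall; this is correct, including your remark that the connecting geodesic must stay in the region where $f$ is $\lambda$-concave. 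The only step to tighten is the case $\ell(s)=0$: the assertion ``same starting point and forward directions tending to a common limit, hence $\ell(s+\varepsilon)=o(\varepsilon)$'' is true but not automatic, since a lower curvature bound by itself only gives lower bounds on distances; you need the angle comparison $\tilde\angle\,\gamma(s+\varepsilon)\,p\,\eta(s+\varepsilon)\le\angle(\uparrow_p^{\gamma(s+\varepsilon)},\uparrow_p^{\eta(s+\varepsilon)})\to 0$ together with $|p\,\gamma(s+\varepsilon)|,|p\,\eta(s+\varepsilon)|=O(\varepsilon)$ to turn agreement of directions into the upper bound $o(\varepsilon)$. Note also that you may not shortcut this point by invoking uniqueness of gradient curves, since the paper deduces uniqueness from this very proposition (you correctly refrained from doing so). Alternatively, the case $\ell=0$ can be bypassed entirely: apply the differential inequality only on the open set $\{\ell>0\}$; on a component $(s_0,s]$ of this set with $\ell(s_0)=0$, letting $t\downarrow s_0$ in $\ell(s)\le e^{\lambda(s-t)}\ell(t)$ forces $\ell(s)=0$, a contradiction, so $\ell$ remains zero after it first vanishes and the claimed estimate holds for all $s\ge 0$.
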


This proposition implies a gradient curve starting at $x \in U$ is unique on the domain of definition.

\begin{theorem}[{\cite{Pet QG}, \cite{Pet semi}, \cite{PP QG}}] \label{existence of gradient curve}
For any open subset $U$ of an Alexandrov space, a semiconcave function $f$ on $U$ and $x \in U$, there exists a unique maximal gradient curve
\[
\gamma : [0,a) \to U
\]
with $\gamma(0) = x$ for $f$, where $\gamma$ is maximal if for every gradient curve $\eta : [0, b) \to U$ for $f$ with $\eta(0) = x$, then $b \le a$.
\end{theorem}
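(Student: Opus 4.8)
\emph{Plan.} Uniqueness and the meaning of ``maximal'' are formal once one has local existence, so I would separate the two halves. First I would observe that if $\gamma\colon[0,b)\to U$ and $\eta\colon[0,b')\to U$ are gradient curves for $f$ with $\gamma(0)=\eta(0)=x$, then covering a compact subinterval of $[0,\min(b,b'))$ by finitely many balls on which $f$ is $\lambda$-concave and Lipschitz and applying Proposition~\ref{gradient flow is Lipschitz} on each gives $|\gamma(s)\eta(s)|\le e^{\lambda s}|xx|=0$; hence any two gradient curves from $x$ agree where both are defined. Granting local existence, I would then set $a=\sup\{\,b\ :\ \text{there is a gradient curve }[0,b)\to U\text{ from }x\,\}\in(0,\infty]$ and define $\gamma\colon[0,a)\to U$ to coincide on each $[0,b)$ with the corresponding curve; the agreement property makes this well defined, and $\gamma$ is by construction a gradient curve from $x$ that no other one exceeds, i.e.\ the maximal one. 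The interval is genuinely half-open: if $\gamma$ extended continuously to $t=a<\infty$ with $\gamma(a)\in U$, local existence at $\gamma(a)$ would yield a strictly longer gradient curve, so when $a<\infty$ the curve must leave every compact subset of $U$.

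\emph{Local existence.} The real work is to produce, for a fixed $x$, a gradient curve on some $[0,\varepsilon)$. I would pick $r>0$, $\lambda\in\mathbb R$, $L>0$ with $\overline{U(x,2r)}\subset U$ and $f$ being $\lambda$-concave and $L$-Lipschitz on $U(x,2r)$, so that $|\nabla_p f|\le L$ there, and construct the curve on $[0,\varepsilon]$ with $\varepsilon=r/L$ as a limit of discrete approximations. One option is the explicit broken-geodesic (Euler) scheme with step $\tau$: $y_0=x$, $y_{k+1}=\exp_{y_k}(\tau\,\xi_k)$ for a geodesic direction $\xi_k$ at $y_k$ nearly realizing $\nabla_{y_k}f$, interpolated geodesically; the option I would actually prefer, because it uses only lower semicontinuity of $f$, is the implicit minimizing-movement scheme $y_{k+1}\in\arg\min_{y}\bigl(-f(y)+\tfrac1{2\tau}|y_k\,y|^2\bigr)$, whose minimizers exist since $\overline{U(x,2r)}$ is compact. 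In either case the polygonal interpolants $\gamma_\tau$ have metric speed bounded by a multiple of $L$, hence stay in $U(x,2r)$ for $t\in[0,\varepsilon]$ and are uniformly Lipschitz, so by Arzel\`a--Ascoli a subsequence $\gamma_{\tau_i}$ converges uniformly to a Lipschitz curve $\gamma$ with $\gamma(0)=x$.

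\emph{Identifying the limit.} To finish I must check that for every $t$ with $\gamma(t)\in U$ the one-sided limit $\lim_{\varepsilon'\to0+}\bigl(f(\gamma(t+\varepsilon'))-f(\gamma(t))\bigr)/\varepsilon'$ exists and equals $|\nabla f|^2(\gamma(t))$; the remark preceding Proposition~\ref{gradient flow is Lipschitz} would then supply $\gamma^+(t)=\nabla_{\gamma(t)}f$ for free. I would use the first-variation inequality for the $\lambda$-concave $f$, valid at a.e.\ point of any Lipschitz curve $\sigma$,
\[
\tfrac{d^+}{dt}f(\sigma(t))\le d_{\sigma(t)}f(\sigma^+(t))\le\langle\sigma^+(t),\nabla_{\sigma(t)}f\rangle\le|\sigma^+(t)|\,|\nabla_{\sigma(t)}f|,
\]
together with the lower semicontinuity of $p\mapsto|\nabla_p f|$. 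Passing to the limit in the discrete energy/velocity estimates for $\gamma_{\tau_i}$ gives the lower bound $f(\gamma(t'))-f(\gamma(t))\ge\int_t^{t'}|\nabla_{\gamma(s)}f|^2\,ds$ and the speed control $|\gamma^+(s)|\le|\nabla_{\gamma(s)}f|$, while integrating the first-variation inequality gives the matching upper bound; equality forces $\langle\gamma^+(s),\nabla_{\gamma(s)}f\rangle=|\gamma^+(s)|\,|\nabla_{\gamma(s)}f|=|\nabla_{\gamma(s)}f|^2$ for a.e.\ $s$, which by the characterization of $\nabla f$ in Definition~\ref{def:semiconcave} pins down $\gamma^+(s)=\nabla_{\gamma(s)}f$ a.e., and the monotonicity of $s\mapsto e^{-\lambda s}|\nabla_{\gamma(s)}f|$ along the flow upgrades this to every $s$ and yields the one-sided derivative statement. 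This produces a gradient curve on $[0,\varepsilon)$, and the gluing above then gives the maximal curve.

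\emph{Main obstacle.} I expect the hard part to be the last step: controlling the directions in the limit --- that the near-gradient (or minimizing-movement) steps converge, in the tangent cones that vary along the curve, to the genuine gradient, with no speed lost so that $|\gamma^+|=|\nabla f|$ rather than merely $\le$ --- and then promoting the almost-everywhere identity to an everywhere statement, which is what the definition of a gradient curve demands. This is precisely where the Alexandrov geometry (first variation for semiconcave functions, behaviour of $|\nabla f|$ along the flow) is needed; the uniqueness, the Lipschitz a priori bounds, the compactness, and the gluing are soft.
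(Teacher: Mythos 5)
The paper offers no proof of this statement: it is quoted verbatim from the cited sources (Perelman--Petrunin's preprint and Petrunin's survey/thesis), so there is no internal argument to compare yours against. Judged on its own, your skeleton --- uniqueness and maximality are soft consequences of the contraction estimate and gluing; the content is local existence via a discrete scheme plus identification of the limit --- is the standard route taken in those references (Euler broken geodesics) and in the Ambrosio--Gigli--Savar\'e minimizing-movement framework, and you correctly locate the hard step.

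Two points in the identification step are genuine gaps as written. First, the chain $\tfrac{d^+}{dt}f(\sigma(t))\le d_{\sigma(t)}f(\sigma^+(t))\le\langle\sigma^+(t),\nabla_{\sigma(t)}f\rangle$ ``for a.e.\ $t$ along any Lipschitz curve'' presupposes that the right direction $\sigma^+(t)\in T_{\sigma(t)}X$ exists a.e.; for an arbitrary Lipschitz curve in an Alexandrov space only the metric speed is guaranteed a.e., not a direction in the varying tangent cones. The repair is to run the maximal-slope argument purely metrically, using that $|\nabla f|$ is an upper gradient ($\tfrac{d}{dt}f(\sigma(t))\le|\dot\sigma|(t)\,|\nabla f|(\sigma(t))$), and to recover the existence of $\gamma^+(t)$ and the identity $\gamma^+(t)=\nabla_{\gamma(t)}f$ afterwards from the first-variation formula --- which is exactly where Perelman--Petrunin instead prove the distance estimate of Proposition~\ref{gradient flow is Lipschitz} directly for the broken-geodesic approximants and read off the direction at every point. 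Second, your upgrade from a.e.\ to everywhere leans on the monotonicity of $s\mapsto e^{-\lambda s}|\nabla_{\gamma(s)}f|$ ``along the flow'', but that monotonicity is itself normally derived from $\gamma$ being a gradient curve, so the argument is circular unless you establish it for the discrete approximants and pass to the limit, or instead use lower semicontinuity of $|\nabla f|$ together with the integrated energy identity to compute the right derivative of $f\circ\gamma$ at \emph{every} $t$ (which is what the paper's definition of a gradient curve demands). Neither gap is fatal --- both are closed in the cited literature --- but as stated the proof of the key step is incomplete.
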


\begin{definition}[\cite{PP QG}, \cite{Pet QG}] \upshape
Let $U$ be an open subset of an Alexandrov space $X$ and $f : U \to \mathbb R$ is semiconcave function.
Let $\{[0,a_x)\}_{x \in U}$ be a family of intervals for $a_x > 0$.
A map 
\[
\Phi : \bigcup_{x \in U} \{x\} \times [0,a_x) \to U
\]
is a {\it gradient flow} of $f$ on $U$ (with respect to $\{[0,a_x)\}_{x \in U}$) if 
for every $x \in U$, $\Phi(x,0) = x$ and the restriction 
\[
\Phi(x, \cdot) : [0,a_x) \to U
\]
is gradient curve of $f$ on $U$.

A gradient flow $\Phi$ is {\it maximal} if each domain $[0,a_x)$ of the gradient curve is maximal. 
\end{definition}
By Theorem \ref{existence of gradient curve} and Proposition \ref{gradient flow is Lipschitz}, a maximal gradient flow on $U$ always uniquely exists.

Let $\Phi$ be a gradient flow of a semiconcave function on an open subset $U$. 
By a standard argument, we obtain 
\[
\Phi(x, s + t) = \Phi(\Phi(x,s),t)
\]
for every $x \in U$ and $s, t \ge 0$, whenever the formula has the meaning.

\section{Proof of Theorem \ref{Alex sp is SLLC}} \label{proof of main theorem}

The purpose of this section is to prove Theorem \ref{Alex sp is SLLC}.
Let us fix a finite dimensional Alexandrov space $X$. %%%of curvature $\ge -1$.
As see before in Section \ref{sec:infinite dim}, this construction makes a sense for an infinite dimensional Alexandrov space with additional assumption.

We first prove the following.
Let us consider the distance function $f = d(S(p, R), \cdot)$ from a metric sphere $S(p, R) = \{q \in X \,\big|\, |pq| = R\}$.
We may assume that a neighborhood of $p$ has curvature $\ge -1$ by rescaling the metric of $X$ if necessary.
By $B(p,R)$, we denote the closed ball centered at $p$ of radius $R$.

\begin{proposition} \label{distance function from boundary}
For any $p \in X$ and $\varepsilon > 0$, there exists $R > 0$ and $\delta_0 = \delta_0(\varepsilon, R) > 0$ such that the distance function 
\[
f = d(S(p, R), \cdot)
\]
from the metric sphere $S(p, R)$ satisfies that for every $x \in B(p, \delta_0 R) - \{p\}$, 
\begin{equation} \label{important eq}
d_x f (\uparrow_x^p) > \cos \varepsilon.
\end{equation}
In particular, $f$ is regular on $B(p, \delta_0 R) - \{p\}$.
%%%% Here, $\tau(\delta)$ is a positive constant depending on $\delta$ satisfying $\lim_{\delta \to 0} \tau(\delta) = 0$.
\end{proposition}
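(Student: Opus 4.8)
\smallskip
\noindent\textbf{Proof strategy.} The plan is a first-variation argument together with $\mathbb{M}_{-1}$-comparison; the crux is that, at every small scale, balls about $p$ look like the tangent cone $T_pX$, on which the distance from a metric sphere centred at the apex is exactly the radial function. First I would rescale so that a neighbourhood of $p$ has curvature $\ge-1$, fix a small $R>0$ with $\overline{B(p,2R)}$ of curvature $\ge-1$ (to be shrunk below), and note that, closed balls being compact, $S(p,R)$ is nonempty and compact, so $f=d(S(p,R),\cdot)$ has a foot at every point. By Lemma \ref{1-kf concave} applied to $A=S(p,R)$ with $\kappa=-1$, and because $f(x)\ge R-|px|\ge R(1-\delta_0)$ on $B(p,\delta_0R)$, the function $f$ is $\coth(R(1-\delta_0))$-concave there; in particular it is $1$-Lipschitz and its differential exists on $B(p,\delta_0R)\setminus\{p\}$.

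\smallskip
Next, the reduction. Fix $x\in B(p,\delta_0R)\setminus\{p\}$, put $t=|px|$, and let $q$ be a foot of $x$, so $|xq|=f(x)$ and $|pq|=R$. Since $f(x)\ge R-t$, an elementary computation with the hyperbolic law of cosines gives a universal $c>0$ such that, provided $t$ is small enough relative to $R$ (to absorb the semiconcavity term),
\[
f(x)\le R-t\,(1-c\,\varepsilon^2)\ \Longrightarrow\ \tilde\angle_{-1}(p\,x\,q)\ \ge\ \pi-\tfrac{\varepsilon}{2},
\]
$\tilde\angle_{-1}(p\,x\,q)$ being the angle at the vertex corresponding to $x$ in the comparison triangle $\tilde\triangle\,pxq\subset\mathbb{M}_{-1}$. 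By curvature $\ge-1$, every geodesic $xq$ then makes angle $\ge\pi-\varepsilon/2$ with $\uparrow_x^p$. Moving $x$ toward $p$ along $xp$ to $x_s$ and writing $f(x_s)=\min_{q'\in S(p,R)}|x_sq'|$, only points $q'$ arbitrarily near feet of $x$ contribute to the minimum as $s\to0+$; for such $q'$, the hinge comparison at $x$ (between $\uparrow_x^p$ and a geodesic toward $q'$) together with lower semicontinuity of the relevant angle as $q'\to$(a foot) yields $|x_sq'|\ge f(x)+s\cos(\varepsilon/2)-o(s)$, hence $d_xf(\uparrow_x^p)\ge\cos(\varepsilon/2)>\cos\varepsilon$. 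So everything comes down to arranging $f(x)\le R-|px|(1-c\varepsilon^2)$ on $B(p,\delta_0R)\setminus\{p\}$, for which it is more than enough to show $f(x)=R-|px|+o(|px|)$ uniformly as $x\to p$.

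\smallskip
For this metric estimate I see two routes. The cleaner one would be to show $f\equiv R-d(p,\cdot)$ on $B(p,R)$ for $R$ small: it suffices that, for $R$ below a threshold depending on $p$, every minimizing geodesic from $p$ remains minimizing up to length $\ge R$; then each $x\in B(p,R)\setminus\{p\}$ lies on a minimizing geodesic from $p$ of length $R$, whose endpoint $q\in S(p,R)$ has $|xq|=R-|px|$, forcing $f(x)=R-|px|$. Lacking a convenient reference for that extendability, the second route proves the weaker statement by contradiction and a blow-up at scale $|px|$: from a bad sequence $x_i\to p$, $t_i=|px_i|$, with $f(x_i)-(R-t_i)\ge c\varepsilon^2 t_i$, rescale by $1/t_i$ to get $(\tfrac1{t_i}X,p)\to(T_pX,o_p)$ with $x_i\to\xi\in\Sigma_p$ (compactness of $\Sigma_p$ in finite dimensions); there $S(p,R)$ recedes to infinity, and a foot $q_i$ of $x_i$, having $|px_i|+|x_iq_i|-|pq_i|=f(x_i)-(R-t_i)\ge c\varepsilon^2 t_i$, runs off (after rescaling) toward the ideal endpoint of a ray of $T_pX$ in some direction $\eta$ with $\cos\angle(\eta,\xi)\le 1-c\varepsilon^2<1$; but on the cone the Busemann function of a ray in direction $\zeta$ is $r\cos\angle(\zeta,\cdot)$, and minimality of $q_i$ against points of $S(p,R)$ in all directions (one again needs enough geodesics from $p$ to meet $S(p,R)$) forces $\eta$ to maximize $\cos\angle(\,\cdot\,,\xi)$ over $\Sigma_p$, i.e.\ $\eta=\xi$ — a contradiction. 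Either way, fixing $R$ and then $\delta_0$ small enough for the above, $d_xf(\uparrow_x^p)>\cos\varepsilon$ holds throughout $B(p,\delta_0R)\setminus\{p\}$; in particular $d_xf$ is positive somewhere, so $|\nabla_xf|>0$ and $f$ is regular there.

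\smallskip
The main obstacle is the metric estimate, and inside it the (near-)extendability of minimizing geodesics from $p$ — equivalently, a quantitative form of the radial star-shapedness of $S(p,R)$ about $p$. The remaining ingredients — the law-of-cosines estimate, the hinge-comparison and first-variation formulae, the semicontinuity of angles, and the handling of several geodesics joining $x$ to a foot — are routine.
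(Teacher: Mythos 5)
Your reduction (show that the comparison angle at $x$ between $p$ and a foot of $x$ on $S(p,R)$ is close to $\pi$, then conclude by first variation and hinge comparison) is exactly the skeleton of the paper's proof, and your identification of the crux is also correct: everything hinges on the upper bound $f(x)\le R-|px|(1-c\varepsilon^2)$, equivalently on having points of $S(p,R)$ in directions from $p$ that are $\varepsilon$-close to $\uparrow_p^x$. But that crux is precisely what you do not prove, and both of your proposed routes fail to supply it. Route (a) rests on the claim that every $x$ near $p$ lies on a minimizing geodesic from $p$ of length $R$; this is false in general (e.g.\ on the boundary of a convex body with a dense set of conical points, no geodesic passes through a conical point $x$, so $f(x)>R-|px|$ strictly and $f\equiv R-d(p,\cdot)$ fails), so it is not merely a missing reference. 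Route (b) is circular at the decisive moment: to contradict the foot $q_i$ lying in a direction $\eta\neq\xi$, you must produce a competitor point of $S(p,R)$ in a direction close to $\xi$ — which is exactly the density statement you are trying to establish; your own parenthetical ``one again needs enough geodesics from $p$ to meet $S(p,R)$'' concedes this, and the final paragraph leaves it as ``the main obstacle.''

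The paper closes this gap with a short argument you should be able to reconstruct, and it requires no extendability of the geodesic through the given $x$. Since $\Sigma_p$ is compact (finite dimension) and geodesic directions are dense in $\Sigma_p$ (equivalently, $T_pX=K(\Sigma_p)$ is the blow-up limit), one can choose a finite $\varepsilon$-net of $\Sigma_p$ consisting of genuine geodesic directions and then take $R$ smaller than the minimum of the (finitely many) lengths of these geodesics. Each net direction is then realized as $\uparrow_p^q$ for some $q\in S(p,R)$, so for every $v\in\Sigma_p$ there is $q\in S(p,R)$ with $\angle(v,\uparrow_p^q)\le\varepsilon$ (the paper's condition \eqref{eq 0}). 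Applying this to $v=\uparrow_p^x$ and using $\tilde\angle xpq\le\angle xpq\le\varepsilon$ together with the hyperbolic law of cosines yields $|xq|\le R-|px|(1-c\varepsilon^2)$ for $|px|\le\delta_0 R$, which is your metric estimate; the paper in fact works directly with the feet $q_1$, $q_3(y)$, noting that the foot minimizes $\tilde\angle xp\,\cdot$ over $S(p,R)$, and then runs the same comparison-plus-first-variation computation you outline. So the point you flagged as the obstacle is genuinely needed, but it is an easy consequence of compactness of $\Sigma_p$, not of extendability of geodesics from $p$; without supplying it, your argument is incomplete.
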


\begin{remark}\upshape
In \cite{Per Morse}, Perelman constructed a strictly concave function on a small neighborhood of each point by taking an average of composition of a strictly concave polynomial and distance functions. 
Kapovitch also constructed such a function in \cite{K reg} and \cite{K rest} based on Perelman's construction.
We explain their construction. 
For $\delta >0$, let $\phi$ be a second degree polynomial such that $\phi'' \equiv -1/\delta$, $\phi'$ is positive and bounded on small interval $[R-4 \delta, R+4 \delta]$.
Taking a sufficiently fine net $\{x_\alpha\}_{\alpha = 1}^N$ in $S(p, R)$ and much finner net $\{x_{\alpha \beta}\}_{\beta = 1}^{N_\alpha}$ near each $x_\alpha$, %%in $B(x_\alpha,\delta R) \cap S(p,R)$
we define 
\[
f_\alpha (x) = \frac{1}{N_\alpha} \sum_{\beta = 1}^{N_\alpha} \phi (d(x_{\alpha \beta},x)).
\]
They proved that $f_\alpha$ is $(-c/\delta)$-concave on $B(p,\delta)$ in the sense of Definition \ref{def:semiconcave}, where $c$ is a positive constant independent on $\delta$.
Here, we note that a $\lambda$-concave function is $(-\lambda)$-concave in their term.
Then, $f = \min_\alpha f_\alpha$ is also $(-c/\delta)$-concave.
In particular, it follows that any point has a convex neighborhood.
%%This function played an important role in \cite{K reg} and \cite{K rest}.
One can prove that $f$ has a unique maximum at $p$ in $B(p, \delta)$.
Immediately, by Proposition \ref{gradient flow is Lipschitz}, the gradient flow $\Phi$ of $f$ satisfies $d(\Phi(x,t), p) \le e^{-t c/\delta} d(x,p)$ for any $x \in B(p,\delta)$.
Hence, the flow curve $\Phi(x,t)$ is tending to $p$ as $t \to \infty$.
However, we do not know whether $\Phi(x,\cdot)$ reach $p$ at finite time. %%%for some $x \in B(p,\delta)$ and 
In particular, we do not know whether $d f(\uparrow_x^p)$ has a uniform positive lower bound in $B(p,\delta) - \{p\}$ as the conclusion of Proposition \ref{distance function from boundary}.
Proposition \ref{distance function from boundary} is a key in our paper, which implies an important Lemma \ref{upper bound of time} later.
\end{remark}

\begin{proof}[Proof of Proposition \ref{distance function from boundary}]
Since the tangent cone $T_p X$ is isometric to the metric cone $K(\Sigma_p)$ over the space of directions $\Sigma_p$, there exists a positive constant $R$ satisfying the following.
\begin{equation} \label{eq 0}
\text{For any } v \in \Sigma_p,\! \text{ there is } q \in S(p, R) \text{ such that } \angle(v, \uparrow_p^q) \le \varepsilon.
\end{equation}
From now on, we set $S := S(p, R)$.
For any $x \in S(p, \delta R)$, fixing a direction $\uparrow_p^x\,\, \in x'_p$, let us take $q_1, q_2 \in S$ such that 
\begin{align}
|x, q_1| &= |x, S| := \min_{q \in S} |x, q|; \label{eq q_1} \\
\angle x p q_2 &= \angle (\uparrow_p^x, \uparrow_p^{q_2}) = \angle (\uparrow_p^x, S_p') := \min_{v \in S_p'} \angle(\uparrow_p^x, v). \label{eq q_2}
\end{align}
By the condition \eqref{eq 0}, we have 
\[
\tilde \angle x p q_2 \le \angle x p q_2 \le \varepsilon.
\]
Then, by the law of sine, we obtain 
\begin{equation} \label{eq 1}
\begin{aligned}
\sin \tilde \angle p x q_2 &= \frac{\sinh R}{\sinh |x q_2|} \sin \tilde \angle x p q_2 
%%%% \\ &
\le \frac{\sinh R}{\sinh R (1-\delta)} \sin \varepsilon. 
%%%% \le \frac{1+R}{1-\delta} \sin \varepsilon.
\end{aligned}
\end{equation}
On the other hand, by the law of cosine,
%%%% and the Alexandrov convexity, 
we obtain
\begin{align*}
\cosh |x q_2| &= \cosh \delta R \cosh R - \sinh \delta R \sinh R \cos \tilde \angle x p q_2 \\
& \le \cosh \delta R \cosh R - \sinh \delta R \sinh R \cos \varepsilon 
%%%% \\
%%%% & = \cosh (R - \delta R) + \sinh R \sinh \delta R (1 - \cos \varepsilon)
\end{align*}
and 
\begin{align*}
& - \sinh \delta R \sinh |x q_2| \cos \tilde \angle p x q_2 
 = \cosh R - \cosh \delta R \cosh |x q_2| \\
%%%% & \hspace{12pt} \ge \cosh R - \cosh \delta R \{ \cosh (R - \delta R) + \sinh R \sinh \delta R (1 - \cos \varepsilon) \}.
& \hspace{16pt} \ge \cosh R \{1 - \cosh^2 \delta R \} + \sinh R \sinh \delta R \cos \varepsilon.
\end{align*}
Therefore, if $\delta$ is smaller than some constant, 
%%%% with respect to $R$, 
%%%% and $\varepsilon$, 
then 
\begin{equation} \label{eq 2}
- \cos \tilde \angle p x q_2 > 0.
\end{equation}
By \eqref{eq 1} and \eqref{eq 2}, we obtain 
\begin{equation} \label{p x q_2}
\tilde \angle p x q_2 \ge \pi - (1+\tau(\delta)) \varepsilon.
\end{equation}

Next, let us consider the point $q_1$ taken as \eqref{eq q_1}.
Then, it satisfies 
\[
\tilde \angle x p q_1 = \min_{q \in S} \tilde \angle x p q \le \min_{q \in S} \angle x p q \le \varepsilon.
\]
By a similar argument as $q_1$ instead of $q_2$, we obtain 
\begin{equation} \label{p x q_1}
\tilde \angle p x q_1 \ge \pi - (1+\tau(\delta)) \varepsilon.
\end{equation}

By the quadruple condition with \eqref{p x q_2} and \eqref{p x q_1}, we obtain 
\[
%%%\tilde \angle q_1 x q_2 \le \angle q_1 x q_2 \le 2 \pi - \angle p x q_1 - \angle p x q_2 \le (2 + \tau(\delta)) \varepsilon.
\tilde \angle q_1 x q_2 \le 2 \pi - \tilde \angle p x q_1 - \tilde \angle p x q_2 \le (2 + \tau(\delta)) \varepsilon.
\]
If $\delta$ is small with respect to $\varepsilon$, then we obtain 
\[
|q_1 q_2| \le 3 R \varepsilon.
\]
Therefore, we obtain 
\begin{equation} \label{q_1 p q_2}
\tilde \angle q_1 p q_2 \le 4 \varepsilon.
\end{equation}

%%We prove that $f = d(S, \cdot)$ is $(1 - C \varepsilon)$-regular on $B(p, \delta R) - \{p\}$. Here, $C$ is some constant depending on $\delta$ and $p$. %%%% check ?????
For any $y \in p x - \{p, x \}$, we set $q_3 = q_3(y) \in S$ such that 
\[
|y, q_3| = |y, S|. %%%% |x, q_3| = |x, S|.
\]
By an argument as above, we obtain 
\begin{equation} \label{p y q_3}
\tilde \angle p y q_3 \ge \pi - (1 + \tau (|p y| / R)) \varepsilon > \pi - 2 \varepsilon.
\end{equation}
Then, we have 
\[
\tilde \angle x y q_3 < 2 \varepsilon.
\]
By the Gauss-Bonnet's theorem, 
if $y$ is near $x$, then 
\[
\tilde \angle y x q_3 > \pi - 3 \varepsilon.
\]
% By Lemma \ref{first variation formula} later, we have
% \begin{align*}
% &\frac{\cosh |S y| - \cosh |S x|}{\sinh |x y|} \ge 
% \frac{\cosh |q_3 y| - \cosh |q_3 x|}{\sinh |x y|} \\
% &\ge 
% - \sinh |q_3 x| \cos \tilde \angle q_3 x y - |x y| \cosh |q_3 x| \\
% &\ge \sinh |S x| \cos (3 \varepsilon) - |x y| \cosh |q_3 x|
% \end{align*}
% Tending $y \to x$ along the geodesic $x p$, we obtain 
% \[
% \sinh |S x| \lim_{x p \ni y \to x} \frac{|S y| - |S x|}{|x y|} \ge \sinh |S x| \cos (3 \varepsilon).
% \]
% It follows 
% \[
% d f_x (\uparrow_x^p) = \lim_{x p \ni y \to x} \frac{|S y| - |S x|}{|x y|} \ge \cos 3 \varepsilon.
% \]
% This completes the proof.
By the first variation formula, we obtain 
\[
d f_x (\uparrow_x^p) = \lim_{x p \ni y \to x} \frac{|S y| - |S x|}{|x y|} \ge 
\liminf_{x p \ni y \to x} \frac{|q_3 y| - |q_3 x|}{|x y|} \ge \cos 3 \varepsilon.
\]
This completes the proof.
\end{proof}

% \begin{lemma} \label{first variation formula}
% For a triangle $\triangle q x y$ in the hyperbolic plane $\mathbb H^2$, setting $\theta = \angle q x y$, we obtain 
% \[
% \left| \frac{\cosh |q y| - \cosh |q x|}{\sinh |x y|} + \sinh |q x| \cos \theta \right| \le |x y| \cosh |q x|.
% \]
% \end{lemma}
% \begin{proof}
% From the cosine formula, we have 
% \begin{align*}
% &\left| \frac{\cosh |q y| - \cosh |q x|}{\sinh |x y|} + \sinh |q x| \cos \theta \right| 
% \\ 
% &\le (\cosh |q x| - 1) \frac{\cosh |x y| -1}{\sinh |x y|} + \frac{\cosh |x y| -1}{\sinh |x y|}.
% \end{align*}
% Since $\frac{\cosh t -1}{\sinh t} \le t$ for every $t \ge 0$, we obtain the conclusion.
% \end{proof}

%%%% Let us start the proof of Theorem \ref{Alex sp is SLLC}.

We fix $\delta_0$ as in the conclusion of Proposition \ref{distance function from boundary} and fix $\delta \le \delta_0$.
\begin{lemma}
For any $x \in B(p, \delta R) - \{p\}$, we have 
\[
\angle (\nabla_x f, \uparrow_x^p ) < \varepsilon 
\text{ and } |\nabla_x f, \uparrow_x^p \!| < \sqrt 2 \varepsilon.
\]
\end{lemma}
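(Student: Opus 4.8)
The plan is to read everything off Proposition~\ref{distance function from boundary} using the defining inequalities of the gradient vector in Definition~\ref{def:semiconcave} together with the fact that $f = d(S(p,R),\cdot)$ is $1$-Lipschitz. First, $1$-Lipschitzness of $f$ forces $d_x f(v) \le |v|$ for every $v \in T_x X$; applying this to $v = \nabla_x f$ and using $d_x f(\nabla_x f) = |\nabla_x f|^2$ gives $|\nabla_x f| \le 1$. Next, property~(1) of Definition~\ref{def:semiconcave} applied to the unit vector $\uparrow_x^p$ yields
\[
d_x f(\uparrow_x^p) \le \langle \uparrow_x^p, \nabla_x f\rangle = |\nabla_x f|\,\cos\angle(\uparrow_x^p,\nabla_x f),
\]
and combining with the estimate $d_x f(\uparrow_x^p) > \cos\varepsilon$ of Proposition~\ref{distance function from boundary} gives $\cos\varepsilon < |\nabla_x f|\,\cos\angle(\uparrow_x^p,\nabla_x f)$. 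Since both $|\nabla_x f| \le 1$ and $\cos\angle(\uparrow_x^p,\nabla_x f) \le 1$ (and the product is positive), each factor must itself exceed $\cos\varepsilon$; in particular $\cos\angle(\uparrow_x^p,\nabla_x f) > \cos\varepsilon$, i.e.\ $\angle(\nabla_x f,\uparrow_x^p) < \varepsilon$, which is the first assertion, and we also record $\cos\varepsilon < |\nabla_x f| \le 1$.

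For the distance estimate I would pass to $T_x X$, which is isometric to the metric cone $K(\Sigma_x X)$, so that the distance between $\nabla_x f$, of norm $\ell := |\nabla_x f| \in (\cos\varepsilon,1]$, and $\uparrow_x^p$, of norm $1$, is given by the cone formula
\[
|\nabla_x f,\uparrow_x^p|^2 = \ell^2 + 1 - 2\ell\cos\theta, \qquad \theta := \angle(\nabla_x f,\uparrow_x^p)\in[0,\varepsilon),
\]
the angle $\theta < \varepsilon < \pi$ so that the Euclidean cosine rule is the correct one. Viewing the right-hand side as a quadratic in $\ell$ on $[\cos\varepsilon,1]$, its critical point $\ell=\cos\theta$ is a minimum, so the maximum is attained at an endpoint: at $\ell=1$ the value is $2(1-\cos\theta)\le 2(1-\cos\varepsilon)$, and at $\ell=\cos\varepsilon$, using $\cos\theta\ge\cos\varepsilon$, the value is at most $1-\cos^2\varepsilon = (1-\cos\varepsilon)(1+\cos\varepsilon)\le 2(1-\cos\varepsilon)$. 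Hence $|\nabla_x f,\uparrow_x^p|^2 \le 2(1-\cos\varepsilon) \le \varepsilon^2$, so $|\nabla_x f,\uparrow_x^p| \le \varepsilon < \sqrt2\,\varepsilon$.

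There is no serious obstacle: the lemma is essentially a repackaging of Proposition~\ref{distance function from boundary}. The only points needing a little care are (i) remembering that the differential of a $1$-Lipschitz function is bounded in norm by $1$, which lets one discard the factor $|\nabla_x f|$ after bounding it above by $1$, and (ii) carrying out the cone-metric computation cleanly; since the target $\sqrt2\,\varepsilon$ is considerably weaker than what one gets, even a crude bound on $\ell^2+1-2\ell\cos\theta$ is enough, and the strict inequality $\theta<\varepsilon$ automatically produces a strict final bound.
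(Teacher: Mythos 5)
Your proposal is correct and follows essentially the same route as the paper: the angle bound and the bounds $\cos\varepsilon < |\nabla_x f| \le 1$ come from Proposition \ref{distance function from boundary} together with the defining inequality of the gradient and $1$-Lipschitzness of $f$, and the distance bound comes from the law of cosines in the cone $T_xX$. The only (harmless) difference is bookkeeping: the paper bounds the resulting quadratic by $2\sin^2\varepsilon$, while you maximize it over $\ell\in[\cos\varepsilon,1]$ to get $2(1-\cos\varepsilon)\le\varepsilon^2$, which is slightly sharper but yields the same conclusion $|\nabla_x f,\uparrow_x^p|<\sqrt2\,\varepsilon$.
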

\begin{proof}
By Proposition \ref{distance function from boundary}, we have 
\[
d f_x (\uparrow_x^p) > \cos \varepsilon. 
\]
By the definition of the gradient, we obtain 
\[
d f_x (\uparrow_x^p) \le %%%% \langle \nabla_x f , \uparrow_x^p \rangle =
|\nabla_x f| \cos \angle (\nabla_x f , \uparrow_x^p) \le \cos \angle (\nabla f, \uparrow_x^p).
\]
Therefore, we have $\angle (\nabla_x f, \uparrow_x^p) < \varepsilon$.

Since $f$ is $1$-Lipschitz, $|\nabla f| \le 1$. 
And, by the above inequality,  
\[
|\nabla f|_x = \max_{\xi \in \Sigma_x} d f_x (\xi) \ge d f (\uparrow_x^p) > \cos \varepsilon.
\]
Then, we obtain 
\[
|\nabla_x f, \uparrow_x^p|^2 
< |\nabla f|^2 + 1 - 2 |\nabla f| \cos \varepsilon 
\le 2 \sin^2 \varepsilon.
\]
Therefore, $|\nabla f, \uparrow_x^p| < \sqrt 2 \varepsilon$.
\end{proof}

Let us consider the gradient flow $\Phi_t$ of $f = d(S, \cdot)$.
\begin{lemma} \label{upper bound of time}
%%%% For $x \in B(p, \delta R)$, $\Phi_t (x) \in B(p, \delta R)$.
%%%% There exists an explicit constant $C > 0$ such that, 
For every $x \in B(p, \delta R)$, 
\[
|\Phi_t (x), p| \le |x, p| - \cos \varepsilon \cdot t,
\]
whenever this formula has meaning.
In particular, for any $t \ge \delta R / \cos \varepsilon$, we have $\Phi_t(x) = p$.
\end{lemma}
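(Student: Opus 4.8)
The plan is to control the quantity $\varphi(t):=d(p,\Phi_t(x))$ along the gradient flow by differentiating it and comparing the rate of decrease with the pointwise lower bound of Proposition~\ref{distance function from boundary}. Fix $x\in B(p,\delta R)$ and let $\gamma(t)=\Phi_t(x)$ be the gradient curve of $f=d(S,\cdot)$ issuing from $x$; recall that $f$ is semiconcave on $X\setminus S\supset B(p,\delta_0R)$ by Lemma~\ref{1-kf concave}, that $\gamma$ is locally Lipschitz, and that $\gamma^+(t)=\nabla_{\gamma(t)}f$. The distance $d_p=d(p,\cdot)$ is itself semiconcave on $X\setminus\{p\}$, so for a point $y=\gamma(t)\in B(p,\delta R)\setminus\{p\}$ the first variation formula yields
\[
\frac{d^+}{dt}\,\varphi(t)\;=\;d_y d_p(\nabla_y f)\;\le\;-\langle\nabla_y f,\ \uparrow_y^p\rangle\;\le\;-d_y f(\uparrow_y^p)\;<\;-\cos\varepsilon,
\]
where the first inequality is the elementary fact that moving in a direction $v$ decreases $d_p$ at least as fast as $\langle v,\uparrow_y^p\rangle$, the second is the defining inequality $(1)$ of the gradient in Definition~\ref{def:semiconcave}, and the last is \eqref{important eq} in Proposition~\ref{distance function from boundary} (applicable since $y\in B(p,\delta_0R)\setminus\{p\}$).

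Next I would bootstrap this into the global estimate. Since $\varphi$ is continuous with $\varphi^+\le-\cos\varepsilon<0$ on the (relatively open) set of times where $\gamma\neq p$, the curve $\gamma$ never leaves $B(p,\delta R)\subset B(p,\delta_0R)$, so the hypothesis of the previous step is self-sustaining; moreover $\gamma$ stays in a compact subset of $X\setminus S$, so the maximal gradient curve is defined for all $t\ge0$. Integrating the differential inequality over the maximal interval $[0,t_0)$ on which $\gamma$ avoids $p$ gives
\[
\varphi(t)\;\le\;\varphi(0)-\cos\varepsilon\cdot t\;=\;|x,p|-\cos\varepsilon\cdot t .
\]
Because $\varphi\ge0$, this forces $t_0\le|x,p|/\cos\varepsilon\le\delta R/\cos\varepsilon$, and by continuity $\varphi(t_0)=0$, i.e. $\gamma(t_0)=p$; thus the displayed inequality holds precisely for $0\le t\le t_0$, which is the content of the first assertion.

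Finally, to obtain $\Phi_t(x)=p$ for all $t\ge\delta R/\cos\varepsilon$, I would check that $f$ is critical at $p$. By \eqref{eq 0} every direction at $p$ is $\varepsilon$-close to some $\uparrow_p^q$ with $q\in S=S(p,R)$, and the hyperbolic law of cosines used in the proof of Proposition~\ref{distance function from boundary} then shows $f(y)<R=f(p)$ for every $y\neq p$ near $p$; hence $d_pf\le0$ on $\Sigma_p$ and $\nabla_pf=0$. Consequently the gradient curve stays at $p$ once it arrives there, and since $t_0\le\delta R/\cos\varepsilon$ we conclude $\Phi_t(x)=p$ for every $t\ge\delta R/\cos\varepsilon$. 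The one genuinely delicate point is the first variation identity $\varphi^+(t)=d_{\gamma(t)}d_p(\nabla_{\gamma(t)}f)$ along a gradient — not geodesic — curve; this is part of Petrunin's calculus of semiconcave functions (cf.\ \cite{Pet semi}), and uses that $\gamma^+(t)$ exists and that $d_p$ is $1$-Lipschitz. Everything else is routine bookkeeping about the lifespan of the gradient curve.
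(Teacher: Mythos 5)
Your proof is correct and takes essentially the same route as the paper's: differentiate $|\Phi_t(x),p|$ along the gradient curve, bound the forward derivative by $-\cos\varepsilon$ via the first variation formula, the defining inequality of the gradient, and Proposition \ref{distance function from boundary}, then integrate. The extra bookkeeping you supply (the curve staying in the ball, the lifespan of the flow, and criticality of $f$ at $p$ so the curve stays at $p$) consists of details the paper leaves implicit, and your treatment of them is sound.
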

\begin{proof}
Let us set $\gamma(t) = \Phi_t(x)$ the gradient curve for $f$ starting from $\gamma(0) = x$.
If $\gamma(t_0) \neq p$, then
\[
\left. \frac{d}{d t} \right|_{t = t_0+} |\Phi_t (x), p| = - \langle \nabla_{\gamma(t_0)} f, \uparrow_{\gamma(t_0)}^p \rangle < - \cos \varepsilon. %%%% \le -1 + \varepsilon^2.
\]
Integrating this, we have 
\[
|\Phi_{t_0} (x), p| - |x, p| \le -\cos \varepsilon \cdot t_0.
\]
This completes the proof.
\end{proof}

Finally, we estimates the Lipschitz constant of the flow $\Phi$ on $B(p, \delta R)$.
Let us recall that $f$ is $\lambda$-concave on $B(p, \delta R)$ for some $\lambda$.
By Lemma \ref{1-kf concave}, $\lambda$ can be given as follows.
\[
\frac{\cosh (f)}{\sinh (f)} \le \frac{\cosh R}{\sinh(R(1 - \delta))} = \lambda.
\]
By Proposition \ref{gradient flow is Lipschitz}, for any $x, y \in B(p, \delta R)$, 
\[
|\Phi(x,t), \Phi(y,t)| \le e^{\lambda t} |x y|.
\]
Since $f$ is $1$-Lipschitz, for $x \in B(p, \delta R)$ and $t' < t$, we have 
\[
|\Phi(x,t), \Phi(x,t')| \le \int_{t'}^t \left| \frac{d}{d s}^+ \Phi(x, s) \right| d s = \int_{t'}^t |\nabla f|(\Phi(x,s)) d s \le t -t'.
\]
Therefore, we obtain the following. 
\begin{lemma} \label{construction of Lipschitz homotopy}
For any $x, y \in B(p, \delta R)$ and $t \ge s \ge 0$,
\begin{align*}
|\Phi(x,s), \Phi(y,t)| %%%% &\le |\Phi(x,s), \Phi(y,s)| + |\Phi(y,s), \Phi(y,t)| \\
&\le e^{\lambda s} |x,y| + t -s .
\end{align*}
%%%In particular, $\Phi$ is $\sqrt{2} e^{\lambda \ell}$-Lipschitz on the product $B(p, \delta R) \times [0,\ell]$ equipped with the product metric.
\end{lemma}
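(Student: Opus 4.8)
The statement to prove is Lemma \ref{construction of Lipschitz homotopy}, which combines the two estimates already established:
\begin{itemize}
\item[(a)] $|\Phi(x,t),\Phi(y,t)| \le e^{\lambda t}|x,y|$ (Proposition \ref{gradient flow is Lipschitz}, since $f$ is $\lambda$-concave on $B(p,\delta R)$ with $\lambda = \cosh R/\sinh(R(1-\delta))$ by Lemma \ref{1-kf concave});
\item[(b)] $|\Phi(x,t),\Phi(x,t')| \le t - t'$ for $t \ge t' \ge 0$, because $f$ is $1$-Lipschitz and the gradient curve has speed $|\nabla f|(\Phi(x,s)) \le 1$.
\end{itemize}

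The plan is to interpolate via the intermediate point $\Phi(y,s)$ and apply the triangle inequality. First I would write
\[
|\Phi(x,s),\Phi(y,t)| \le |\Phi(x,s),\Phi(y,s)| + |\Phi(y,s),\Phi(y,t)|.
\]
For the first term, estimate (a) at time $s$ gives $|\Phi(x,s),\Phi(y,s)| \le e^{\lambda s}|x,y|$. For the second term, since $t \ge s \ge 0$ and $\Phi(y,\cdot)$ is a gradient curve on $B(p,\delta R)$, estimate (b) gives $|\Phi(y,s),\Phi(y,t)| \le t - s$. Adding these yields $|\Phi(x,s),\Phi(y,t)| \le e^{\lambda s}|x,y| + t - s$, which is exactly the claim.

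There is no real obstacle here: the two ingredients (a) and (b) are already proved in the lines immediately preceding the lemma statement, and the only remaining work is one application of the triangle inequality through a well-chosen intermediate point. The one small point to be careful about is that the intermediate point $\Phi(y,s)$ and the whole arc $\Phi(y,\cdot)|_{[s,t]}$ must remain in $B(p,\delta R)$ so that both (a) and (b) are legitimately applicable there; this is guaranteed because Lemma \ref{upper bound of time} shows $|\Phi_t(y),p| \le |y,p| \le \delta R$ is nonincreasing in $t$, so the flow never leaves $B(p,\delta R)$, and once it reaches $p$ it stays there (the gradient curve is constant after hitting a critical point, and $f$ is critical only at $p$ inside this ball). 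Hence all invocations are valid and the estimate follows.

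\begin{proof}
By Lemma \ref{1-kf concave}, $f = d(S,\cdot)$ is $\lambda$-concave on $B(p,\delta R)$ with $\lambda = \cosh R/\sinh(R(1-\delta))$; moreover, by Lemma \ref{upper bound of time} the quantity $|\Phi_t(x),p|$ is nonincreasing, so the gradient flow $\Phi$ maps $B(p,\delta R)$ into itself and the arcs below stay in $B(p,\delta R)$. Fix $x,y \in B(p,\delta R)$ and $t \ge s \ge 0$. By the triangle inequality,
\[
|\Phi(x,s),\Phi(y,t)| \le |\Phi(x,s),\Phi(y,s)| + |\Phi(y,s),\Phi(y,t)|.
\]
Applying Proposition \ref{gradient flow is Lipschitz} at time $s$ bounds the first term by $e^{\lambda s}|x,y|$. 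For the second term, since $f$ is $1$-Lipschitz and the forward speed of the gradient curve $\Phi(y,\cdot)$ equals $|\nabla f|(\Phi(y,\cdot)) \le 1$,
\[
|\Phi(y,s),\Phi(y,t)| \le \int_s^t \left| \frac{d}{d\sigma}^+ \Phi(y,\sigma) \right| d\sigma = \int_s^t |\nabla f|(\Phi(y,\sigma))\, d\sigma \le t - s.
\]
Combining the two estimates gives
\[
|\Phi(x,s),\Phi(y,t)| \le e^{\lambda s}|x,y| + t - s,
\]
as claimed.
\end{proof}
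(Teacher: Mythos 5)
Your proof is correct and follows the paper's route exactly: the paper establishes the two estimates $|\Phi(x,t),\Phi(y,t)|\le e^{\lambda t}|x,y|$ and $|\Phi(x,t'),\Phi(x,t)|\le t-t'$ in the lines just before the lemma and leaves the combination implicit, while you simply make explicit the triangle inequality through the correct intermediate point $\Phi(y,s)$ (which is what yields $e^{\lambda s}$ rather than $e^{\lambda t}$). Your added remark that the flow stays in $B(p,\delta R)$ by Lemma \ref{upper bound of time} is a sensible justification the paper takes for granted.
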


Note that, by Lemma \ref{upper bound of time}, setting $\ell = \delta_0 R / \cos \varepsilon$, $e^{\lambda \ell}$ can be bounded from above by a constant arbitrary close to $1$ if we choose $\delta_0$ and $R$ so small.

By Lemma \ref{construction of Lipschitz homotopy}, we obtain a Lipschitz homotopy 
\[
\varphi : B(p, \delta_0 R) \times [0, 1] \to B(p, \delta_0 R)
\]
with $\varphi(\cdot,1)= p$
%% and with the Lipschitz constant $\le \sqrt 2 e^{\lambda \ell}$, 
defined by $\varphi(x,t) = \Phi(x, \ell t)$ for $(x,t) \in B(p, \delta_0 R) \times [0,1]$. 

%%%______
\section{Proof of applications} \label{proof of applications}

\subsection{Proof of Corollaries \ref{Lipschitz homotopy class is homotopy class} and \ref{Lipschitz homotopy group is homotopy group}}
Let $V$ be a metric space and $U$ be a subset of $V$ and $p \in V$.
We say that $U$ is {\it Lipschitz contractible to $p$ in $V$} if there exists a Lipschitz map 
\[
h : U \times [0,1] \to V
\]
such that 
\[
h(x,0) = x \text{ and } h(x,1) = p
\]
for any $x \in U$.
We call such an $h$ a {\it Lipschitz contraction} from $U$ to $p$ in $V$.
We say that $U$ is {\it Lipschitz contractible in} $V$ if 
$U$ is Lipschitz contractible to some point in $V$.

%%%% The following lemma is easy.
\begin{lemma} \label{lemma}
Let $U$ be Lipschitz contractible in a metric space $V$.
For any Lipschitz map $\varphi : S^{n-1} \to U$, there exists a Lipschitz map $\tilde \varphi : D^n \to V$ such that $\tilde \varphi |_{ S^{n-1}} = \varphi$. 
%%Here, $ D^n$ is an $n$-dimensional unit closed ball with the Euclidean metric. 
\end{lemma}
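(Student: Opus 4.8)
The plan is to extend $\varphi$ over $D^n$ by a radial construction built from the given Lipschitz contraction, arranged so that the contraction is already completed by the time the radius drops to $\tfrac12$; the inner ball $B(0,\tfrac12)$ is then simply mapped to the contraction point. This truncation is exactly what prevents the loss of Lipschitz control that the naive cone $r\hat v\mapsto h(\varphi(\hat v),1-r)$ suffers as $r\to 0$.

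By hypothesis there are a point $p\in V$ and a Lipschitz map $h\colon U\times[0,1]\to V$ with $h(x,0)=x$ and $h(x,1)=p$ for all $x\in U$, and $d(h(a,s),h(b,t))\le C\bigl(d(a,b)+|s-t|\bigr)$ for some constant $C>0$. Writing a nonzero point of $D^n$ as $v=r\hat v$ with $r=|v|\in(0,1]$ and $\hat v=v/|v|\in S^{n-1}$, I would define
\[
\tilde\varphi(v)=
\begin{cases}
p,& |v|\le\tfrac12,\\
h\bigl(\varphi(v/|v|),\,2(1-|v|)\bigr),& |v|\ge\tfrac12 .
\end{cases}
\]
Both branches give $p$ when $|v|=\tfrac12$, so $\tilde\varphi$ is well defined, and $\tilde\varphi|_{S^{n-1}}=h(\varphi(\cdot),0)=\varphi$.

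It then remains to check that $\tilde\varphi$ is Lipschitz, which I would do in three cases, using only the identity $|r\hat v-\rho\hat w|^2=(r-\rho)^2+r\rho\,|\hat v-\hat w|^2$ for $\hat v,\hat w\in S^{n-1}$; this gives $|r\hat v-\rho\hat w|\ge|r-\rho|$ in general and $|r\hat v-\rho\hat w|\ge\tfrac12|\hat v-\hat w|$ when $r,\rho\ge\tfrac12$. If both points lie in $B(0,\tfrac12)$ the left side of the Lipschitz inequality vanishes. If both points $r\hat v$ and $\rho\hat w$ lie in the annulus $\{\tfrac12\le|v|\le1\}$, then
\[
d\bigl(\tilde\varphi(r\hat v),\tilde\varphi(\rho\hat w)\bigr)\le C\bigl(\Lip(\varphi)\,|\hat v-\hat w|+2|r-\rho|\bigr)\le 2C\bigl(\Lip(\varphi)+1\bigr)\,|r\hat v-\rho\hat w| .
\]
If one point $u$ lies in $B(0,\tfrac12)$ and the other is $\rho\hat w$ with $\rho\ge\tfrac12$, then since $h(\varphi(\hat w),1)=p$ and $|u-\rho\hat w|\ge\rho-\tfrac12$,
\[
d\bigl(\tilde\varphi(u),\tilde\varphi(\rho\hat w)\bigr)=d\bigl(p,h(\varphi(\hat w),2(1-\rho))\bigr)\le C(2\rho-1)\le 2C\,|u-\rho\hat w| .
\]
Hence $\tilde\varphi$ is Lipschitz, with constant at most $2C(\Lip(\varphi)+1)$.

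There is no serious obstacle; the only point requiring care is the annulus estimate, and the reason it goes through is precisely that collapsing $B(0,\tfrac12)$ keeps the radial factor — equivalently the coefficient multiplying the fixed constant $\Lip(\varphi)$, which does not decay with the radius — bounded below by $\tfrac12$, whereas without the truncation this coefficient would degenerate near the center.
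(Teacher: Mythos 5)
Your proof is correct and is essentially the paper's argument: the paper runs the homotopy $h\circ(\varphi\times\mathrm{id})$ on a cylinder $S^{n-1}\times[0,1]$ glued to a cap $D^n\times\{1\}$ that is collapsed to $p$, and then transports this to $D^n$ by a bi-Lipschitz homeomorphism, while your formula $\tilde\varphi(v)=h\bigl(\varphi(v/|v|),2(1-|v|)\bigr)$ on the annulus $\tfrac12\le|v|\le1$ with $\tilde\varphi\equiv p$ on $B(0,\tfrac12)$ is precisely that construction written out through an explicit choice of the bi-Lipschitz identification, with the Lipschitz estimates verified by hand.
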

\begin{proof}
By the definition, there exist $p \in V$ and a Lipschitz map 
\[
h : U \times [0,1] \to V
\]
such that 
\[
h(x,0) = x \text{ and } h(x,1) = p
\]
for any $x \in U$.
We define a map 
\[
\varphi_1 : S^{n-1} \times [0,1] \to V
\]
by $\varphi_1 = h \circ (\varphi \times \mathrm{id})$.
%%%% \[
%%%% \varphi_1( v, t ) = h(\varphi(v), t)
%%%% \]
%%%% for $v \in S^{n-1}$ and $t \in [0,1]$.
Then, $\varphi_1$ is Lipschitz with Lipschitz constant $\le \Lip\,(h) \cdot \max\{1, \Lip\,(\varphi)\}$.
We define a map 
\[
\varphi_2 : D^n \times \{1\} \to V
\]
by $\varphi_2 (v, 1) = p$ for all $v \in D^n$.
And we consider a space
\[
Y = S^{n-1} \times [0,1] \cup D^n \times \{1\}
\]
equipped with length metric with respect to a gluing $ S^{n-1} \times \{1\} \ni (v,1) \mapsto (v,1) \in \partial D^n \times \{1\}$.
Now we define a map 
\[
\varphi_3 : Y \to V
\]
by 
\[
\varphi_3 = \left\{
\begin{aligned}
&\varphi_1 \text{ on } S^{n-1} \times [0,1] \\
&\varphi_2 \text{ on } D^n \times \{1\}
\end{aligned}
\right.
\]
This is well-defined.
Then, $\varphi_3$ is $\Lip\, (\varphi_1)$-Lipschitz.
Indeed, for $x \in S^{n-1} \times [0,1]$ and $y \in D^n \times \{1\}$, we have
\begin{align*}
|\varphi_3(x), \varphi_3(y)| = |\varphi_3 (x), p|.
\end{align*}
Let $\bar x \in S^{n-1} \times \{1\}$ be the foot of perpendicular segment from $x$ to $ S^{n-1} \times \{1\}$. %%%% such that $|x,\bar x| = |x, S^{n-1} \times \{1\}|$.
We note that $|x, \bar x| \le |x, y|$ and $\varphi_3(\bar x) = p$.
Then, we obtain 
\begin{align*}
& |\varphi_3 (x), p| = |\varphi_3(x), \varphi_3(\bar x)| 
= |\varphi_1(x), \varphi_1(\bar x)| \\
& \hspace{45pt} \le \Lip\, (\varphi_1) |x, \bar x| \le \Lip\,(\varphi_1) |x, y|.
\end{align*}

Obviously, there exists a bi-Lipschitz homeomorphism 
\[
f : D^n \to Y
\]
with $f(0) = (0, 1) \in D^n \times \{1\}$
preserving the boundaries in the sense that it satisfies $f(v) = (v, 0) \in S^{n-1} \times \{0\}$ for any $v \in S^{n-1}$.
Then, we obtain a Lipschitz map $\tilde \varphi := \varphi_3 \circ f$ satisfying the desired condition.
\end{proof}

\begin{definition} \upshape
We say that a metric space $Y$ is a {\it Lipschitz simplicial complex} if there exists a triangulation $T$ of $Y$ satisfying the following. 
For each simplex $S \in T$, there exists a bi-Lipschitz homeomorphism $\varphi_S : \triangle^{\dim S} \to S$. 
Here, the simplex $\triangle^{\dim S}$ is a standard simplex equipped with the Euclidean metric and $S$ has the restricted metric of $Y$.
And, we say that such a triangulation $T$ is a {\it Lipschitz triangulation} of $Y$.
The dimension of $Y$ is given by $\dim Y = \sup_{S \in T} \dim S$.
We only deal with $Y$ of $\dim Y < \infty$.

A Lipschitz simplicial complex $Y$ is called {\it finite} if it has a Lipschitz triangulation consisting of finitely many elements.
%%%% If $A$ is a subcomplex of $Y$ with respect to some Lipschitz triangulation, then $(Y, A)$ is called a {\it Lipschitz simplicial complex pair}.
\end{definition}

%%%% We do not assume that $Y$ is locally finite.
Note that a subdivision, for instance, the barycentric one, of a Lipschitz triangulation is also a Lipschitz triangulation. 

%%%% \newpage
\begin{proposition} \label{HL} %%%% homotopic to Lipschitz map
Let $X$ be an SLLC space, $Y$ be a Lipschitz simplicial complex and $f : Y \to X$ be a continuous map.
%%%% Setting an open subset $U_E$ of $X$ with $f(E) \subset U_E$ for any simplex $E \in T$, 
Then, there exists a homotopy 
\[
h : Y \times [0,1] \to X
\]
from $h_0 = f$ such that $h_1$ is Lipschitz on each simplex of $Y$.

Further, if $f$ is Lipschitz on a subcomplex $A$ of $Y$, then a homotopy $h$ can be chosen so that it is relative to $A$. 
Namely, it satisfies $h(a,t) = a$ for any $a \in A$ and $t \in [0,1]$.
\end{proposition}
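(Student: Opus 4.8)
The plan is to prove Proposition \ref{HL} by an induction over the skeleta of a (possibly subdivided) Lipschitz triangulation $T$ of $Y$, deforming $f$ to a map that is Lipschitz on each simplex, one dimension at a time, while keeping track of the subcomplex $A$ on which $f$ is already Lipschitz.

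First I would fix a Lipschitz triangulation $T$ of $Y$ and refine it by repeated barycentric subdivisions so that every (closed) simplex of $T$ that is not contained in $A$ is mapped by $f$ into a Lipschitz contractible ball $U(p_S, r_S)$ of $X$; this is possible by uniform continuity of $f$ on the compact set $Y$ together with the SLLC property (each point of $X$ has such a ball, and by the last clause of Definition \ref{def of SLLC} the ball can be taken of arbitrarily small radius). I also arrange, using the ``for every $r' < r$'' clause, that the chosen contraction $h_S$ on $U(p_S,r_S)$ maps each smaller concentric ball into itself, so that compositions of such contractions stay inside controlled sets. Denote by $Y^{(k)}$ the $k$-skeleton of $T$ and set $Y^{(-1)} = \emptyset$. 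I will construct homotopies $h^{(k)} : Y \times [0,1] \to X$ inductively so that $h^{(k)}_0 = f$, $h^{(k)}_1$ is Lipschitz on each simplex of $Y^{(k)} \cup A$, and the homotopy is stationary on $A$ and on all simplices where the map is already Lipschitz.

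For the inductive step, suppose $g := h^{(k-1)}_1$ is Lipschitz on $Y^{(k-1)} \cup A$. Take a $k$-simplex $\sigma$ of $T$ not contained in $A$; identify it via its bi-Lipschitz chart with the standard simplex $\triangle^k$, whose boundary $\partial \sigma \cong S^{k-1}$ is already mapped Lipschitzly by $g$ into the contractible ball $U = U(p_\sigma, r_\sigma)$. By Lemma \ref{lemma} (applied with $V = U$, which is Lipschitz contractible to $p_\sigma$ in itself via $h_\sigma$) there is a Lipschitz extension $\widetilde{\varphi}_\sigma : D^k \to U$ of $g|_{\partial\sigma}$. Now I homotope $g|_\sigma$ to $\widetilde{\varphi}_\sigma$ rel $\partial\sigma$: both maps send $\partial\sigma$ to the same curve and both land in the contractible ball $U$, so a straight-line-type homotopy through the contraction $h_\sigma$ — explicitly, first contract $g|_\sigma$ to $p_\sigma$, then run $\widetilde\varphi_\sigma$-contraction backwards, reparametrised to be stationary on $\partial\sigma$ by using a collar of $\partial\sigma$ in $\sigma$ — does the job and is continuous on $\sigma$. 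Performing this over all $k$-simplices not in $A$ simultaneously (consistently on shared lower-dimensional faces, which requires the homotopies to be rel the $(k-1)$-skeleton, hence the collar reparametrisation), and concatenating with $h^{(k-1)}$, yields $h^{(k)}$. After finitely many steps ($k = 0, 1, \dots, \dim Y$) we obtain $h := h^{(\dim Y)}$ with $h_1$ Lipschitz on every simplex, stationary on $A$ throughout; finally concatenate all the stagewise homotopies into a single $h : Y \times [0,1] \to X$ (a finite concatenation of continuous homotopies is continuous, and rel $A$ is preserved).

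The main obstacle I expect is the bookkeeping needed to make the stagewise homotopies genuinely \emph{relative} to the lower skeleton and to $A$, so that they glue to a well-defined continuous map on all of $Y$: each $k$-simplex is handled in its own contractible ball, but adjacent simplices share faces, and the homotopy on $\sigma$ must not move points of $\partial\sigma$ (where $g$ is already Lipschitz and must stay so). This is exactly why one homotopes rel $\partial\sigma$ rather than freely, and why a collar neighbourhood of $\partial\sigma$ in $\sigma$ is used to taper the contraction to the identity near the boundary; verifying that the resulting map is Lipschitz on $\sigma$ (not merely continuous) uses that $\widetilde\varphi_\sigma$ is Lipschitz, that $h_\sigma$ is Lipschitz, and that the collar reparametrisation is bi-Lipschitz. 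A secondary technical point is ensuring the preliminary subdivision can be chosen to refine $A$ as a subcomplex while making all non-$A$ simplices small in the image — standard, but it should be stated. Everything else (finiteness of the induction, compactness giving uniform moduli, the semigroup-free concatenation of homotopies) is routine.
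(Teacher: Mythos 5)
Your overall strategy --- skeletal induction, deforming $f$ simplex by simplex inside small Lipschitz contractible balls and producing the Lipschitz map on each top cell by the cone-type extension of Lemma \ref{lemma} --- is the same as the paper's, but there is a genuine gap at the point where the induction is supposed to close. You assert that after the stage-$(k-1)$ deformation the map $g=h^{(k-1)}_1$ still sends $\partial\sigma$ into the ball $U=U(p_\sigma,r_\sigma)$ chosen for the $k$-simplex $\sigma$. Nothing in your inductive hypothesis gives this: the faces $F\subset\partial\sigma$ were deformed inside balls $U(p_F,r_F)$ centred at \emph{other} points, and those balls need not be contained in $U(p_\sigma,r_\sigma)$. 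Your proposed safeguard (that each contraction maps smaller \emph{concentric} balls into themselves, via the last clause of Definition \ref{def of SLLC}) does not help, because $U(p_\sigma,r_\sigma)$ is not concentric with $U(p_F,r_F)$. Without $g(\partial\sigma)\subset U$ you can neither apply Lemma \ref{lemma} with $V=U$ nor run your ``contract, then reverse-contract'' homotopy inside $U$. The paper repairs exactly this by strengthening the inductive statement (this is the role of Corollary \ref{HL1}): to each lower-dimensional simplex $F$ it assigns $U_F:=\bigcap_{F\subset E}U_E$ and demands $h(F\times[0,1])\subset U_F$ throughout the deformation, so that the deformed boundary of every top simplex $E$ automatically stays in $U_E$. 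You must build an analogous containment condition into your $h^{(k)}$'s; as stated, your induction hypothesis is too weak.

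Secondary points: (i) your $h^{(k)}$ is claimed on all of $Y\times[0,1]$ but is only constructed on the $k$-skeleton; you need the homotopy extension property for $(E,\partial E)$ (used explicitly in the paper, with values in $U_E$) to extend the boundary homotopy over each simplex before doing the rel-$\partial E$ correction. (ii) Proposition \ref{HL} does not assume $Y$ finite, so ``uniform continuity of $f$ on the compact set $Y$'' is unavailable; the paper avoids this by subdividing each standard simplex $\triangle^k$ (compact) separately inside the dimension induction. (iii) Your collar-tapered explicit homotopy is more delicate than needed and leaves the Lipschitzness of the time-one map in doubt; once you have the Lipschitz extension $\tilde\varphi_\sigma$ agreeing with $g$ on $\partial\sigma$, with both maps landing in the contractible ball $U_E$, a homotopy rel $\partial\sigma$ between them exists by pure topology (extend over $\sigma\times[0,1]$ the map prescribed on $\sigma\times\{0,1\}\cup\partial\sigma\times[0,1]$, using contractibility of $U_E$); only the time-one map, which is then exactly $\tilde\varphi_\sigma$, needs to be Lipschitz.
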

\begin{proof}
If $\dim Y = 0$, then we set $h(x, t) = f(x)$ for $x \in Y$ and $t \in [0,1]$.
Then, $h$ is the desired homotopy.

We assume that the assertion holds for $\dim Y \le k-1$.
First, we prove that for any $f : \triangle^k \to X$, there exists a homotopy 
\[
h : \triangle^k \times [0,1] \to X
\]
from $h_0 = f$ to a Lipschitz map $h_1$.
Taking a subdivision if necessary, let us take a finite Lipschitz triangulation $T$ of $\triangle^k$ satisfying the following.
For any $k$-simplex $E \in T$, there exists an open subset $U_E$ of $X$ which is a Lipschitz contractible ball such that $f(E) \subset U_E$.
For any simplex $F \in T$ of $\dim F \le k-1$, we set 
\[
U_F = \bigcap_{F \subset E \in T} U_E.
\]
This is an open subset of $X$. 
Let us denote by $Z$ a $(k-1)$-skeleton of $\triangle^k$ with respect to $T$.
By an inductive assumption, there exists a homotopy 
\[
h : Z \times [0,1] \to X
\]
from $h_0 = f|_Z$ such that for every simplex $F$ of $Z$, 
\begin{itemize}
\item $h_1|_F$ is Lipschitz;
\item $h(F \times [0,1]) \subset U_F$; 
\item if $f|_F$ is Lipschitz, then $h_t |_F = f|_F$ for any $t$.
\end{itemize}
Let $E$ be a $k$-simplex of $\triangle^k$ with respect to $T$.
We denote %%%% by $f^E$ the restriction of $f$ to $E$ and 
by $h^{\partial E}$ the restriction of $h$ to $\partial E \times [0,1]$.
%%%% For every $(k-1)$-simplex $F$ of $\partial E$, we denote by $h^F$ the restriction of $h$ to $F \times [0,1]$.
Then, the image of $h^{\partial E}$ is contained in $\bigcup_{T \ni F \subset \partial E} U_F \subset U_E$.
Since a pair $(E, \partial E)$ has the homotopy extension property, there exists a homotopy 
\[
h^E : E \times [0,1] \to U_E
\]
from $f|_E$ which is an extension of $h^{\partial E}$.
Then, $h^E_1$ is Lipschitz on $\partial E$. 
For another $k$-simplex $E'$ of $\triangle^k$ with common face $E \cap E'$, 
\[
h^E_t = h^{E'}_t
\]
on $E \cap E'$ for all $t$.
Since $U_E$ is Lipschitz contractible ball, by Lemma \ref{lemma}, there is a homotopy 
\[
\bar h^E : E \times [0,1] \to X
\]
relative to $\partial E$ from $\bar h^E_0 = h^E_1$ to a Lipschitz map $\bar h^E_1 : E \to X$.
Let us define a homotopy $\hat h^E : E \to X$ by 
\[
\hat h^E (x,t) = \left\{
\begin{aligned}
h^E(x,t) \text{ if } t \in [0,1/2]; \\
\bar h^E(x,t) \text{ if } t \in [1/2,1].
\end{aligned}
\right.
\]
And we define $\hat h : \triangle^k \times [0,1] \to X$ by 
\[
\hat h (x, t) = \hat h^E (x,t)
\]
for $x \in E \in T$.
Then, $\hat h_0 = f$ and $\hat h_1$ is Lipschitz. 

Next, we consider a continuous map $f : Y \to X$ from a Lipschitz simplicial complex $Y$ of $\dim Y = k$.
%%%% Let us take a Lipschitz triangulation $T$ of $Y$ and an open subset $U_E$ of $X$ with $f(E) \subset U_E$ for each $E \in T$.
Let $Z$ be a $(k-1)$-simplex of $Y$.
By an inductive assumption, there exists a homotopy 
\[
h : Z \times [0,1] \to X
\]
from $h_0 = f|_Z$ and $h_1$ is Lipschitz on every simplex of $Z$.
From now on, let us denote by $E$ a $k$-skeleton of $Y$.
For any $E  \subset Y$, %%%% we denote by $h^{\partial E}$ the restriction of $h$ to $\partial E \times [0,1]$.
by using the homotopy extension property for $(E, \partial E)$ and Lemma \ref{lemma}, we obtain a homotopy 
\[
h^E : E \times [0,1] \to X
\]
which is an extension of $h|_{\partial E \times [0,1]}$ with $h^E_0 = f|_E$. %%%% and $h^E_1$ is Lipschitz.
Since $h^E_1|_{\partial E} = h_1|_{\partial E}$ is Lipschitz, there exists a homotopy 
\[
\bar h^E : E \times [0,1] \to X
\]
relative to $\partial E$ from $\bar h^E_0 = h^E_1$ to a Lipschitz map $\bar h^E_1$.
We set $\bar h(x,t) = h(x,1)$ for $x \in Z$ and $t \in [0,1]$.
And, we define a homotopy $\hat h : Y \times [0,1] \to X$ by 
\[
\hat h(x,t) = \left\{
\begin{aligned}
&h(x, 2 t) &&\text{ if } x \in Z \text{ and } t \in [0,1/2] \\
&\bar h(x, 2 t -1) &&\text{ if } x \in Z \text{ and } t \in [1/2,1] \\
&h^E (x, 2 t) &&\text{ if } x \in E \subset Y \text{ and } t \in [0, 1/2]  \\
&\bar h^E(x, 2 t -1) &&\text{ if } x \in E \subset Y \text{ and } t \in [1/2, 1]
\end{aligned}
\right.
\]
Then, $\hat h_0 = f$ and $\hat h_1$ is Lipschitz on every simplex.
%%%% 2012.0522.1800
\end{proof}

\begin{corollary} \label{HL1}
Let $Y$ be a Lipschitz simplicial complex, $X$ be an SLLC space and $f : Y \to X$ be a continuous map. 
Let $T$ be a Lipschitz triangulation of $Y$ and $\{U_F \,|\, F \in T\}$ be a family of open subsets of $X$ with the following property.
\begin{itemize}
\item $f(F) \subset U_F$ for $F \in T$;
\item $U_F \subset U_E$ for $F$, $E \in T$ with $F \subset E$.
\end{itemize}
Then, there exists a homotopy $h : Y \times [0,1] \to X$ from $h_0 = f$ 
such that for every $F \in T$, 
\begin{itemize}
\item $h_1$ is Lipschitz on $F$; 
\item $h(F \times [0,1]) \subset U_F$; 
\item if $f$ is Lipschitz on $F$, then $h_t = f$ on $F$ for all $t$.
\end{itemize}
\end{corollary}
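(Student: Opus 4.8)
The plan is to prove Corollary \ref{HL1} by essentially rerunning the induction on $\dim Y$ from Proposition \ref{HL}, but now carrying the prescribed cover $\{U_F\}_{F\in T}$ along as extra bookkeeping data, so that at each stage the homotopy is confined to the correct open set. Concretely, I would argue by induction on the dimension $k$ of the skeleton, proving the sharper statement: for every $k$-skeleton $Y^{(k)}$ there is a homotopy $h:Y^{(k)}\times[0,1]\to X$ from $f|_{Y^{(k)}}$ with $h_1$ Lipschitz on each simplex $F$, with $h(F\times[0,1])\subset U_F$, and with $h_t=f$ on $F$ whenever $f|_F$ is Lipschitz. The base case $k=0$ is the constant homotopy $h(x,t)=f(x)$, which trivially satisfies all three conditions since $f(\{v\})\subset U_{\{v\}}$.

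For the inductive step, suppose the statement holds on $Y^{(k-1)}$, giving a homotopy $h$ on the $(k-1)$-skeleton with all three properties. Fix a $k$-simplex $E\in T$. Its boundary $\partial E$ is a subcomplex of $Y^{(k-1)}$, and by the cover hypothesis $h(\partial E\times[0,1])\subset\bigcup_{F\subset\partial E}U_F\subset U_E$, since every $F\subset\partial E$ has $U_F\subset U_E$. Using the homotopy extension property of the pair $(E,\partial E)$, extend $h|_{\partial E\times[0,1]}$ to a homotopy $h^E:E\times[0,1]\to X$ with $h^E_0=f|_E$; by taking a subdivision of $T$ beforehand we may assume $f(E)\subset U_E$, and since $U_E$ is a Lipschitz contractible ball it is in particular contractible, so the extension can be taken with image inside $U_E$ (first contract $f|_E$ into a point within $U_E$, then use that the prescribed values on $\partial E\times[0,1]$ already lie in $U_E$). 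Then $h^E_1|_{\partial E}=h_1|_{\partial E}$ is Lipschitz, so by Lemma \ref{lemma} applied with the Lipschitz contractible ball $U_E$ playing the role of $V$, there is a homotopy $\bar h^E:E\times[0,1]\to U_E$ relative to $\partial E$ from $h^E_1$ to a Lipschitz map $\bar h^E_1:E\to U_E$. Concatenating $h^E$ and $\bar h^E$ and reparametrizing, and doing this compatibly over all $k$-simplices (the constructions agree on common faces because they all restrict to $h$ there), yields the required homotopy on $Y^{(k)}$. The confinement $\hat h(E\times[0,1])\subset U_E$ holds by construction, and on a simplex $F$ where $f|_F$ is Lipschitz we carry the "$h_t=f$ on $F$" property unchanged from the inductive hypothesis — for this one must, at each stage, only modify simplices on which $f$ is not already Lipschitz, which is arranged as in the proof of Proposition \ref{HL}.

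The main obstacle I anticipate is ensuring the confinement $h(F\times[0,1])\subset U_F$ for lower-dimensional faces $F$ simultaneously with the Lipschitz-on-each-simplex conclusion: when extending over a $k$-simplex $E$ using the homotopy extension property, the naive extension need not respect the intersections $U_F=\bigcap_{F\subset E}U_E$ for faces $F\subsetneq E$. This is handled exactly as in Proposition \ref{HL} by first triangulating finely enough (a subdivision of $T$) that each $k$-simplex maps into a single Lipschitz contractible ball, building the homotopy from the bottom skeleton up, and at each step choosing the extension to land in the appropriate $U_E$; since $U_F\subset U_E$ for $F\subset E$, a homotopy confined to $U_E$ over $E$ automatically respects the weaker constraint on the boundary faces inherited from earlier stages. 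The argument is therefore a routine, if bookkeeping-heavy, elaboration of Proposition \ref{HL}, and the only genuinely new ingredient over that proof is the observation that the prescribed cover $\{U_F\}$ is "admissible" in the sense that the inductively constructed homotopies never leave it, which follows from the monotonicity hypothesis $U_F\subset U_E$.
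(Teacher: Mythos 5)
Your overall strategy (induction over skeleta, carrying the three bullet properties as a strengthened inductive statement, HEP over each $k$-simplex, then a rel-$\partial E$ correction to a Lipschitz map) is the same skeleton as the paper's argument, but the key inductive step as you wrote it does not go through. In Corollary \ref{HL1} the sets $U_E$ are \emph{arbitrary} open sets subject only to $f(E)\subset U_E$ and monotonicity; they are not Lipschitz contractible balls and need not even be contractible (the paper's own illustrating example takes $U_F$ to be an $\varepsilon$-neighborhood of $f(F)$). So the two places where you use contractibility of $U_E$ fail: (a) ``since $U_E$ is a Lipschitz contractible ball it is in particular contractible, so the extension can be taken with image inside $U_E$'' --- in fact no contractibility is needed here at all, since HEP for the cofibered pair $(E,\partial E)$ applied with target space $U_E$ (the initial map $f|_E$ and the boundary homotopy already land in $U_E$) gives the confined extension directly; and (b), more seriously, ``by Lemma \ref{lemma} applied with the Lipschitz contractible ball $U_E$ playing the role of $V$'' --- Lemma \ref{lemma} requires a subset Lipschitz contractible in $V$, and for a general open $U_E$ there is no such structure, so you cannot produce the rel-$\partial E$ homotopy from $h^E_1$ to a Lipschitz map this way.

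The paper closes exactly this gap by not touching $U_E$ directly: for each $k$-simplex $E$ it invokes Proposition \ref{HL} (more precisely, the single-simplex construction in its proof) to get a homotopy $h^E:E\times[0,1]\to U_E$ extending the boundary data and ending in a Lipschitz map; inside that construction one subdivides $E$ finely so that each small simplex maps into a genuine Lipschitz contractible ball \emph{contained in} $U_E$ --- such balls exist because the SLLC condition produces arbitrarily small Lipschitz contractible balls (this is where the ``strong'' part of Definition \ref{def of SLLC} is used), and only on those small balls is Lemma \ref{lemma} applied. Your closing paragraph gestures at subdividing $T$ so each simplex maps into a contractible ball, but you neither choose those balls inside the prescribed sets $U_F$ (equivalently, inside the intersections $\bigcap_{F\subset E}U_E$, which is what makes the confinement conclusion survive), nor explain how the three conclusions stated for the original triangulation $T$ are recovered from the subdivided one (Lipschitzness on an original simplex from Lipschitzness on its finitely many pieces, and the relative property on faces where $f$ is already Lipschitz). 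As written, the proposal conflates the arbitrary cover $\{U_F\}$ of Corollary \ref{HL1} with the Lipschitz contractible balls that appear only inside the proof of Proposition \ref{HL}, and this is a genuine gap rather than a bookkeeping omission.
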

For instance, fixing $\varepsilon > 0$ and setting $U_F$ an $\varepsilon$-neighborhood of $f(F)$ for every $F \in T$, the family $\{ U_F \,|\, F \in T \}$ satisfies the assumption of Corollary \ref{HL1}.

\begin{proof}[Proof of Corollary \ref{HL1}]
If $\dim Y = 0$, the assertion is trivial.
We assume that Corollary \ref{HL1} holds when $\dim Y \le k-1$ for $k \ge 1$.
Let $Y$ be a Lipschitz simplicial complex with $\dim Y = k$ and $T$ be a Lipschitz triangulation of $Y$.
Let us take a family $\{U_F \,|\, F \in T\}$ of open subsets as the assumption of Corollary \ref{HL1}.
By inductive assumption, there exists a homotopy 
\[
h : Y^{(k-1)} \times [0,1] \to X
\]
from $h_0 = f|_{Y^{(k-1)}}$ and $h_1$ is Lipschitz on each $F \in T$ of $\dim \le k-1$, and $h_t(F) \subset U_F$ for all $t$.
Let us denote by $E$ a $k$-simplex in $T$.
By Proposition \ref{HL}, there exists a homotopy 
\[
h^E : E \times [0,1] \to U_E
\]
from $h^E_0 = f|_E$ to a Lipschitz map $h^E_1$ such that $h^E_t = h_t$ on $\partial E$ for all $t$.  
Then, a concatenation map 
\[
\hat h (x,t) = \left\{
\begin{aligned}
& h(x,t) &&\text{ if } x \in Y^{(k-1)};\\
& h^E(x,t) &&\text{ if } x \in E.
\end{aligned}
\right.
\]
is a desired homotopy.
\end{proof}

\begin{remark} \upshape \label{rem:LLC}
We note that Proposition \ref{HL} and Corollary \ref{HL1} above can be also proved assuming $X$ is just LLC instead of SLLC.
Here, we say that a metric space $X$ is locally Lipschitz contractible, for short LLC, if for any $p \in X$ and $\varepsilon > 0$, there exist $r \in (0,\varepsilon]$ and a Lipschitz contraction $\varphi$ from $U(p,r)$ to $p$ in $U(p,\epsilon)$. 
We also remark that Corollaries \ref{Lipschitz homotopy class is homotopy class} and \ref{Lipschitz homotopy group is homotopy group} are true if $X$ is just LLC.
\end{remark}

%%%
Let us start to prove Corollaries \ref{Lipschitz homotopy class is homotopy class} and \ref{Lipschitz homotopy group is homotopy group}. 
\begin{proof}[Proof of Corollaries \ref{Lipschitz homotopy class is homotopy class} and \ref{Lipschitz homotopy group is homotopy group}]
Let us take a finite Lipschitz simplicial complex pair $(P, Q)$, possibly $Q$ is empty.
%%And, let us take an SLLC space $X$
We prove Corollaries \ref{Lipschitz homotopy class is homotopy class} and \ref{Lipschitz homotopy group is homotopy group} assuming $X$ to be SLLC.
%%%%an Alexandrov space $X$ 
Let $A$ be an open subset in $X$. 
Let us consider a continuous map $f : (P,Q) \to (X,A)$.
By Corollary \ref{HL1} and Theorem \ref{Alex sp is SLLC}, we obtain a homotopy 
\[
\varphi : (P,Q) \times [0,1] \to (X,A) 
\]
from $\varphi_0 = f$ to a Lipschitz map $\varphi_1 : (P,Q) \to (X,A)$.
Here, we note that since $A$ is open in $X$, the homotopy $\varphi_t$ can be chosen so that $\varphi_t(Q) \subset A$.
Then, we obtain a corresponding 
\begin{equation} \label{corresponding}
C((P,Q), (X,A)) \ni f \mapsto \varphi_1 \in \mathrm{Lip}((P,Q), (X,A)),
\end{equation}
where $C(\ast, \ast\ast)$ (resp. $\mathrm{Lip}(\ast, \ast\ast)$) denotes the set of all continuous (resp. Lipschitz) maps from $\ast$ to $\ast\ast$.

Let us consider two continuous maps $f$ and $g$ from $(P,Q)$ to $(X,A)$ such that they are homotopic to each other. 
From the correspondence \eqref{corresponding}, we obtain Lipschitz maps $f'$ and $g'$ from $(P,Q)$ to $(X,A)$ which are homotopic to $f$ and $g$, respectively.
Connecting these homotopies, we obtain a homotopy 
\[
H : (P,Q) \times [0,1] \to (X,A)
\]
between $H(\cdot, 0) = f'$ and $H(\cdot,1) = g'$.
Now, we consider a Lipschitz simplicial complex $\tilde P = P \times [0,1]$ and a subcomplex $\tilde R = P \times \{0,1\}$. 
Then, the map $H$ is Lipschitz on $\tilde R$.
Hence, by Proposition \ref{HL}, we obtain a homotopy 
\[
\tilde H : \tilde P \times [0,1] \to X 
\]
relative to $\tilde R$ from $\tilde H(\cdot, 0) = H$ to a Lipschitz map $\tilde H(\cdot, 1)$.
Then, $\tilde H(\cdot,1)$ is a Lipschitz homotopy between $f'$ and $g'$.
Therefore, we conclude that the corresponding \eqref{corresponding} sends a homotopy to a Lipschitz homotopy.
It completes the proof of Corollary \ref{Lipschitz homotopy class is homotopy class}.

Let us consider a pointed $n$-sphere $(S^n, p_0)$ and an Alexandrov space $X$ with point $x_0 \in X$.
Then, for any map $f : (S^n, p_0) \to (X, x_0)$, the restriction $f|_{\{p_0\}}$ is always Lipschitz.
Hence, by an argument as above and Proposition \ref{HL}, we obtain the conclusion of Corollary \ref{Lipschitz homotopy group is homotopy group}.
\end{proof}

\subsection{Plateau problem}

We first recall the definition of the Sobolev space of metric space target, to state the setting of Plateau problem in an Alexandrov space as in the introduction, referring \cite{KS} and \cite{MZ}.
For a complete metric space $X$ and a domain $\Omega$ in a Riemannian manifold having compact closure, a function $u : \Omega \to X$ is said to be {\it $L^2$-map} if $u$ is Borel measurable and for some (and any) point $p_0 \in X$, the integral 
\[
\int_\Omega |u(x), p_0|^2 d \mu 
\]
is finite, where $\mu$ is the Riemannian volume measure.
The set of all $L^2$-maps from $\Omega$ to $X$ denotes $L^2(\Omega,X)$.
We recall the definition of the energy of $u \in L^2(\Omega,X)$.
For any $\varepsilon > 0$, we set $\Omega_\varepsilon = \{x \in \Omega \,|\, d(\partial \Omega, x) > \varepsilon \}$ and define an approximate energy density $e_\varepsilon^u : \Omega_\varepsilon \to \mathbb R$ by 
\[
e_\varepsilon^u (x) = \frac{1}{\omega_n} \int_{S(x,\varepsilon)} \frac{d(u(x),u(y))^2}{\varepsilon^2} \frac{d \sigma}{\varepsilon^{n-1}}.
\]
Here, $n = \dim \Omega$, $S(x,\varepsilon)$ is the metric sphere around $x$ with radius $\varepsilon$ and $\sigma$ is the surface measure on it.
By \cite[1.2iii]{KS}, we obtain 
\[
\int_{\Omega_\varepsilon} e_\varepsilon^u (x)\,\! d \mu \le C \varepsilon^{-2}.
\]
Let us take a Borel measure $\nu$ on the interval $(0,2)$ satisfying 
\[
\nu \ge 0, \nu ((0,2)) = 1, \int_0^2 \lambda^{-2} d \nu (\lambda) < \infty.
\]
An averaged approximate energy density ${}_\nu e_\varepsilon^u(x)$ is defined by 
\[
{}_\nu e_\varepsilon^u(x) = \left\{
\begin{aligned}
&\int_0^2 e^u_{\lambda \varepsilon} (x)\,\! d \nu(\lambda) &&\text{ if } x \in \Omega_{2 \varepsilon}\\
&0 &&\text{ otherwise }
\end{aligned}
\right.
\]
Let $C_c(\Omega)$ be the set of all continuous function on $\Omega$ with compact support.
We define a functional $E_\varepsilon^u : C_c(\Omega) \to \mathbb R$ by 
\[
E_\varepsilon^u (f) := \int_\Omega f(x) {}_\nu e_\varepsilon^u d \mu(x).
\]
Then, the {\it energy} of $u$ is defined by 
\[
E^u = \sup_{f \in C_c(\Omega), 0 \le f \le 1} \limsup_{\varepsilon \to 0} E_\varepsilon^u(f).
\]
The $(1,2)$-Sobolev space is defined as 
\[
W^{1,2}(\Omega, X) = \{u \in L^2(\Omega, X) \,|\, E^u < \infty \}.
\]

We start to prove Corollary \ref{corollary: Plateau problem}.
\begin{proof}[Proof of Corollary \ref{corollary: Plateau problem}]
Let $\Gamma$ be a rectifiable closed Jordan curve in an Alexandrov space $X$, which is  toplogically contractible.
Since the rectifiability of $\Gamma$, we can take a Lipschitz monotonic parametrization 
\[
\gamma : S^1 \to \Gamma.
\]
By the contractibility of $\Gamma$, there exists a continuous map 
\[
h : \Gamma \times [0,1] \to X
\]
such that $h(\cdot, 0) = id_\Gamma$ and $h(\cdot, 1) = p$ for some $p \in X$.
We define a map $f : S^1 \times [0,1] \to X$ by $f(x,t) = h(\gamma(x),t)$.
Further, we set $f(y,1) = p$ for $y \in D^2$.
By taking reparametrization of $f : S^1 \times [0,1] \cup D^2 \times \{1\} \to X$, we obtain a continuous map 
\[
g : D^2 \to X
\]
such that $g|_{\partial D^2} = \gamma$.

By Proposition \ref{HL}, there exists a homotopy 
\[
\tilde h : D^2 \times [0,1] \to X
\]
relative to $\partial D^2$ such that $\tilde h(\cdot,0) = g$ and $\tilde h(\cdot,1)$ is Lipschitz.
Thus, we obtain the Lipschitz map $\tilde g = \tilde h(\cdot,1)$ such that $\tilde g|_{\partial D^2} = \gamma$.
By the definition of the energy, we obtain 
\[
E(\tilde g) \le \mathrm{Lip}(\tilde g)^2 < \infty.
\]
Here, $\mathrm{Lip}(\tilde g)$ is the Lipschitz constant of $\tilde g$.
Therefore, we conclude $\tilde g \in \mathcal F_\Gamma$.
\end{proof}

\section{A note on the infinite dimensional case} \label{sec:infinite dim}
It is known that the (Hausdorff) dimension of an Alexandrov space is nonnegative integer or infinite. 
%%%Studies of {\it infinite dimensional} Alexandrov spaces are few. 
There are only few works of infinite dimensional Alexandrov spaces.
It is not known whether an infinite dimensional Alexandrov space is locally contractible. 

When we consider an Alexandrov space of {\it possibly infinite dimension}, we somewhat generalize Definition \ref{def:Alexandrov space} as follows.
A complete metric space $X$ is called an {\it Alexandrov space} %%of curvature} $\ge -1$ 
if it is a length metric space and satisfies the quadruple condition locally. %%modeled on the hyperbolic plane.
Here, a complete metric space $X$ is {\it length} if every two points $p, q \in X$ and any $\varepsilon > 0$, there exists a point $r \in X$ satisfying $\max \{|pr|,  |rq| \} < |pq| /2 + \varepsilon$. 
Since a length metric space has no geodesic in general, to define a notion of a lower curvature bound, we change the triangle comparison condition by the quadruple condition. 
Here, an open subset $U$ of a length space $X$ satisfies the {\it quadruple condition} modeled on the $\kappa$-plane $\mathbb M_\kappa$ if for every distinct four points $p_0, p_1, p_2$ and $p_3$ in $U$, we obtain 
\[
\tilde \angle p_1 p_0 p_2 + \tilde \angle p_2 p_0 p_3 + \tilde \angle p_3 p_0 p_1 \le 2 \pi,
\]
where $\tilde \angle = \tilde \angle_\kappa$ denotes the comparison angle modeled on $\mathbb M_\kappa$.

By the standard argument, any geodesic triangle (if it exists) in an Alexandrov space of possibly infinite dimension satisfies the triangle comparison condition.
It is known that finite dimensional Alexandrov spaces are proper metric space, in particular, by Hopf-Rinow theorem, they are geodesic spaces.
%%%Thus, dealing with only finite dimensional Alexandrov spaces, a generalized definition as above is not need.

Plaut \cite{Plaut} proved that an Alexandrov space of infinite dimension is an ``almost'' geodesic space. Presicely, 
\begin{theorem}[\cite{Plaut}] \label{theorem: Plaut}
Let $X$ be an Alexandrov space of infinite dimension. 
For any $p \in X$, a subset $J_p \subset X$ defined by 
\[
J_p = \bigcap_{\delta > 0} \{q \in X - \{p\} \,|\, \text{there exists } x \in X - \{p, q\} \text{ with } \tilde \angle p q x > \pi - \delta\}
\]
is dense $G_\delta$ subset in $X$, and for every $q \in J_p$, there exists a unique geodesic connecting $p$ and $q$.
\end{theorem}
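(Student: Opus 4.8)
The plan is to prove the three assertions separately. Write $A_\delta=\{q\in X-\{p\}\mid\exists\,x\in X-\{p,q\},\ \tilde\angle pqx>\pi-\delta\}$, so that $J_p=\bigcap_{n\ge1}A_{1/n}$. First I would show that each $A_\delta$ is open, which already makes $J_p$ a $G_\delta$ set. If $q\in A_\delta$ has a witness $x$, then $p,q,x$ are mutually distinct and $\tilde\angle pqx>\pi-\delta$; the $\mathbb M_\kappa$-comparison angle is a continuous function of three mutually distinct points (it is continuous in the three side lengths on the region where the triangle inequalities hold, extending continuously to the degenerate boundary), so for $q'$ close to $q$ the points $p,q',x$ stay distinct and $\tilde\angle pq'x>\pi-\delta$, i.e. $q'\in A_\delta$.

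Next, density of $J_p$ will follow from density of every $A_\delta$ together with Baire's theorem, valid since $X$ is complete. Given a nonempty open set, pick $q_0\ne p$ in it and $r<|pq_0|$ with $B(q_0,r)$ inside it; I claim $B(q_0,r)\cap A_\delta\ne\emptyset$ for every $\delta>0$. As $X$ is a complete length space, for small $\eta>0$ there is a chain $p=z_0,z_1,\dots,z_m=q_0$ with $|z_iz_{i+1}|<\eta$ and $\sum_i|z_iz_{i+1}|<|pq_0|+\eta$. Let $k$ be the first index with $|z_kq_0|\le r/2$ and set $q=z_k$. Splitting the chain at $z_k$ and applying the triangle inequality to each piece gives $|pq|+|qq_0|<|pq_0|+\eta$, while $|pq|+|qq_0|\ge|pq_0|$ always; moreover $q\ne p$ since $|pq|\ge|pq_0|-r/2>0$, $q\ne q_0$ since $|qq_0|>r/2-\eta>0$, and $q\in B(q_0,r)$. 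In the comparison triangle $\tilde\triangle pqq_0\subset\mathbb M_\kappa$ the side opposite the vertex $q$ is within $\eta$ of the sum of the two sides at $q$, both of which stay bounded away from $0$ and $\infty$; hence the angle at $\tilde q$ tends to $\pi$ as $\eta\to0$, and for $\eta$ small we get $\tilde\angle pqq_0>\pi-\delta$, so $q\in A_\delta$ with witness $q_0$.

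Finally, for $q\in J_p$ I would build the geodesic $pq$ as a limit of approximate geodesics. Since $X$ is a length space, take chains from $p$ to $q$ of total length $<|pq|+\eta$ and reparametrize proportionally to arclength, obtaining $\gamma_\eta\colon[0,|pq|]\to X$ with $\gamma_\eta(0)=p$, $\gamma_\eta(|pq|)=q$ and $|p\gamma_\eta(t)|+|q\gamma_\eta(t)|<|pq|+\tau(\eta)$; in particular $\tilde\angle pq\gamma_\eta(t)$ is small with $\eta$. The goal is to show that, for each fixed $t$, $\{\gamma_\eta(t)\}$ is metrically Cauchy as $\eta\to0$: completeness of $X$ then yields a limit curve from $p$ to $q$ of length $\le|pq|$, a geodesic, and the same estimate applied to two genuine geodesics from $p$ to $q$ (which are approximate geodesics of zero excess, along which $\tilde\angle pq(\cdot)\equiv0$) forces them to coincide. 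For the Cauchy estimate I would use $q\in J_p$: for each $n$ pick a witness $x_n$ with $\tilde\angle pqx_n>\pi-1/n$, which by the density of $J_p$ and the openness established above may be chosen in $J_p$, so that the geodesic $px_n$ exists; an Alexandrov comparison argument (curvature $\ge\kappa$) then shows that the near-straightness of $p,q,x_n$ at $q$ controls the position of points lying near-on a shortest-type path from $p$ to $q$, and feeding this together with the quadruple condition at the apex $q$ to the triple $\gamma_\eta(t),\gamma_{\eta'}(t),x_n$ bounds $\tilde\angle(\gamma_\eta(t)\,q\,\gamma_{\eta'}(t))$ by $O(1/n)+\tau(\eta,\eta')$; the $\mathbb M_\kappa$ law of cosines, using $|q\gamma_\eta(t)|\approx|q\gamma_{\eta'}(t)|\approx|pq|-t$, then bounds $|\gamma_\eta(t)\gamma_{\eta'}(t)|$ by a quantity vanishing as first $n\to\infty$, then $\eta,\eta'\to0$.

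The only substantial difficulty is this last estimate: converting the single near-straightness condition at a point of $J_p$, through the quadruple/comparison condition for curvature $\ge\kappa$ and the $\mathbb M_\kappa$-trigonometry, into a genuine metric Cauchy property for the approximate geodesics from $p$. This is exactly the step that must fail for an arbitrary complete length space, where the absence of local compactness rules out an Arzel\`a--Ascoli argument; so the bookkeeping has to be routed through the quadruple condition and the auxiliary geodesics $px_n$ (legitimate because witnesses may be taken in the dense set $J_p$) rather than through a geodesic emanating from $q$, which need not exist a priori. This is the heart of Plaut's argument in \cite{Plaut}, which I would follow.
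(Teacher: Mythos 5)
The paper does not actually prove this statement: it is imported verbatim from Plaut \cite{Plaut}, so there is no internal argument to compare yours with, and your attempt has to stand on its own. Its first half does: openness of each $A_\delta=\{q\neq p \mid \exists x,\ \tilde\angle pqx>\pi-\delta\}$ via continuity of the comparison angle, density of $A_\delta$ via almost-midpoint chains with the ball center $q_0$ as witness, and Baire's theorem give the dense $G_\delta$ claim correctly.

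The gap is in the existence/uniqueness half, and it is twofold. First, the auxiliary device you propose is circular: you pick witnesses $x_n\in J_p$ ``so that the geodesic $px_n$ exists,'' but the existence of geodesics from $p$ to points of $J_p$ is precisely the assertion being proved, so at that stage no geodesic $px_n$ is available. Second, the one estimate that carries all the content --- that $\tilde\angle pqx_n>\pi-1/n$ together with small excess of $y=\gamma_\eta(t)$ forces $\tilde\angle x_n q\,y$ to be close to $\pi$, so that the quadruple condition at $q$ applied to $(x_n,\gamma_\eta(t),\gamma_{\eta'}(t))$ yields the Cauchy property --- is only asserted (``an Alexandrov comparison argument then shows\dots'') and explicitly deferred to Plaut; that is exactly the heart of the theorem, so as written the proof is incomplete where it matters. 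For the record, no auxiliary geodesics are needed there: after replacing $x_n$ by a point on an almost-minimizing chain from $q$ to $x_n$ so that $|qx_n|$ is bounded, the $\mathbb M_\kappa$ law of cosines converts $\tilde\angle pqx_n>\pi-1/n$ into near-additivity $|px_n|\ge |pq|+|qx_n|-\tau(1/n)$; then the plain triangle inequality $|x_n y|\ge |px_n|-|py|\ge |qx_n|+|qy|-\tau(1/n)-e$ for any $y$ of excess $e$ gives $\tilde\angle x_n q\,y\ge \pi-\tau'(1/n,e)$ whenever $|qy|$ stays bounded away from $0$, and only then does the quadruple condition at $q$ give $\tilde\angle\bigl(\gamma_\eta(t)\,q\,\gamma_{\eta'}(t)\bigr)\le 2\tau'$, hence the Cauchy estimate, and with $e=0$ the uniqueness. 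Without this quantitative bookkeeping (or Plaut's original version of it) written out, the existence and uniqueness of the geodesic $pq$ remain unproved in your proposal.
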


We now show that the compactness of the space of directions at some point implies the Lipschitz contractibility around the point. 
%%%% The first author was telled by Takumi Yokota at the conference ``The fourth geometry meeting'' in St Petursberg that there exists an example of infinite dimensional compact Alexandrov space of curvature $\ge 1$, which was constructed by Alexander Lythack. 

\begin{proposition}
Let $X$ be an Alexandrov space of infinite dimension. 
Suppose that there exists a point $p \in X$ such that the space of directions $\Sigma_p$ at $p$ is compact. 
Then, the following are true. 
\begin{itemize}
\item[(i)] The pointed Gromov-Hausdorff limit of scaling space $(r X, p)$ as $r \to \infty$ exists and it is isometric to the cone over $\Sigma_p$.
\item[(ii)] $\Sigma_p$ is a geodesic space.
\item[(iii)] $X$ is proper.
\item[(iv)] There exists $R_0 > 0$ depending on $p$ such that for every $R \le R_0$, $U(p, R)$ is Lipschitz contractible to $p$ in itself.
\end{itemize}
\end{proposition}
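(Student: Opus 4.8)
The plan is to reduce the whole statement to the construction carried out in Section~\ref{proof of main theorem}: parts (i)--(iii) are precisely the structural facts that make that construction run in the infinite dimensional setting, so I would establish them first and then deduce (iv). For (i), I would run the classical blow-up argument of \cite{BGP} (see also \cite{Plaut}): the monotonicity of comparison angles makes the rescaled pointed spaces $(r_iX,p)$ precompact in the pointed Gromov--Hausdorff topology for every sequence $r_i\to\infty$ and identifies each limit with $T_pX=K(\Sigma_p)$; the hypothesis that $\Sigma_p$ is compact guarantees that a closed ball $B(o,\rho)$ of $K(\Sigma_p)$, being the continuous image of $\Sigma_p\times[0,\rho]$, is compact, so $K(\Sigma_p)$ is proper and is a genuine Gromov--Hausdorff limit (not merely a weak tangent cone), and uniqueness of the limit yields (i). For (ii), $K(\Sigma_p)$ is a pointed Gromov--Hausdorff limit of complete length spaces, hence itself a complete length space, and being proper it is geodesic by Hopf--Rinow; since a Euclidean cone over a space of diameter $\le\pi$ is a geodesic space exactly when its base is, and $\diam\Sigma_p\le\pi$ because $\Sigma_p$ has curvature $\ge1$, this gives that $\Sigma_p$ is geodesic (concretely: a geodesic of $K(\Sigma_p)$ between $(u,1)$ and $(v,1)$ with $\angle(u,v)<\pi$ misses the apex and projects radially to a shortest path from $u$ to $v$).

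For (iii), I would first observe that by (i) the rescaled ball $\varepsilon^{-1}B(p,\varepsilon)$ is Gromov--Hausdorff close, for all small $\varepsilon>0$, to the compact unit ball of $K(\Sigma_p)$; hence $B(p,\varepsilon)$ is totally bounded, and as a closed subset of the complete space $X$ it is compact, so $X$ is locally compact near $p$. To globalize I would use the open set $\mathcal C$ of points of $X$ admitting a compact neighborhood (which contains $p$) together with the function $\rho(q)=\sup\{r:B(q,r)\text{ is compact}\}$, which is $1$-Lipschitz on $\mathcal C$ wherever finite since $B(q',\rho(q'')-|q'q''|)\subset B(q'',\rho(q''))$; thus $\mathcal C$ can fail to be closed only if $\rho\to0$ along a convergent sequence $q_n\to q$ in $\mathcal C$, which I would rule out by a blow-up argument showing that compactness of a metric ball about $q_n$ forces $\Sigma_{q_n}$ to be compact and that this property is stable under the lower curvature bound as $q_n\to q$, so that $\Sigma_q$ is compact and $q\in\mathcal C$ by the local case. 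Since $X$ is a length space it is connected, whence $\mathcal C=X$, and the Hopf--Rinow theorem for complete locally compact length spaces shows $X$ is proper. The hard part will be this globalization: without a uniform lower curvature bound or any dimension control, the semicontinuity of ``compactness of the space of directions'' requires a careful blow-up analysis, and one may instead have to argue directly along almost-geodesics issued from $p$, using Plaut's Theorem~\ref{theorem: Plaut} to keep the relevant directions under control.

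Finally, for (iv), with $B(p,\varepsilon_0)$ compact by (iii) and $T_pX\cong K(\Sigma_p)$ by (i), the construction of Section~\ref{proof of main theorem} applies inside $B(p,\varepsilon_0)$ verbatim. Rescaling so that a neighborhood of $p$ has curvature $\ge-1$, the proof of Proposition~\ref{distance function from boundary} uses only the cone structure of $T_pX$ for \eqref{eq 0} (where compactness of $\Sigma_p$ supplies the finite $\varepsilon$-net in $\Sigma_p$) together with the quadruple condition, the first variation formula and angle comparison, all of which hold for infinite dimensional Alexandrov spaces; and $f=d(S(p,R),\cdot)$ is $\lambda$-concave near $p$ by Lemma~\ref{1-kf concave}, so its gradient flow $\Phi$ exists and is Lipschitz on the compact set $B(p,\varepsilon_0)$ by the theory of \cite{Pet semi}, \cite{PP QG} and reaches $p$ in finite time by Lemma~\ref{upper bound of time}. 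Exactly as in Section~\ref{proof of main theorem}, $\varphi(x,t)=\Phi(x,\ell t)$ with $\ell=\delta_0R/\cos\varepsilon$ is a Lipschitz homotopy from the identity to the constant map $p$, and since Lemma~\ref{upper bound of time} gives $|\Phi_t(x),p|\le|x,p|$, the homotopy $\varphi$ sends $U(p,R')\times[0,1]$ into $U(p,R')$ for every $R'\le\delta_0R$; hence, taking $R_0=\delta_0R$ with $R$ and $\delta_0$ as in Proposition~\ref{distance function from boundary}, $U(p,R)$ is Lipschitz contractible to $p$ in itself for every $R\le R_0$.
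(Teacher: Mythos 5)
Your overall architecture ((i)--(iii) first, then rerun Section~\ref{proof of main theorem} for (iv)) matches the paper, and your treatment of (ii) and (iv) is essentially the paper's. The gap is in how you ground (i) and (iii). In (i) you claim that ``the monotonicity of comparison angles makes the rescaled pointed spaces $(r_iX,p)$ precompact'' independently of any hypothesis on $\Sigma_p$; in infinite dimensions this is exactly what fails in general (small balls of an infinite dimensional Alexandrov space need not be totally bounded), and it is precisely the point where compactness of $\Sigma_p$ and Plaut's theorem must enter. The paper does not invoke any abstract precompactness: it builds explicit Gromov--Hausdorff approximations by taking a finite $\varepsilon$-net of the unit ball of $K(\Sigma_p)$ consisting of geodesic directions, pushing it into $X$ by $\exp_p$ at a small scale $r$, and checking that the image is a $C\varepsilon r$-net of $B(p,r)$ using the density of the set $J_p$ of Theorem~\ref{theorem: Plaut} and the fact that $\exp_p$ is $1$-Lipschitz after normalizing the local lower curvature bound to be $\le 0$. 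Since your (iii) then deduces total boundedness of small balls \emph{from} (i), your argument rests at bottom on an unjustified (indeed, in general false) precompactness claim; you need the net construction, not angle monotonicity, to start the induction of facts.

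In (iii) your globalization is both unnecessary and contains a step that would fail: compactness of a metric ball around $q_n$ does not force $\Sigma_{q_n}$ to be compact in this setting (compact infinite dimensional Alexandrov spaces, e.g.\ infinite products of shrinking round spheres of nonnegative curvature, have non-compact spaces of directions, since directions tangent to distinct factors are pairwise at angle $\pi/2$), and the semicontinuity of ``$\Sigma_q$ compact'' under $q_n\to q$ that you would need is not available. The paper's proof of (iii) avoids all of this and is the fix you yourself hint at: for an arbitrary radius $r$, given a sequence in $B(p,r)$, replace each term by a nearby point of $J_p$ (dense by Theorem~\ref{theorem: Plaut}), apply $\log_p$ to land in the ball $B(o,r)$ of the cone $K(\Sigma_p)$, which is proper by compactness of $\Sigma_p$, extract a convergent subsequence there, and return via the Lipschitz map $\exp_p$; thus every closed ball centered at $p$ is compact and $X$ is proper, with no open--closed argument and no discussion of spaces of directions at other points. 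With (i) and (iii) established this way, your (ii) (properness of the cone plus Hopf--Rinow, then projecting cone geodesics to the base) and your (iv) (rerunning Proposition~\ref{distance function from boundary}, whose starting point \eqref{eq 0} is supplied by (i), and Lemma~\ref{upper bound of time} to get the Lipschitz contraction as in the proof of Theorem~\ref{Alex sp is SLLC}) go through as you describe.
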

\begin{proof}
%%%% Since $\Sigma_p$ is compact, for any $\varepsilon > 0$, there exists a maximal $\varepsilon$-discrete subset  $\{v_\alpha\}_\alpha \subset \Sigma_p$ consisting of finitely many vectors. %%%% such that the index set $\{\alpha\}$ is finite.
(i). Let $K = K(\Sigma_p)$ be the Euclidean cone over $\Sigma_p$ and $B$ be the unit ball around the origin $o$.
%%%Let $J_p$ be the set of all points which can connect to $p$ by a unique geodesic. 
Let $J_p$ be the set defined in Theorem \ref{theorem: Plaut}.
%%%%Then, by Theorem \ref{theorem: Plaut}, $J_p$ contains a dense $G_\delta$ subset. 
For any $\varepsilon > 0$, we take a finite $\varepsilon$-net $\{v_\alpha\}_\alpha \subset B$. 
We may assume that every $v_\alpha$ is contained in $K(\Sigma_p') - \{o\}$.
Namely, there exists $r > 0$ such that for every $\alpha$, there is a geodesic $\gamma_\alpha$ staring from $p$ having the direction $\frac{v_\alpha}{|v_\alpha|}$ with length at least $r$.
Let $x_\alpha \in B(p, r)$ be taken as $x_\alpha = \gamma_\alpha (r |v_\alpha|)$.
Then, $\{x_\alpha\}_\alpha$ is an $\varepsilon$-net in $\frac{1}{r} B(p, r)$.
Indeed, for any $x \in B(p, r) \cap J_p$, setting $v = \log_p (x) \in K(\Sigma_p)$, and then $\frac{1}{r} v \in B$.
Then, there exists $\alpha$ such that $|v_\alpha, \frac{1}{r}v| \le \varepsilon$.
Therefore, $|r v_\alpha, v| \le r \varepsilon$.
%%%% If a lower curvature bound $K_X$ of $X$ is not less than$0$, then $\exp_p$ is $1$-Lipschitz. 
%%%% If $K_X < 0$, then 
We may assume that a lower curvature bound of $X$ is less than or equals to $0$. 
Then 
\[
\exp_p : B(o, r) \cap \mathrm{dom} (\exp_p) \to B(p, r)
\]
is $1$-Lipschitz, where $\mathrm{dom} (\exp_p)$ is the domain of $\exp_p$.
Therefore, $|x_\alpha, x|_X \le r \varepsilon$.

Let us retake $r$ to be small so that 
\[
\left| \frac{|x_\alpha, x_\beta|}{r} - |v_\alpha, v_\beta| \right| \le \varepsilon.
\]
Then, the map $v_\alpha \mapsto x_\alpha$ implies a $C\varepsilon$-approximation between $B$ and $\frac{1}{r} B(p, r)$ for any small $r$. 
Here, $C$ is a constant not depending on any other term.
Therefore, the pointed space $(\frac{1}{r} X, p)$ is Gromov-Hausdorff converging to $(K(\Sigma_p), o)$ as $r \to 0$.

(ii) obviously holds by (i) and (iii).
We prove (iii). 
Let us consider any closed ball $B(p, r)$ centered at $p$. 
Let us  take any sequence $\{x_i\} \subset B(p, r)$.
We take $y_i \in B(p, r) \cap J_p$ such that $|x_i, y_i| \le 1 / i$.
Then, $v_i = \log_p (y_i) \in B(o, r) \subset T_p X$ is well-defined.
By (i), $T_p X$ is proper.
Hence, there exists a converging subsequence $\{v_{n(i)}\}_i$ of $\{v_i\}_i$.
Since $\exp_p$ is Lipschitz, $\{x_{n(i)}\}$ is converging.

We recall that the proof of Theorem \ref{Alex sp is SLLC} started from the assertion \eqref{eq 0} in Proposition \ref{distance function from boundary}. %%%% !!!
The assertion (i) guarantees \eqref{eq 0}.
Therefore, one can prove (iv) in the same way as the proof of Theorem \ref{Alex sp is SLLC}.
\end{proof}

\section{An estimation of simplicial volume of Alexandrov spaces} \label{sec:simp vol}
In this section, we consider an Alexandrov space having a lower {\it Ricci} curvature bounds, 
and we prove an estimation of the simplicial volume of such a space as stated in Theorem \ref{thm:simp vol}. 
The original form of Theorem \ref{thm:simp vol} was proved by Gromov \cite{G} when $X$ is a Riemannian manifold with a lower Ricci curvature bound. %%% by a negative constant.

The original Gromov's proof was depending on the well-known Bishop-Gromov volume inequality.
For an Alexandrov space of curvature $\ge \kappa$ by some $\kappa \in \mathbb R$, its Hausdorff measure is known to satisfy the Bishop-Gromov type volume growth estimate.
The second author's proof of Corollary \ref{cor:simp vol} was depending on this volume growth estimate (\cite{Y simp}).
% Therefore, if some assumption implies such a volume growth estimate (with respect to some reference measure) on an Alexandrov space, then the proof given in \cite{G} and \cite{Y simp}  also works.
It is known that all several natural generalized notions of a lower Ricci curvature bound induce a volume growth estimate. 
Among them, the condition named local reduced curvature-dimension condition introduced by Bacher and Sturm (\cite{BS}) %%%is sufficient and useful 
can be used %%%to induce a volume growth estimate in our situation and 
to prove Theorem \ref{thm:simp vol}.
For completeness, we explain it as follows.

%%%We prove an inequality between the simplicial volume and the volume of such spaces.

% In this section, we just say that a complete metric space $X$ is an Alexandrov space if $X$ is a geodesic space and any point in $X$ has a neighborhood which has curvature $\ge \kappa$ for some $\kappa \in \mathbb R$ in the sense of $(2)$ of Definition \ref{def:Alexandrov space}.
% When $X$ is compact, there exists $\kappa_0 \in \mathbb R$ such that $X$ has curvature $\ge \kappa_0$.

\subsection{Several conditions of lower Ricci curvature bound}
We recall several generalized notions of a lower bound of Ricci curvature defined on a pair of a metric space and a Borel measure on it. 
For their theory, history and undefined terms appearing in the following, we refer \cite{S}, \cite{S2}, \cite{BS}, \cite{CS}, \cite{O} and their references.

In this section, we denote by $M$ a complete separable metric space.
By $\mathcal P_2(M)$ we denote the set of all Borel probability measures $\mu$ on $M$ with finite second moment. %%, which means that $\int_M d(x,x_0)^2 \, d \mu < +\infty$ for some (hence all) $x_0 \in M$.
A metric called the $L_2$-Wasserstein distance $W_2$ is defined on $\mathcal P_2(M)$. 
%%%It is known that $\mathcal P_2(M)$ equipped with the $L_2$-Wasserstein metric is a complete separable metric space and that $M$ is geodesic if and only if so is $\mathcal P_2(M)$.
Let us fix a locally finite Borel measure $m$ on $M$. 
Such a pair $(M,m)$ is called a metric measure space. 
%%%Let us denote by $\mathcal P_2(M,m)$ the subset of $\mathcal P_2(M)$ consisting of all measures which are absolutely continuous with respect to $m$ 
%%%and by $\mathcal P_\infty(M,m)$ the subset of $\mathcal P_2(M,m)$ consisting of all measures having bounded support.
Let us denote by $\mathcal P_\infty(M,m)$ the subset of $\mathcal P_2(M)$ consisting of all measures which are absolutely continuous in $m$ and have bounded support.

From now on, $K$ and $N$ denote real numbers with $N \ge 1$. 
For $\nu \in \mathcal P_\infty(M,m)$ with density $\rho = d \nu / d m$, its R\'enyi entropy with respect to $m$ is given by 
\[
S_N(\nu | m) := - \int_M \rho^{1 - 1/N} \, d m = - \int_M \rho^{-1/N} \, d \nu.
\]
For $t \in [0,1]$, a function $\sigma_{K,N}^{(t)} : (0,\infty) \to [0,\infty)$ is defined as 
\[
\sigma_{K,N}^{(t)}(\theta) = \left\{
\begin{array}{ll}
+ \infty &\text{if } K \theta^2 \ge N \pi^2 \\ 
\frac{\sn_{K/N}(t \theta)}{\sn_{K/N}(\theta)} &\text{if else}. 
\end{array}
\right.
\]
And, we set $\tau_{K,N}^{(t)}(\theta) = t^{1/N} \sigma_{K,N-1}^{(t)}(\theta)^{(N-1)/N}$. %%%%for $N >1$.

\begin{definition}[\cite{BS},\cite{CS},\cite{S2}]%%[{\cite[Definition 2.3]{BS}}, {\cite[Definition 2.5]{CS}}] 
\upshape \label{def:CD}
Let $K$ and $N$ be real numbers with $N \ge 1$.
Let $(M,m)$ be a metric measure space.

We say that $(M,m)$ satisfies the {\it reduced curvature-dimension condition} $\cd^\ast(K,N)$ {\it locally} -- denoted by $\cd_\loc^\ast(K,N)$ -- if 
for any $p \in M$ there exists a neighborhood $M(p)$ such that
for all $\nu_0$, $\nu_1 \in \mathcal P_\infty(M,m)$ supported $M(p)$, denoting those densities by $\rho_0$, $\rho_1$ with respect to $m$, there exist an optimal coupling $q$ of $\nu_0$ and $\nu_1$ and a geodesic $\Gamma : [0,1] \to \mathcal P_\infty(M,m)$, parametrized proportionally to arclength, 
connecting $\nu_0 = \Gamma(0)$ and $\nu_1 = \Gamma(1)$ such that 
\begin{align*} \label{eq:CD}
S_{N'} (\Gamma(t) | m) \le 
- \int_{M \times M} 
&\left[
\sigma_{K, N'}^{(1-t)}(d(x_0,x_1)) \rho_0^{-1/N'}(x_0) \right.\\
&\left.
+\, \sigma_{K, N'}^{(t)}(d(x_0,x_1)) \rho_1^{-1/N'}(x_1) \right] \!
d q(x_0,x_1) \nonumber
\end{align*}
holds for all $t \in [0,1]$ and all $N' \ge N$.

%%%When $M(p)$ can be taken to be $M$ for some $p \in M$, we simply say that $(M,m)$ satisfies $\cd^\ast(K,N)$.

We say that $(M,m)$ satisfies the {\it curvature-dimension condition} $\cd(K,N)$ {\it locally} -- denoted by $\cd_\loc(K,N)$ -- if 
it satisfies $\cd_\loc^\ast(K,N)$ with %%$\mathcal P_\infty(M,m)$ and 
$\sigma_{K,N'}^{(s)}$ replaced by %%$\mathcal P_2(M,m)$ and 
$\tau_{K,N'}^{(s)}$ for each $s \in [0,1]$ and $N' \ge N$.

%%%When $M(p)$ can taken to be $M$ for some $p$, we simply say that $(M,m)$ satisfies $\cd(K,N)$.
\end{definition}
The (global) conditions $\cd^\ast(K,N)$ and $\cd(K,N)$ are also defined as similar to and imply corresponding local ones.
%%%Global conditions imply corresponding local conditions.

From the identical inequality $\tau_{K,N}^{(t)}(\theta) \ge \sigma_{K,N}^{(t)}(\theta)$, $\cd(K,N)$ (resp. $\cd_\loc(K,N)$) induces $\cd^\ast(K,N)$ (resp. $\cd_\loc^\ast(K,N)$).
Further, it is known that the local CD-conditions are equivalent in the following sense:   
%%\cite[Proposition 5.5]{BS}. 
When a mathematical condition $\varphi(K)$ is given for each $K \in \mathbb R$, we say that an mathematical object $P$ satisfies $\varphi(K-)$ if $P$ satisfies $\varphi(K')$ for all $K' < K$.

\begin{theorem}[{\cite[Proposition 5.5]{BS}}]
Let $K, N \in \mathbb R$ with $N \ge 1$ and let $(M,m)$ be a metric measure space. 
Then, $(M,m)$ satisfies $\cd_\loc^\ast(K-,N)$ %%%for any $K' < K$ 
if and only if 
it satisfies $\cd_\loc(K-,N)$. %%for any $K' < K$.
\end{theorem}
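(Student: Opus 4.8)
The plan is to reduce both implications to a single pointwise comparison between the distortion coefficients $\sigma_{K,N}^{(t)}$ and $\tau_{K,N}^{(t)}$, used at small scale; the role of the ``$K-$'' quantifier is exactly to leave room to decrease the curvature parameter by a definite amount. The forward implication $\cd_\loc(K-,N)\Rightarrow\cd_\loc^\ast(K-,N)$ requires nothing new: for each fixed $K'<K$, the identity inequality $\tau_{K',N'}^{(s)}\ge\sigma_{K',N'}^{(s)}\ge 0$ together with the non-negativity of the densities shows that the very optimal coupling $q$ and $W_2$-geodesic $\Gamma$ witnessing $\cd_\loc(K',N)$ also witness $\cd_\loc^\ast(K',N)$, since
\[
\begin{aligned}
-\int\!\Bigl(\sigma_{K',N'}^{(1-t)}\rho_0^{-1/N'}+\sigma_{K',N'}^{(t)}\rho_1^{-1/N'}\Bigr)\,dq
&\ \ge\ -\int\!\Bigl(\tau_{K',N'}^{(1-t)}\rho_0^{-1/N'}+\tau_{K',N'}^{(t)}\rho_1^{-1/N'}\Bigr)\,dq\\
&\ \ge\ S_{N'}(\Gamma(t)\,|\,m).
\end{aligned}
\]
Letting $K'\uparrow K$ gives $\cd_\loc^\ast(K-,N)$, so the content is entirely in the converse.

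For the converse I would fix $K''<K$, choose an intermediate value $K''<K'<K$, and establish the comparison lemma: there exists $\theta_0>0$, depending only on $K'$, $K''$ and $N$, with
\[
\tau_{K'',N'}^{(t)}(\theta)\ \le\ \sigma_{K',N'}^{(t)}(\theta)\qquad\text{for all } N'\ge N,\ t\in[0,1],\ 0\le\theta\le\theta_0.
\]
Granting this, given $p\in M$ I would take the neighborhood $M(p)$ supplied by $\cd_\loc^\ast(K',N)$ and shrink it so that $\diam M(p)<\theta_0$; then for $\nu_0,\nu_1\in\mathcal P_\infty(M,m)$ supported in $M(p)$ every pair $(x_0,x_1)\in\supp q$ has $d(x_0,x_1)<\theta_0$, so the lemma turns the $\sigma_{K',N'}$-inequality guaranteed by $\cd_\loc^\ast(K',N)$ into the $\tau_{K'',N'}$-inequality, with the same $q$ and $\Gamma$. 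This is $\cd_\loc(K'',N)$, and since $K''<K$ is arbitrary, $\cd_\loc(K-,N)$ follows.

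To prove the comparison lemma I would pass to the normalized coefficient $\phi_\kappa(s):=\sn_\kappa(s)/s=1-\tfrac{\kappa}{6}s^2+O(s^4)$, so that $\sigma_{K,N'}^{(t)}(\theta)=t\,\phi_{K/N'}(t\theta)/\phi_{K/N'}(\theta)$ and, by $\tau_{K,N'}^{(t)}(\theta)=t^{1/N'}\sigma_{K,N'-1}^{(t)}(\theta)^{1-1/N'}$, also $\tau_{K,N'}^{(t)}(\theta)=t\bigl(\phi_{K/(N'-1)}(t\theta)/\phi_{K/(N'-1)}(\theta)\bigr)^{1-1/N'}$. Hence $\sigma_{K',N'}^{(t)}(\theta)-\tau_{K'',N'}^{(t)}(\theta)=t\,G_{N',\theta}(t\theta)$, where
\[
G_{N',\theta}(u)=\frac{\phi_{K'/N'}(u)}{\phi_{K'/N'}(\theta)}-\left(\frac{\phi_{K''/(N'-1)}(u)}{\phi_{K''/(N'-1)}(\theta)}\right)^{1-1/N'},\qquad 0\le u\le\theta.
\]
A Taylor expansion in $\theta$ shows that both $\sigma_{\kappa,N'}^{(t)}$ and $\tau_{\kappa,N'}^{(t)}$ have the same second-order expansion $t+\tfrac{\kappa}{6N'}t(1-t^2)\theta^2+O(\theta^4)$, hence $G_{N',\theta}(\theta)=0$, $G_{N',\theta}(0)=\tfrac{K'-K''}{6N'}\theta^2+O(\theta^4)>0$ for $\theta$ small, and $G_{N',\theta}''(u)=\tfrac{K''-K'}{3N'}+O(\theta^2)<0$ on $[0,\theta]$ for $\theta$ small, all three estimates being uniform in $N'\ge N$ because the constants involve only bounded quantities such as $K'/N'$ and $K''/N'$. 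Thus for $\theta\le\theta_0$ the function $G_{N',\theta}$ is concave on $[0,\theta]$ with non-negative endpoint values, hence $\ge 0$ there, which is exactly the lemma; the degenerate boundary value $N=1$ is handled separately by hand using the conventions $\tau_{K,1}^{(t)}\equiv t$ for $K\le 0$.

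The step I expect to be the real obstacle is making the comparison in the lemma uniform: one must control the $O(\theta^4)$ remainder uniformly for $t\in[0,1]$ — where the leading coefficient $t(1-t^2)$ degenerates at both endpoints — and for all $N'\ge N$, so that a single threshold $\theta_0$ serves them all simultaneously. The concavity of $G_{N',\theta}$ is what upgrades the pointwise smallness of the remainder to this uniform statement; once the lemma is in hand, the rest is the routine transfer of the entropy inequality along a single fixed coupling and geodesic, as indicated above.
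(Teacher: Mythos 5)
The paper does not prove this statement at all --- it is quoted verbatim from Bacher--Sturm and used as a black box --- so your proposal can only be measured against the argument in \cite{BS}. Your route is in fact the same one: decrease the curvature parameter slightly, prove a small-scale comparison between the coefficients $\tau_{K'',N'}^{(t)}$ and $\sigma_{K',N'}^{(t)}$ for $K''<K'<K$, and shrink the neighborhoods $M(p)$ so that all transport distances fall below the comparison threshold; the trivial direction via $\tau\ge\sigma\ge 0$ is also handled correctly. For $N>1$ your comparison lemma is true and your concavity device for $G_{N',\theta}$ (concave on $[0,\theta]$, $G_{N',\theta}(\theta)=0$, $G_{N',\theta}(0)>0$) is a clean way to absorb the degeneration of the leading coefficient $t(1-t^2)$ at the endpoints, so up to writing out the uniform Taylor remainders this part is sound.

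The genuine gap is the case $N=1$, which the statement includes, and your uniformity claim is where it hides. The relevant small parameter in $\tau_{K'',N'}^{(t)}(\theta)=t^{1/N'}\sigma_{K'',N'-1}^{(t)}(\theta)^{(N'-1)/N'}$ is $K''/(N'-1)$, not $K''/N'$ as you assert; it is bounded only when $N'-1$ is bounded away from $0$, i.e.\ when $N>1$. If $N=1$ the lemma is false for every $\theta_0>0$: fixing $\theta>0$, $t\in(0,1)$ and letting $N'\downarrow 1$ with $K''<0$, one has $\sigma_{K'',N'-1}^{(t)}(\theta)^{(N'-1)/N'}\to 1$, hence $\tau_{K'',N'}^{(t)}(\theta)\to t$, while $\sigma_{K',N'}^{(t)}(\theta)\to \sn_{K'}(t\theta)/\sn_{K'}(\theta)<t$ for $K'<0$ by convexity of $\sinh$ (a numerical check, e.g.\ $K'=-1$, $K''=-2$, $t=1/2$, $\theta=0.1$, $N'=1.0001$, already gives $\tau>\sigma$); for $K''>0$ it fails even more crudely, since $\tau_{K'',N'}^{(t)}(\theta)=+\infty$ once $\theta\ge\pi\sqrt{(N'-1)/K''}$, a threshold that tends to $0$ as $N'\downarrow1$. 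Your proposed fix, invoking the convention $\tau_{K,1}^{(t)}\equiv t$ for $K\le 0$, only treats the single value $N'=1$ and says nothing about $N'\in(1,1+\delta)$, which belongs to the range ``all $N'\ge N$'' when $N=1$; and since the entropies $S_{N'}$ differ for different $N'$, you cannot trade the $\sigma_{K',N'}$-inequality at one exponent for the $\tau_{K'',N'}$-inequality at a nearby one along the same fixed $q$ and $\Gamma$. So as written your argument proves the equivalence only for $N>1$; the borderline case $N=1$ requires a separate treatment (or the conventions under which \cite{BS} states Proposition 5.5 there), and you should either supply it or restrict the statement accordingly --- for the application in this paper that would be harmless, but it is a real hole in the proof of the theorem as stated.
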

%%%It follows from the fact that for any $K' < K$, there exists $\theta^\ast > 0$ such that $\tau_{K',N}^{(t)}(\theta) \le \sigma_{K,N}^{(t)}(\theta)$ for all $0 \le \theta \le \theta^\ast$ and all $t \in [0,1]$.

There is another notion of a lower Ricci curvature bound in metric measure spaces which is called the {\it measure contraction property}, denoted by $\mcp(K,N)$.
Since we does not use its theory to prove Theorem \ref{thm:simp vol} in this paper, omit its definition.
For the definition and theory, we refer \cite{O} and \cite{S2}.

% Let $A = (A,\mathcal F_A)$ and $B = (B, \mathcal F_B)$ be measurable spaces. 
% A Markov kernel $Q$ from $A$ to $B$ is a map $Q : A \times \mathcal F_B \to [0,1]$ such that for any $x \in A$, $Q(x,\cdot)$ is a probability measure on $\mathcal F_B$ and for any $B' \in \mathcal F_B$, the map $x \mapsto Q(x,B')$ is measurable.

% For $K, N \in \mathbb R$ with $N \ge 1$ and $t \in (0,1)$, we set 
% \[
% \varsigma_{K,N}^{(t)}(\theta) = t \sigma_{K, N-1}^{(t)}(\theta){}^{N-1} = \tau_{K,N}^{(t)}(\theta){}^N.
% \]
% For $x,y,z$ in a metric space and $t \in [0,1]$, $z$ is called a $t$-intermediate point of $x$ and $y$ if $d(x,z) = t d(x,y)$ and $d(z,y) = (1-t) d(x,y)$ holds.
% \begin{definition}[{\cite[Definition 2.1]{O}}, {\cite[Definition 5.1]{S2}}] \upshape \label{def:MCP}
% Let $K, N \in \mathbb R$ with $N \ge 1$.
% A metric measure space $(M,m)$ satisfies the {\it measure contraction property} $\mcp(K,N)$ if for any $t \in (0,1)$, there exists a Markov kernel $Q_t$ from $M \times M$ to $M$ such that for $m \otimes m$-a.e.\! $(x,y) \in M \times M$ and $Q_t(x,y,\cdot)$-a.e.\! $z \in M$, the point $z$ is a $t$-intermediate point of $x$ and $y$, and such that for $m$-a.e.\! $x \in M$ and for every measurable $B \subset M$, 
% \begin{align*}
% \int_M \varsigma_{K,N}^{(t)} (d(x,y)) Q_t(x,y,B) \, d m(y) &\le m(B), \\
% \int_M \varsigma_{K,N}^{(1-t)} (d(x,y)) Q_t(y,x,B) \, d m(y) &\le m(B).
% \end{align*}
% \end{definition}

A metric measure space $(M,m)$ is called non-branching if $M$ is a geodesic space and is non-branching in the sense that for any four points $x$, $y$, $z_1$, $z_2$ in $M$, if $y$ is a common midpoint of $x$ and $z_1$ and of $x$ and $z_2$, then $z_1 = z_2$.
It is known that a non-branching metric measure space satisfying $\cd(K,N)$ satisfies $\mcp(K,N)$. 
Recently, Cavalleti and  Sturm proved %%that the local curvature-dimension condition implies the measure contraction property for non-branching metric measure spaces.

\begin{theorem}[{\cite[Theorem 1.1]{CS}}] \label{CD_loc to MCP}
Let $(M,m)$ be a non-branching metric measure space. 
Let $K,N \in \mathbb R$ with $N \ge 1$.
If $(M,m)$ satisfies $\cd_\loc(K,N)$, then it satisfies $\mcp(K,N)$.
\end{theorem}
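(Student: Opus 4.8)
The plan is to derive $\mcp(K,N)$ from $\cd_\loc(K,N)$ by two operations, both glued together by the non-branching hypothesis: approximating the Dirac source that appears in a measure contraction by absolutely continuous measures, and a local-to-global patching of optimal transports along geodesics. First I would recall (from \cite{O}, \cite{S2}) that $\mcp(K,N)$ asks, for $m$-a.e.\ $o\in M$ and every Borel set $A$ with $0<m(A)<\infty$, for a measurable family of unit-speed geodesics joining $o$ to $m$-a.e.\ point of $A$, together with the induced time-$t$ contraction $\Phi_t\colon A\to M$ of $A$ toward $o$, such that
\[
m\big(\Phi_t(A)\big)\ \ge\ \int_A \tau_{K,N}^{(1-t)}\!\big(d(o,x)\big)^{N}\,dm(x).
\]
Since $\tau_{K,N'}^{(s)}\ge\sigma_{K,N'}^{(s)}$ I may use $\cd_\loc(K,N)$ itself, which carries the larger $\tau$-weights. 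Two gaps must be closed: $\cd_\loc$ transports only absolutely continuous measures, not $\delta_o$; and it controls only measures supported in a single small neighbourhood, whereas $A$ may be large.

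For the first gap, fix $o$ and assume temporarily that $A$ has small diameter and that $o$ and $A$ lie in one admissible neighbourhood $M(p)$. I would apply $\cd_\loc(K,N)$ to $\nu_0^\varepsilon:=m(B(o,\varepsilon))^{-1}m|_{B(o,\varepsilon)}$ and $\nu_1:=m(A)^{-1}m|_A$, obtaining an optimal dynamical plan $\Pi^\varepsilon$ and a Wasserstein geodesic $(\mu_t^\varepsilon)$ along which the R\'enyi entropy satisfies the defining inequality of $\cd_\loc(K,N)$, with coefficients $\tau_{K,N'}^{(\cdot)}$. The supports $\bigcup_t\supp\mu_t^\varepsilon$ stay in a fixed bounded set, so the plans $\Pi^\varepsilon$ are tight; a subsequential limit $\Pi$ has initial marginal $\delta_o$ and terminal marginal $\nu_1$, and (after the natural time reversal) its geodesics issue from $o$ and define $\Phi_t$. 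As $\varepsilon\to0$ the $\rho_0^{-1/N'}$-term in the inequality disappears, since $\rho_0^{-1/N'}=m(B(o,\varepsilon))^{1/N'}\to0$, so only the $\tau$-weighted term attached to $\rho_1$ survives; bounding $-S_{N'}(\mu_t^\varepsilon\,|\,m)=\int(d\mu_t^\varepsilon/dm)^{1-1/N'}\,dm\le m(\supp\mu_t^\varepsilon)^{1/N'}$ by H\"older, then letting $\varepsilon\to0$, $N'\downarrow N$, and shrinking $\diam A$ so that the H\"older step is asymptotically sharp, I would recover the displayed inequality for this small $A$. Non-branching is used to choose $\Phi_t$ measurably and to make the limit independent of the approximating data up to $m$-null sets.

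For the second gap, given an arbitrary $A$, I would partition it modulo $m$-null sets into countably many pieces $A_k$ so small that every geodesic from $o$ to a point of $A_k$ is covered by a fixed finite chain $M(p_1),\dots,M(p_\ell)$ of admissible neighbourhoods, then transport $B(o,\varepsilon)$ to $m(A_k)^{-1}m|_{A_k}$ stepwise through such a chain and concatenate the pieces. Here non-branching guarantees that the restriction of an optimal dynamical plan to a subinterval of $[0,1]$ and to a subset of geodesics is again optimal, so the concatenation is a genuine transport geodesic, and that the successive local comparison weights multiply along the chain to precisely $\tau_{K,N}^{(1-t)}(d(o,x))^{N}$, because the Jacobi-type comparison ODE behind the functions $\sn_\kappa$ has the semigroup property along a geodesic. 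Since two distinct geodesics from $o$ meet only at $o$ (non-branching once more), the contracted sets $\Phi_t(A_k)$ are $m$-essentially disjoint, so summing the local estimates and refining the partition gives
\[
m(\Phi_t(A))=\sum_k m(\Phi_t(A_k))\ \ge\ \sum_k\int_{A_k}\tau_{K,N}^{(1-t)}(d(o,\cdot))^{N}\,dm=\int_A\tau_{K,N}^{(1-t)}(d(o,\cdot))^{N}\,dm,
\]
which is $\mcp(K,N)$.

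I expect the globalization to be the main obstacle: assembling the transport pieces produced by $\cd_\loc$ on different neighbourhoods into one dynamical plan, and verifying that the local entropy and volume inequalities concatenate with the correct multiplicative comparison weights, using only non-branching --- no doubling and no a priori global geodesic convexity. This is the analogue for $\mcp$ of the Bacher--Sturm local-to-global theorem for $\cd$ (\cite{BS}), with the extra difficulty that the source measures $\nu_0^\varepsilon$ collapse to a point, so one must control the interpolating densities near the shrinking ball $B(o,\varepsilon)$ and check that the limiting transport is well defined; carrying this out is essentially the content of \cite{CS}.
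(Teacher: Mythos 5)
First, note that the paper contains no proof of this statement: it is quoted from Cavalletti--Sturm \cite{CS} and used as a black box in the chain of implications \eqref{implication}, so there is no internal argument to compare yours with; you are in effect sketching a proof of the main theorem of \cite{CS}.

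As such a sketch, your first half is the standard and essentially sound route: approximate $\delta_o$ by normalized restrictions of $m$ to $B(o,\varepsilon)$, apply $\cd_\loc(K,N)$, pass to a tight limit of dynamical plans, and discard the vanishing $\rho_0^{-1/N'}$-term; with $\rho_1=m(A)^{-1}1_A$ the surviving term does yield an $\mcp$-type estimate after partitioning $A$ into small pieces (although the measure inequality is better extracted by restricting to the pieces $A_k$ and using essential disjointness of the contracted sets than by your H\"older step, which only produces the comparison coefficient at its infimum over $A_k$; that part is repairable). The genuine gap is the local-to-global step. Your claim that the local comparison weights ``multiply along the chain to precisely $\tau_{K,N}^{(1-t)}(d(o,x))^{N}$'' by a semigroup property of the comparison ODE is exactly the point that is not routine: the coefficients $\tau_{K,N}$ do not concatenate multiplicatively under composition of transports (this defect of $\tau$ is the very reason Bacher and Sturm introduced the reduced coefficients $\sigma_{K,N}$ and $\cd^\ast$ in \cite{BS}), and when you apply $\cd_\loc$ on successive links of a chain, the resulting inequalities involve the unknown densities of the intermediate interpolating measures, not only $\rho_1$, so they cannot simply be multiplied together. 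Controlling those intermediate densities, using non-branching to obtain $m$-a.e.\ unique geodesics and measurable selections, and running the iteration that recovers the full (non-reduced) coefficient $\tau_{K,N}$ in the limit is precisely the technical content of \cite{CS}; your proposal acknowledges this but does not supply it, so as it stands the argument establishes at best a local $\mcp$-type statement, not $\mcp(K,N)$.
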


\subsection{Bishop-Gromov volume growth estimate}
Let $(M,m)$ be a metric measure space and $x \in \supp (m)$. 
We set 
\begin{align*}
v_x(r) &:= m(B(x,r)). %%\text{ and } \\
%%%s_x(r) &:= \limsup_{\delta \to 0} \frac{1}{\delta} m(B(x,r+\delta) - U(x, r)).
\end{align*}
For $K, N \in \mathbb R$ with $N > 1$, we define 
\begin{align*}
\bar v_{K,N}(r) &= \int_0^r \sn_{K/(N-1)}^{N-1}(t) \,d t. %%%\text{ and } \\
%%%\bar s_{K,N}(r) &= \frac{d}{d r} \bar v_{K, N}(r) = \sn_{K/(N-1)}^{N-1}(r).
\end{align*}

A metric measure space $(M,m)$ satisfies the {\it Bishop-Gromov volume growth estimate} $\bg(K,N)$ if for any $x \in \supp (m)$, the function
\[
{v_x(r)} /{\bar v_{K,N}(r)} 
%%\frac{v_x(r)}{\bar v_{K,N}(r)} 
%%%%\hspace{1em}\text{and}\hspace{1em} \frac{s_x(r)}{\bar s_{K,N}(r)}
\]
is nonincreasing in $r \in (0, \infty)$, (with $r \le \pi \sqrt{(N-1)/K}$ if $K > 0$).

%%%Note that if $(M,m)$ satisfies $\bg(K,N)$, then $r \mapsto v_x(r)$ is locally Lipschitz, and hence $s_x(r)$ is actually the derivative of $v_x(r)$ for almost every $r$.
Since $\bar v_{K,N}(r)$ is continuous in $K$, $\bg(K-,N)$ implies $\bg(K,N)$.
The Bishop-Gromov volume growth estimate is implied by several lower Ricci curvature bounds, for instance the measure contraction property.
\begin{theorem}[{\cite[Theorem 5.1]{O}}, {\cite[Remark 5.3]{S2}}] \label{BG CD}
If $(M,m)$ satisfies $\mcp(K,N)$, then it satisfies $\bg(K,N)$.
\end{theorem}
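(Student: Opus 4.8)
The plan is to reduce the Bishop–Gromov volume growth estimate to the measure contraction property by a direct integration argument, following Ohta and Sturm. Fix $x \in \supp(m)$; I want to compare the measures of two concentric balls $B(x,r) \subset B(x,R)$ with $r < R$ (and $R \le \pi\sqrt{(N-1)/K}$ when $K>0$). The key idea supplied by $\mcp(K,N)$ is the existence, for the Dirac mass $\delta_x$ and a normalized restriction of $m$ to an annulus or ball, of an optimal dynamical transport along which $m$ is contracted towards $x$ with a precise density distortion controlled by the coefficient $\sn_{K/(N-1)}$. Concretely, $\mcp(K,N)$ yields for each measurable $A \subset B(x,R)$ a Borel map (or transference plan) $t \mapsto \Phi_t$ sending $A$ towards $x$ such that $m$ restricted to $A$ dominates the push-forward of $\tau_{K,N}^{(t)}(\,\cdot\,)^N$-weighted mass; radially, this says that the ``volume element at distance $\rho$ along a geodesic from $x$'' behaves sub-proportionally to $\sn_{K/(N-1)}^{N-1}(\rho)$.

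First I would set up the radial disintegration: slice $B(x,R)$ by the distance function $\rho = d(x,\cdot)$, and for each direction write the conditional measure of $m$ along the geodesic ray. Second, apply $\mcp(K,N)$ with the target point $x$ to obtain, for almost every ray, that the one-dimensional conditional density $\rho \mapsto h(\rho)$ satisfies
\[
\frac{h(\rho_1)}{\sn_{K/(N-1)}^{N-1}(\rho_1)} \ \le\ \frac{h(\rho_2)}{\sn_{K/(N-1)}^{N-1}(\rho_2)}
\]
for $\rho_1 \ge \rho_2$, i.e. the density is, fibrewise, dominated by a constant times the model density. Third, integrate this fibrewise monotonicity in $\rho$ from $0$ to $r$ and from $0$ to $R$ and then over all rays: since $\bar v_{K,N}(r) = \int_0^r \sn_{K/(N-1)}^{N-1}(t)\,dt$ is precisely the integral of the model density, the fibrewise inequality integrates to
\[
\frac{v_x(R)}{\bar v_{K,N}(R)} \ \le\ \frac{v_x(r)}{\bar v_{K,N}(r)},
\]
which is the asserted nonincreasing monotonicity of $v_x(r)/\bar v_{K,N}(r)$.

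The main obstacle I expect is making the radial disintegration and the ``fibrewise'' application of $\mcp(K,N)$ rigorous: one must check that the conditional measures along rays from $x$ are well-defined for $m$-almost every ray (using that $m$ gives zero mass to the cut locus of $x$, which itself follows from $\mcp(K,N)$), and that the density-comparison coming from the definition of $\mcp(K,N)$ — which is phrased in terms of contracting a whole ball, not a single ray — genuinely descends to the one-dimensional conditionals with the exponent $N-1$ appearing correctly. Once that measurable-selection/disintegration bookkeeping is in place, the remaining computation is the elementary integration above. Since this is exactly the content of \cite[Theorem~5.1]{O} and \cite[Remark~5.3]{S2}, I would cite those for the disintegration details and present only the reduction and the final integration.
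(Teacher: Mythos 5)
Your plan is not compared against an internal argument, because the paper does not prove Theorem \ref{BG CD} at all: it is quoted verbatim from Ohta \cite[Theorem 5.1]{O} and Sturm \cite[Remark 5.3]{S2}. So the only question is whether your sketch would actually yield the statement. The final step is fine: if along each ray the conditional density $h$ makes $h(\rho)/\sn_{K/(N-1)}^{N-1}(\rho)$ nonincreasing, then each fibre satisfies the ball monotonicity (the elementary fact that $f/g$ nonincreasing implies $\int_0^r f/\int_0^r g$ nonincreasing), and summing fibres gives $v_x(r)/\bar v_{K,N}(r)$ nonincreasing. Also your identification of the coefficient is consistent, since $\tau_{K,N}^{(t)}(\theta)^N = t\,\bigl(\sn_{K/(N-1)}(t\theta)/\sn_{K/(N-1)}(\theta)\bigr)^{N-1}$, which is exactly the MCP weight. (Minor slip: the MCP inequality says the ambient measure $m$ dominates the weighted push-forward of $m|_A$ under the contraction toward $x$, not that ``$m$ restricted to $A$'' dominates it.)

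The genuine gap is your second step. The theorem, as stated and as proved in \cite{O} and \cite{S2}, holds for an arbitrary metric measure space satisfying $\mcp(K,N)$, with no non-branching hypothesis. For such a space the radial disintegration you invoke is not available from $\mcp$ alone: $\mcp$ is an inequality between measures transported as a whole, and extracting a one-dimensional conditional density along ``the'' ray from $x$ through a point presupposes an essentially unique, non-branching family of geodesics to $x$ and a nontrivial localization argument; in branching spaces this fibrewise statement is exactly what can fail to make sense. In the context of this paper the space is an Alexandrov space, hence non-branching, so your route could be completed there (with the measurable-selection and disintegration work you flag), but it does not prove the theorem as stated, and it is not the route of the cited proofs. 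Ohta and Sturm avoid disintegration entirely: they apply the $\mcp$ inequality directly to a ball or thin annulus, obtaining an integral inequality of the type $m(B(x,tR)) \ge \int_{B(x,R)} t\,\bigl(\sn_{K/(N-1)}(t\,d(x,y))/\sn_{K/(N-1)}(d(x,y))\bigr)^{N-1}\,dm(y)$ (and its annulus version), and deduce the monotonicity of $v_x/\bar v_{K,N}$ by an elementary calculus argument. Either switch to that direct integral argument, or add the non-branching hypothesis and supply the localization step explicitly rather than attributing it to the definition of $\mcp$.
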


Summarizing above facts, we can use the following implication:
%%%For each $K \in \mathbb R$, given a mathematical sentence $\varphi(K)$ about metric measure spaces, we say that $(M,m)$ satisfies $\varphi(K-)$ if it satisfies $\varphi(K')$ for every $K' < K$.
Let $K, N \in \mathbb R$ with $N \ge 1$. %%%and $K' < K$. 
For a non-branching metric measure space $(M,m)$, %% such that $\supp(m)$ is a geodesic space, implications
\begin{equation} \label{implication}
\left\{
\begin{array}{l}
%%\cd_\loc(K,N) \implies 
\cd_\loc^\ast(K,N) \implies \cd_\loc^\ast(K-,N) \iff \cd_\loc(K-,N) \\
\implies \mcp(K-,N) \implies \bg(K-,N) \implies \bg(K,N)
\end{array}
\right.
\end{equation}
holds.

\subsection{Universal covering space with lifted measure}
Let $X$ be a semi-locally simply connected space.
Then, there is a universal covering $\pi : Y \to X$. 
In addition, if $X$ is a length space, then $Y$ is also considered as a length space. %%%together with a length structure $L_Y$ given by $L_Y(\gamma) = L_X (\pi \circ \gamma)$ for any curve $\gamma$ in $Y$, where $L_X$ is the length structure of $X$.
The map $\pi$ becomes a local isometry.

In addition, we assume that $(X,m)$ is a proper metric measure space. %%and $X$ is a geodesic space and is semi-locally simply connected.
Let $\mathcal V$ be the family of all open sheets of the universal covering $\pi : Y \to X$.
We define a set function $m_Y : \mathcal V \to [0, \infty]$ by
\[
m_Y (V) = m (\pi (V)).
\]
One can naturally extend $m_Y$ to a Borel measure on $Y$.
We also write its measure as $m_Y$, and call it the {\it lift} of $m$.
% an outer measure $\bar m_Y : 2^Y \to [0,\infty]$ defined as follows.
% \begin{align*}
% \bar m_Y^\delta(S) &:= \inf \left\{\sum_{j=1}^\infty \mu_Y(V_j) : \{V_j\} \subset \mathcal V, S \subset \bigcup_j V_j \text{ and } \diam V_j \le \delta \right\} \\
%  \bar m_Y (S) &:= \lim_{\delta \to 0} \bar m_Y^\delta(S)
% \end{align*}
% for every $S \subset Y$.
% By the Caratheodory's criterion, $\bar m_Y$ is a Borel measure on $Y$ which is an extension of $m_Y$.
% We will write $m_Y$ instead of $\bar m_Y$ and call it the {\it lift} of $m$. 
Since $m$ is locally finite, so is $m_Y$.

%%For any geodesic $\Gamma : [0,1] \to \mathcal P_2(M)$ between $\Gamma(0) = \nu_0$ and $\Gamma(1) = \nu_1$ and any $0 < t < 1$, setting $A_0 = \supp(\nu_0)$ and $A_1 = \supp(\nu_1)$, $\supp(\Gamma(t))$ is contained in $A_t$ (see \cite[Lemma 2.11]{S}), where %%%$A_t$ is the $t$-intermediate set between $A_0$ and $A_1$:
% \[
% A_t = 
% \left\{
% y \in M \mid 
% \begin{array}{l}
% \exists\, \gamma : [0,1] \to M \text{ a constant speed geodesic } \\
% \text{such that } \gamma(0) \in A_0, \gamma(1) \in A_1 \text{ and } \gamma(t) = y
% \end{array}
% \right\}.
% \]
%%In particular, 
In general, for a geodesic $\Gamma : [0,1] \to \mathcal P_2(M)$, if $\Gamma(0)$ and $\Gamma(1)$ 
are supported on $U(x,r)$ for some $x \in X$ and $r > 0$, then $\Gamma(t)$ is supported on $U(x,2 r)$ for every $t \in (0,1)$ (\cite[Lemma 2.11]{S}).
Therefore, we obtain 
\begin{proposition}[{cf.\! \cite[Theorem 7.10]{BS}}] 
\label{lift} 
The local (reduced) curvature-dimension condition is inherited to the lift.
Namely, let $K, N \in \mathbb R$ with $N \ge 1$ and let $(X,m)$ and $(Y,m_Y)$ be as above.
%%%Suppose that $m$ has full support.
If $(X,m)$ satisfies $\cd_\loc(K,N)$ (resp. $\cd_\loc^\ast(K,N)$), then $(Y,m_Y)$ also satisfies $\cd_\loc(K,N)$ (resp. $\cd_\loc^\ast(K,N)$).
\end{proposition}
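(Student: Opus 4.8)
The plan is to check the local condition of Definition \ref{def:CD} at an arbitrary point of $Y$ by carrying the relevant data down to $X$ via $\pi$, invoking the hypothesis there, and then lifting the resulting displacement interpolation back to $Y$; this is available because $\pi$, being a covering map and a local isometry, identifies a small metric ball of $Y$, together with $m_Y$, with the corresponding ball of $X$ together with $m$. First I would fix $\tilde p \in Y$, set $p = \pi(\tilde p)$, and let $M(p)$ be the neighborhood supplied by $\cd_\loc^\ast(K,N)$ (resp.\ $\cd_\loc(K,N)$) for $(X,m)$. Since $\pi$ is a covering local isometry I may choose $r > 0$ so small that $\pi$ restricts to a distance-preserving bijection $\iota$ of $U(\tilde p, 2r)$ onto $U(p, 2r)$ and that $U(p,r) \subset M(p)$; by the very construction of $m_Y$ on sheets, $\iota$ carries $m_Y$, restricted to $U(\tilde p, 2r)$, onto $m$ restricted to $U(p, 2r)$. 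I then propose $U(\tilde p, r)$ as the required neighborhood of $\tilde p$.

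Now let $\nu_0, \nu_1 \in \mathcal P_\infty(Y,m_Y)$ be supported in $U(\tilde p,r)$ with $m_Y$-densities $\rho_0, \rho_1$, and put $\mu_i = \iota_\ast \nu_i = \pi_\ast \nu_i$. Then $\mu_i \in \mathcal P_\infty(X,m)$, is supported in $U(p,r) \subset M(p)$, and has $m$-density $\rho_i \circ \iota^{-1}$. Applying the hypothesis to $(X,m)$ gives an optimal coupling $q$ of $\mu_0,\mu_1$ and a geodesic $\Gamma : [0,1] \to \mathcal P_\infty(X,m)$, parametrized proportionally to arclength, from $\mu_0$ to $\mu_1$ for which the entropy inequality holds for all $N' \ge N$; by \cite[Lemma 2.11]{S} each $\Gamma(t)$ is supported in $U(p,2r)$. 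I would then lift by pushing $q$ and each $\Gamma(t)$ forward under $\iota^{-1}$, obtaining a coupling $\tilde q$ of $\nu_0,\nu_1$ and a curve $\tilde\Gamma : [0,1] \to \mathcal P_\infty(Y,m_Y)$ from $\nu_0$ to $\nu_1$. Because $\iota$ preserves distances between points of $U(\tilde p,2r)$ and all of $q$ and $\Gamma(t)$ live in $U(p,2r)$ (resp.\ its square), transport costs are transported faithfully, so $\tilde q$ is optimal and $\tilde\Gamma$ is a geodesic in $\mathcal P_\infty(Y,m_Y)$ — here one uses that every intermediate $\Gamma(t)$, not merely the endpoints, stays in the isometric region. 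Finally $S_{N'}(\tilde\Gamma(t)|m_Y) = S_{N'}(\Gamma(t)|m)$, the $m_Y$-densities of $\nu_i$ correspond to the $m$-densities of $\mu_i$, and $d(x_0,x_1) = d(\iota x_0, \iota x_1)$ for $\tilde q$-a.e.\ $(x_0,x_1)$; as the kernels $\sigma_{K,N'}^{(t)}$ and $\tau_{K,N'}^{(t)}$ are untouched, the entropy inequality for $(X,m)$ transcribes verbatim into the one required for $(Y,m_Y)$.

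I expect the lifting step to be the main obstacle. One must ensure that the intermediate measures of the Wasserstein geodesic obtained downstairs never leave a region over which $\pi$ behaves like an isometry, and this is exactly why the ambient radius must be doubled from $U(p,r)$ to $U(p,2r)$, the content of \cite[Lemma 2.11]{S}, and why $r$ has to be chosen small relative to the local-isometry and covering behaviour of $\pi$ at $\tilde p$. Granting this, the stability of optimality of $q$ and of the geodesic property of $\Gamma$ under a distance-preserving identification is routine, and the $\cd_\loc$ case is handled exactly as the $\cd_\loc^\ast$ case with $\sigma$ replaced by $\tau$; the whole argument parallels \cite[Theorem 7.10]{BS}.
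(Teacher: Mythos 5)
Your proposal is correct and follows essentially the same route as the paper: the paper's own (very brief) proof consists precisely of the observation that, by \cite[Lemma 2.11]{S}, a Wasserstein geodesic with endpoints supported in $U(x,r)$ stays supported in $U(x,2r)$, so the verification can be carried out inside a ball on which the covering projection is a measure-preserving isometry, exactly as you do. Your write-up just makes explicit the push-down/lift bookkeeping that the paper leaves implicit.
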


\subsection{Proof of Theorem \ref{thm:simp vol}}
\begin{proof}[Proof of Theorem \ref{thm:simp vol}]
Let $X$ be an $n$-dimensional compact orientable Alexandrov space without boundary.
Let $m$ be a locally finite Borel measure on $X$ with full support.
We assume that $(X,m)$ satisfies $\cd_\loc^\ast(K,N)$ for $K < 0$ and $N \ge 1$. 
By Proposition \ref{lift}, the universal covering $Y$ of $X$ with lift $m_Y$ of $m$ also satisfies $\cd_\loc^\ast(K,N)$. 
And, $Y$ is an $n$-dimensional Alexandrov space. 
Since $m$ has full support, so is $m_Y$.
By the implication \eqref{implication}, $(Y,m_Y)$ satisfies $\bg(K,N)$. 
Therefore, as mentioned in the preface of this section, the proof of original Gromov's theorem relying on the Bishop-Gromov volume comparison works in our setting (cf. \cite[\S 2]{G} \cite[Appendix]{Y simp}).
Hence, we can prove Theorem \ref{thm:simp vol}.
We recall such an argument.
For undefined terms appearing and for facts used in the following argument, we refer \cite{G} and \cite{Y simp}.

Let $\mathcal M$ (resp. $\mathcal M_+$) be the Banach space (resp. the set) %%of all finite signed Borel measures and 
of all finite singed (resp. positive) Borel measure on $Y$, equipped with the norm $\| \mu \| = \int_Y \,d |\mu| \in [0,\infty)$ for $\mu \in \mathcal M$.
Due to the general theory established in \cite[\S 2]{G} and \cite[Appendix]{Y simp}, 
if a differentiable averaging operator $S : Y \to \mathcal M_+$ exists, then for any $\alpha \in H_n(X)$, 
\begin{equation} \label{ineq:mass}
\|\alpha\|_1 \le n!\, (\mathcal L [S])^n\, \mass (\alpha)
\end{equation}
holds.
Here, the value $\mathcal L [S]$ %%from above which 
is defined as follows.
For $y \in Y$, 
\[
\mathcal L S_y = \limsup_{z \to y} \frac{\|S(z)-S(y)\|}{d(z,y)} \text{ and }
\mathcal L [S] = \sup_{y \in Y} \frac{\mathcal L S_y}{\|S(y)\|}.
\]

We recall a concrete construction of a differentiable averaging operator.
For $R > 0$ and $y \in Y$, we set $S_R(y) \in \mathcal M_+$ as 
\[
S_R(y) = 1_{B(y,R)} \cdot m_Y.
\]
Here, $1_A$ is the characterizing function of $A \subset Y$.
For every $\epsilon > 0$, we define $S_{R,\epsilon} : Y \to \mathcal M_+$ by
\[
S_{R, \epsilon} (y) = \frac{1}{\epsilon} \int_{R-\epsilon}^R S_{R'}(y) \, d R'.
\]
Its norm is $\|S_{R,\epsilon}(y)\| = \frac{1}{\epsilon} \int_{R-\epsilon}^R v_y(R') \, d R'$ and is not less than $v_y(R-\epsilon)$.
Here, $v_z(r) = m_Y(B(z,r))$ for $z \in Y$ and $r > 0$.
Given a Lipschitz function $\psi = \psi_{R,\epsilon} : [0,\infty) \to [0,1]$ defined as 
\[
\psi (t) = \left\{ 
\begin{array}{ll}
1 & \text{if } t \le R - \epsilon \\
(R - t) / \epsilon & \text{if } t \in [R- \epsilon, R] \\
0 & \text{if }  t \ge  R,
\end{array}
\right.
\]
we can write $S_{R,\epsilon}(y) = \psi(d(y,\cdot))\, m_Y$ for any $y \in Y$.
% \[
% S_{R,\epsilon}(y)(A) = \int_A \psi(d(y,z)) \, d m_Y(z)
% \]
% for any Borel set $A \subset Y$.

We can check $S_{R,\epsilon}$ is a differentiable averaging operator as follows.
Since $m_Y$ is $\pi_1(X)$-invariant, the maps $S_R$ and $S_{R,\epsilon}$ are $\pi_1(X)$-equivariant.
Since $m$ is absolutely continuous in $\mathcal H_X^n$, so is $m_Y$ in $\mathcal H_Y^n$.
One can check that $S_{R,\epsilon}$ is differentiable at $m_Y$-almost everywhere with respect to the differentiable structure of $Y$, where, the differentiable structure on Alexandrov spaces are defined by Otsu and Shioya \cite{OS}.
Indeed, the differential $D_y S_{R,\epsilon}(\gamma^+(0))$ %%, which is a signed measure, 
of $S_{R,\epsilon}$ at $y$ along a geodesic $\gamma$ starting from $y = \gamma(0)$ is calculated as 
\begin{align*}
\left( D_y S_{R,\epsilon} (\gamma^+(0))  \right)(A) 
%%&= \int_A \psi'(d(y,z)) (-\cos \angle(\Uparrow_y^z, \gamma^+(0))) \, d m_Y(z) \\
&= \frac{1}{\epsilon} \int_{A \cap A(y; R-\epsilon, R)} \cos \angle (\Uparrow_y^z, \gamma^+(0)) \,d m_Y(z)
\end{align*}
for any Borel set $A \subset Y$, 
where $A(z;r,r')$ is the annulus around $z \in Y$ of radii between $r$ and $r'$ for $r \le r'$.
%%By \cite{OS}...

To estimate $\mathcal L[S_{R,\epsilon}]$, we use the Bishop-Gromov volume growth estimate as follows. 
We obtain
% \begin{align*}
% \| S_{R,\epsilon}(z) - S_{R,\epsilon}(y) \| 
% &= \frac{1}{\epsilon} \int_{R-\epsilon}^R m_Y(B(z,R') \triangle B(y,R')) \, d R' \\
% &\le \frac{1}{\epsilon} \int_{R-\epsilon}^R 
%%m_Y(B(y, R' + |y z|) - B(y, R')) 
% v_y(R' + |y z|) - v_y(R')
% \, d R' .
% \end{align*}
\[
\mathcal L(S_{R,\epsilon})_y = \sup_{\xi \in \Sigma_y} \|D_y S_{R,\epsilon}(\xi)\| \le 
\frac{m_Y(A(y; R-\epsilon, R))}{\epsilon}
\]
%%Dividing this inequality by $d(z,y)$ and taking limit-sup as tending $d(z,y) \to 0$, we have 
%%%Here, $B \triangle B' = (B - B') \cup (B' - B)$ for $B, B' \subset Y$.
It follows 
% \[
% \mathcal L (S_{R,\epsilon})_y 
%\le \frac{1}{\epsilon} \int_{R-\epsilon}^R s_y(R') \, d R' .
%= \frac{1}{\epsilon} ( v_{y}(R) - v_{y}(R-\epsilon) )
%%%\le \limsup_{t \to 0} \frac{1}{\epsilon} \int_{R-\epsilon}^R \frac{v_y(R'+t) - v_y(R')}{t} \, d R'
% \le \frac{1}{\epsilon} \int_{R-\epsilon}^R s_y(R') \, d R' 
% = \frac{1}{\epsilon} (v_y(R) - v_y(R-\epsilon)).
% \]
from %%%the Bishop-Gromov volume growth estimate 
$\bg(K,N)$, 
\begin{align*}
\frac{\mathcal L(S_{R,\epsilon})_y}{\|S_{R,\epsilon}(y)\|} 
% &\le \frac{s_y(R)}{v_y(R)}
% = \lim_{t \to 0} \int_R^{R+t} \frac{v_y(R'+t) - v_y(R')}{v_y(R)} \, d R' \\ 
% &\le \lim_{t \to 0} \int_R^{R+t} \frac{v_{K,N}(R'+t) - v_{K,N}(R)}{v_{K,N}(R)}
\le \frac{v_y(R) - v_y(R-\epsilon)}{\epsilon \cdot v_y(R-\epsilon)} 
\le C_{K,N}({R,\epsilon}).
\end{align*}
Here, setting $\bar v(R') = \bar v_{K, N}(R') = \int_0^{R'} \sn_{K/(N-1)}^{N-1}(t) \, d t$, 
\[
C_{K,N}({R,\epsilon}) := \frac{\bar v (R) - \bar v(R-\epsilon)}{\epsilon \cdot \bar v(R-\epsilon)}.
\]

% Therefore, for any $\alpha \in H_n(X)$,
% \[
% \|\alpha \|_1 \le n!\, C_{R,\epsilon}^n\, \mass(\alpha)
% \]
% holds.
Since %%$\lim_{R \to \infty} \lim_{\epsilon \to 0} C_{R, \epsilon} = \sqrt{-N K}$ and 
$\mass([X]) = \mathcal H^n(X)$ (\cite[Theorem 0.1]{Y simp}), by using \eqref{ineq:mass} and by tending $\epsilon \to 0$ and $R \to \infty$, we obtain 
\[
\|X \| \le n! \sqrt{-K (N-1)}^{\,n} \mathcal H^n(X).
\]
It completes the proof of Theorem \ref{thm:simp vol}.
\end{proof}

\begin{remark} \upshape
Due to Petrunin \cite{Pt ALVS} and Zhang and Zhu \cite{ZZ}, it is known that for $n$-dimensional Alexandrov space $X$ of curvature $\ge \kappa$, the metric measure space $(X,\mathcal H^n)$ satisfies the curvature-dimension condition $\cd((n-1)\kappa, n)$.
Therefore, Corollary \ref{cor:simp vol} is implied by Theorem \ref{thm:simp vol} via \cite{Pt ALVS} and \cite{ZZ}.

If there exists a compact orientable $n$-dimensional Alexandrov space $X$ without boundary of curvature $\ge \kappa$ with $\kappa < 0$ which has nonnegative Ricci curvature with respect to some reference measure $m$ so that $m \ll \mathcal H^n$ and $\supp(m) = X$, then Theorem \ref{thm:simp vol} yields $\|X\| = 0$.
\end{remark}

\noindent
{\bf Acknowledgments}.
The first author is supported by Research Fellowships of the Japan
Society for the Promotion of Science for Young Scientists.

\end{document}